\newcommand{\scal}[2]{\langle #1,#2\rangle}
\newcommand{\rr}[1]{\mathbf R^{#1}}
\newcommand{\zz}[1]{\mathbf Z^{#1}}
\newcommand{\nn}[1]{\mathbf N^{#1}}
\newcommand{\nm}[2]{\Vert #1\Vert _{#2}}
\newcommand{\Nm}[2]{\left \Vert #1\right \Vert _{#2}}
\newcommand{\sets}[2]{\{ \, #1\, ;\, #2\, \} }
\newcommand{\fy}{\varphi}
\newcommand{\cdo}{\, \cdot \, }
\newcommand{\eabs}[1]{\langle #1\rangle}
\newcommand{\vrum}{\vspace{0.1cm}}
\newcommand{\maclD}{\mathcal D}
\newcommand{\maclE}{\mathcal E}
\newcommand{\maclS}{\mathcal S}
\newcommand{\mascB}{\mathscr B}
\newcommand{\mascD}{\mathscr D}
\newcommand{\mascF}{\mathscr F}
\newcommand{\mascP}{\mathscr P}
\newcommand{\mascS}{\mathscr S}
\newcommand{\mabfp}{{\boldsymbol p}}
\newcommand{\mabfq}{\boldsymbol q}
\newcommand{\mabfr}{\boldsymbol r}
\newcommand{\mabfz}{\boldsymbol z}
\renewcommand{\projlim}[1]{\underset{#1}{\operatorname{proj \, lim\, }}}
\newcommand{\sfW}{\mathsf{W}}
\numberwithin{equation}{section}          
\newtheorem{thm}{Theorem}
\numberwithin{thm}{section}
\newcommand{\rubrik}{}
\newtheorem{prop}[thm]{Proposition}
\newtheorem{cor}[thm]{Corollary}
\newtheorem{lemma}[thm]{Lemma}
\theoremstyle{definition}
\newtheorem{defn}[thm]{Definition}
\theoremstyle{remark}
\newtheorem{rem}[thm]{Remark}              
\title{The Zak transform on Gelfand-Shilov
and modulation spaces with applications to operator theory}
\author{Joachim Toft}
\address{Department of Mathematics,
Linn{\ae}us University, V{\"a}xj{\"o}, Sweden}
\email{joachim.toft@lnu.se}
\keywords{Zak transform, quasi-periodic, Wiener spaces, modulation spaces,
Gelfand-Shilov spaces, quasi-Banach spaces, characterizations}
\subjclass{Primary: 42C20, 43A32, 42B35, 46E10, 
\quad Secondary: 46A16, 35A22, 37A05, 46E35}
\begin{document}

\par

\begin{abstract}
We characterize Gelfand-Shilov spaces, their
distribution spaces and modulation spaces in terms of
estimates of their Zak transforms.
We use these result for general investigations of quasi-periodic
functions and distributions. We also establish necessary and
sufficient conditions for linear operators in order for these
operators should be conjugations by the Zak transform.
\end{abstract}

\maketitle

\section{Introduction}\label{sec0}

\par

In the paper we characterise
Gelfand-Shilov spaces of functions and distributions,
modulation spaces and Gevrey classes in background of
mapping properties of the Zak transforms. We apply these
results to deduce duality properties of spaces of
quasi-periodic functions and distributions and for investigating
transitions of linear operators under the Zak transform.

\par

The Zak transforms are unpredictable and exciting
in several ways. They 
appear in natural ways when dealing with Gabor frame 
operators in
the cases of "critical sampling", where the Gabor theory
cease to work properly. This ought to be the reason why the
transform possess several exciting and almost magical
properties, useful in Gabor theory.

\par

For example, in critical sampling cases, the Zak
transform $Z$, adapted to the sampling parameters,
takes the Gabor frame operator $S_{\phi ,\psi}$
into the multiplication operator
$$
F\mapsto c\cdot \overline {Z\phi}\cdot Z\psi \cdot F
$$
for some constant $c$ which depends on the sampling
parameters. (See \cite{Ho1,ToNa} and Section \ref{sec1} for
notations.) We remark that this property is heavily
used when showing that Gabor atoms and their
canonical dual atoms often belong to the same function
classes. (See \cite{Bol,BoJa,Gc2}.)

\par

An other example concerns the fact that if $Zf$
is continuous, then it has zeros. This property
is important when deducing various kinds of
Balian-Low theorems, which are essential when
finding limitations for bases and Gabor frames
in Gabor analysis (see Theorem 8.4.1 and its
consequences in\cite{Gc2}).

\par

Before entering the Gabor theory, Zak transforms were
first introduced and used in a problem in differential
equation by Gelfand in \cite{Gel}. Subsequently, the transforms
were applied in various contexts, especially 
in solid state physics by Zak in \cite{Za1} and in
differential equations by Brezin in \cite{Bre}.

\par

In these considerations it is essential to understand
various kinds of mapping properties of the Zak transforms.
The transforms take suitable functions, defined on the
configuration space $\rr d$ into quasi-periodic functions,
defined on the phase space $\rr {2d}$. Hence, in similar ways as
for periodic functions, Zak transformed functions are
completely described by their behaviour on suitable
rectangles.

\par

For example, the (standard) Zak transform is given by
$$
(Z_1 f)(x,\xi ) \equiv \sum _{j\in \zz d}
f(x-j)e^{i\scal j\xi},
$$
when $f$ is a suitable function or distribution (see \eqref{Eq:DefZak} for the
general definition of the Zak transform). By the definition
it follows that if $F=Z_1f$ and $Q_{d,r}$ is the cube $[0,r]^d$,
then $F$ is quasi-periodic (with respect
to $Q_{d,1}\times Q_{d,2\pi}$). That is,
$$
F(x+k,\xi ) = e^{i\scal k\xi}F(x,\xi )
\quad \text{and}\quad
F(x,\xi +2\pi \kappa) = F(x,\xi ),\quad k,\kappa \in \zz d.
$$
It follows from these equalities that $F$
is completely reconstructable
from its data on $Q_{d,1}\times Q_{d,2\pi}$.

\par

It is well-known that $Z_1$ is bijective from $L^2(\rr d)$ to
the set of quasi-periodic elements in $L^2(Q_{d,1}
\times Q_{d,2\pi})$ and that
\begin{equation}\label{Eq:ZakonL2}
\nm {Z_1 f}{L^2(Q_{d,1}\times Q_{d,2\pi})} =
(2\pi) ^{\frac d2}\nm f{L^2},
\qquad f\in L^2(\rr d).
\end{equation}
(Cf. e.{\,}g. \cite[Theorem 8.2.3]{Gc2}.) Consequently,
$L^2(\rr d)$ can be characterized in a convenient
way by its image under the Zak transform.

\par

An other space that can be characterized by related
mapping properties concerns the Schwartz space
$\mascS (\rr d)$. In fact, it is proved in
\cite{Jan} by Janssen that $Z_1$ is continuous and
bijective from $\mascS (\rr d)$ to the set of
quasi-periodic elements in $C^\infty (\rr {2d})$.

\par

In \cite{Ti,TiHe}, Heil and Tinaztepe deduce
some important mapping properties for the Zak transform
on modulation spaces, and apply these
results to deduce Balian-Low properties in the framework
of such spaces.
These mapping
properties on modulation spaces seems not to be (complete)
characterizations, because of absence of bijectivity.
In fact, apart from the spaces
$L^2(\rr d)$ and $\mascS (\rr d)$,
the whole theory seems to lack characterizations of
essential function and distribution spaces
via the Zak transform as remarked in
Subsection 8.2 (f) in \cite{Gc2}.

\par

In Section \ref{sec2} we make this part more complete
and furnish the theory with various kinds of
characterizations. Especially we characterize
modulation and Lebesgue spaces by suitable Lebesgue
estimates of short-time Fourier transforms of the Zak 
transforms of the involved functions. 
We also
characterize the dual $\mascS '(\rr d)$ of
$\mascS (\rr d)$, the (standard) Gelfand-Shilov
spaces and their distribution spaces by their
images under the Zak transform.

\par

For example
we prove that $Z_1$ is continuous and bijective
from $\mascS '(\rr d)$ to the set of all
quasi-periodic distributions on
$Q_{d,1}\times Q_{d,2\pi}$. (See Theorem
\ref{Thm:ZakClassical}.) In Theorems \ref{Thm:ZakTestFunctions}
and \ref{Thm:ZakDist} we deduce similar characterizations for
Gelfand-Shilov spaces and their distribution
spaces. As a consequence of
Theorem \ref{Thm:ZakTestFunctions} we have
that the Zak transform $Z_1$ maps
the Gelfand-Shilov space $\maclS _s^\sigma (\rr d)$
bijectively to
$\maclE _{Z,1}^{\sigma ,s}(\rr {2d})$,
the set of all quasi-periodic functions (quasi-periodic distributions) on
$Q_{d,1}\times Q_{d,2\pi}$
in the Gevrey class $\maclE ^{\sigma ,s}(\rr {2d})$.
In the same way it follows from
Theorem \ref{Thm:ZakDist}
that the Zak transform $Z_1$ maps
the Gelfand-Shilov distribution space $(\maclS _s^\sigma )'(\rr d)$
bijectively to
$(\maclE _{Z,1}^{\sigma ,s})'(\rr {2d})$,
the set of all quasi-periodic distributions on
$Q_{d,1}\times Q_{d,2\pi}$ in $(\maclS _{s,\sigma}^{\sigma ,s})'(\rr {2d})$.
As a consequence,
if $s+\sigma <1$, then there are no non-trivial quasi-periodic functions
in $\maclE ^{\sigma ,s}(\rr {2d})$ (cf. Corollary \ref{Cor:ZakTestFunctions}).

\par

An other consequence of our 
results is that $Z_1$ maps the modulation space $M^p(\rr 
d)$ continuously and bijectively to the set of all
elements in $W^{\infty ,p}(\rr {2d})$ which are 
quasi-periodic on $Q_{d,1}\times Q_{d,2\pi}$.
Furthermore,
\begin{equation}\label{Eq:ZakModIntrRel}
\nm {Z_1f}{W^{\infty ,p}}\asymp \nm f{M^p}
\end{equation}
(see Theorem \ref{Thm:ZakModulation}
and Corollary \ref{Cor:ZakModulation}).

\par

We also use some recent results in
\cite{Toft26} on Wiener estimates to deduce
different versions of the latter characterization. For example we show that
\eqref{Eq:ZakModIntrRel} in combination with results in \cite[Section 2]{Toft26}
give
\begin{equation}\label{Eq:EquivZakMod}
f\in M^p(\rr d)\quad \Leftrightarrow \quad
V_\Phi (Z_1 f) \in L^p(Q_{d,1}\times Q_{d,2\pi}\times \rr d\times \rr d).
\end{equation}
If $p=2$, then an application of
Parseval's formula implies that \eqref{Eq:EquivZakMod} is the same as
$$
f\in M^2(\rr d)\quad \Leftrightarrow \quad
Z_1 f \in L^2(Q_{d,1}\times Q_{d,2\pi}),
$$
which is a slightly weaker form of \eqref{Eq:ZakonL2}.

\par

In Section \ref{sec3} we apply the mapping results of the Zak transform to
deduce duality properties for spaces of quasi-periodic functions and
distributions. For example, if $p\in [1,\infty )$ and $\frac 1p+\frac 1{p'}=1$, then
we prove that the dual of quasi-periodic elements in $\maclE ^{\sigma ,s}$ and
in $W^{\infty ,p}$ can be identified with the set of quasi-periodic elements in the
Gelfand-Shilov distribution space $(\maclS _{s,\sigma}^{\sigma ,s})'$,
respective in $W^{\infty ,p'}$. An essential part of these investigations
concern characterizations of quasi-periodic elements in terms of estimates
of their short-time Fourier transforms, given in the end of Section \ref{sec2} and
the beginning of Section \ref{sec3}.

\par

Finally, in Section \ref{sec4} we show how linear operators, $T$ are transformed
under conjugation of the Zak transform, $T_Z=Z_1\circ T \circ Z_1^{-1}$.
It follows from our investigations
 that the map $T\mapsto T_Z$ is a bijection between
the set of all continuous linear mappings
$$
T\, :\, \maclS _s^\sigma \to (\maclS _s^\sigma )'
$$
the set of all continuous linear mappings
$$
T_Z\, :\, \maclE ^{\sigma ,s}_{Z,1} \to (\maclE ^{\sigma ,s}_{Z,1})'
$$
(cf. Theorems \ref{Thm:ZakTransOpChar} and \ref{Thm:ZakTransOpChar2}).
At the same time we prove that a map $T_Z$ maps quasi-periodic functions or distributions
into quasi-periodic functions or distributions, if and only if $T_Z$ commutes
with each operator $U_{y,\eta}$, $y,\eta \in \rr d$, where
$$
(U_{y,\eta }F)(x,\xi )= e^{-i\scal y{\xi +\eta}}F(x+y,\xi +\eta ),
\qquad y,\eta \in \rr d.
$$

\par

\section*{Acknowledgement}

\par

I am very grateful to Professor Hans Feichtinger for reading parts of the paper
and giving valuable comments, leading to improvements of the content
and the style.

\par

\section{Preliminaries}\label{sec1}

\par

In this section we recall some basic facts. We start by discussing
Gelfand-Shilov spaces and their properties. Thereafter we recall
some properties of modulation spaces and discuss different aspects
of periodic distributions

\par

\subsection{Gelfand-Shilov spaces and Gevrey classes}\label{subsec1.1}
Let $0<s,\sigma \in \mathbf R$ be fixed. Then the Gelfand-Shilov
space $\mathcal S_{s}^\sigma (\rr d)$
($\Sigma _{s}^\sigma (\rr d)$) of Roumieu type (Beurling type) with parameters $s$
and $\sigma$ consists of all $f\in C^\infty (\rr d)$ such that
\begin{equation}\label{gfseminorm}
\nm f{\mathcal S_{s,h}^\sigma }\equiv \sup \frac {|x^\alpha \partial ^\beta
f(x)|}{h^{|\alpha  + \beta |}\alpha !^s \, \beta !^\sigma}
\end{equation}
is finite for some $h>0$ (for every $h>0$). Here the supremum should be taken
over all $\alpha ,\beta \in \mathbf N^d$ and $x\in \rr d$. We equip
$\mathcal S_{s}^\sigma (\rr d)$ ($\Sigma _{s}^\sigma (\rr d)$) by the canonical inductive limit
topology (projective limit topology) with respect to $h>0$, induced by
the semi-norms in \eqref{gfseminorm}.

\par

\medspace

The \emph{Gelfand-Shilov distribution spaces} $(\mathcal S_s^{\sigma})'(\rr d)$
and $(\Sigma _s^\sigma)'(\rr d)$ are the dual spaces of $\mathcal S_s^{\sigma}(\rr d)$
and $\Sigma _s^\sigma (\rr d)$, respectively.  As for the Gelfand-Shilov spaces there 
is a canonical projective limit topology (inductive limit topology) for $(\maclS _s^{\sigma})'(\rr d)$ 
($(\Sigma _s^\sigma)'(\rr d)$).(Cf. \cite{GS, Pil1, Pil3}.)
For conveniency we set
$$
\maclS _s=\maclS _s^s,\quad \maclS _s'=(\maclS _s^s)',\quad
\Sigma _s=\Sigma _s^s
\quad \text{and}\quad
\Sigma _s'=(\Sigma _s^s)'.
$$

\par

From now on we let $\mathscr F$ be the Fourier transform which
takes the form
$$
(\mathscr Ff)(\xi )= \widehat f(\xi ) \equiv (2\pi )^{-\frac d2}\int _{\rr
{d}} f(x)e^{-i\scal  x\xi }\, dx
$$
when $f\in L^1(\rr d)$. Here $\scal \cdo \cdo$ denotes the usual
scalar product on $\rr d$. The map $\mathscr F$ extends 
uniquely to homeomorphisms on $\mathscr S'(\rr d)$,
from $(\mathcal S_s^\sigma )'(\rr d)$ to $(\mathcal S_\sigma ^s)'(\rr d)$ and
from $(\Sigma _s^\sigma)'(\rr d)$ to $(\Sigma _\sigma ^s)'(\rr d)$. Furthermore,
$\mascF$ restricts to
homeomorphisms on $\mathscr S(\rr d)$, from
$\mathcal S_s^\sigma (\rr d)$ to $\mathcal S_\sigma ^s(\rr d)$ and
from $\Sigma _s^\sigma (\rr d)$ to $\Sigma _\sigma ^s(\rr d)$,
and to a unitary operator on $L^2(\rr d)$. 

\medspace

Next we consider a more general class of Gelfand-Shilov spaces and their distribution
spaces. Let $0\le s_1,s_2,\sigma _1,\sigma _2\in \mathbf R$ be fixed. Then the Gelfand-Shilov
space $\mathcal S_{s_1,s_2}^{\sigma _1,\sigma _2}(\rr {d_1+d_2})$
($\Sigma _{s_1,s_2}^{\sigma _1,\sigma _2}(\rr {d_1+d_2})$) of Roumieu type (Beurling type)
with parameters $s_1$, $s_2$, $\sigma _1$ and $\sigma _2$ consists of all $f\in C^\infty
(\rr {d_1+d_2})$ such that
\begin{equation}\label{gfseminorm2}
\nm f{\mathcal S _{s_1,s_2,h}^{\sigma _1,\sigma _2}}
\equiv \sup \frac {|x_1^{\alpha _1}x_2^{\alpha _2}
\partial _{x_1}^{\beta _1}\partial _{x_2}^{\beta _2}
f(x_1,x_2)|}{h^{|\alpha _1+\alpha _2 + \beta _1+\beta _2|}\alpha _1!^{s_1}\,
\alpha _2!^{s_2}\, \beta _1!^{\sigma _1}\, \beta _2!^{\sigma _2}}
\end{equation}
is finite for some $h>0$ (for every $h>0$). Here the supremum should be taken
over all $\alpha _j,\beta _j\in \mathbf N^{d_j}$ and $x_j\in \rr {d_j}$, $j=1,2$. We
equip $\mathcal S_{s_1,s_2}^{\sigma _1,\sigma _2}(\rr {d_1+d_2})$
($\Sigma _{s_1,s_2}^{\sigma _1,\sigma _2}(\rr {d_1+d_2})$) by the canonical inductive limit
topology (projective limit topology) with respect to $h>0$, induced by
the semi-norms in \eqref{gfseminorm2}.

\par

The space $\Sigma _{s_1,s_2}^{\sigma _1,\sigma _2}(\rr {d_1+d_2})$ is a Fr{\'e}chet space
when the topology is induced by the seminorms
$\nm \cdo{\mathcal S_{s_1,s_2,h}^{\sigma _1,\sigma _2}}$, $h>0$.

\medspace

The \emph{Gelfand-Shilov distribution spaces} $(\maclS _{s_1,s_2}
^{\sigma _1,\sigma _2})'(\rr {d_1+d_2})$
and $(\Sigma _{s_1,s_2}^{\sigma _1,\sigma _2})'(\rr {d_1+d_2})$ are the dual spaces of
$\maclS _{s_1,s_2}^{\sigma _1,\sigma _2}(\rr {d_1+d_2})$
and $\Sigma _{s_1,s_2}^{\sigma _1,\sigma _2}(\rr {d_1+d_2})$, respectively. Evidently,
$\maclS _{s_1,s_2}^{\sigma _1,\sigma _2}(\rr {d_1+d_2})$,
$\Sigma _{s_1,s_2}^{\sigma _1,\sigma _2}(\rr {d_1+d_2})$
and their duals possess similar topological properties as
$\maclS _{s}^{\sigma }(\rr {d})$, $\Sigma _{s}^{\sigma }(\rr {d})$ and their duals.
The space $\Sigma _{s_{1},s_{2}}^{\sigma _{1},\sigma _{2}}(\rr {d_1+d_2})$
is a Fr{\'e}chet space
with seminorms $\nm \cdo{\maclS _{s_{1},s_{2},h}^{\sigma _{1},\sigma _{2}}}$, $h>0$. 
Moreover,
$\Sigma _{s_{1},s_{2}}^{\sigma _{1},\sigma _{2}}(\rr {d_1+d_2})\neq \{ 0\}$, if and only if 
$s_j+\sigma _j\ge 1$ and
$(s_j,\sigma _j)\neq (\frac 12,\frac 12)$, $j=1,2$, and
$\maclS _{s_{1},s_{2}}^{\sigma _{1},\sigma _{2}}
(\rr {d_1+d_2})\neq \{ 0\}$, if and only if $s_j+\sigma _j\ge 1$, $j=1,2$.
By $(j+k)!\le 2^{j+k}j!k!$ when $j,k\ge 0$ are integers we get
$\maclS _{s,s}^{\sigma ,\sigma } = \maclS _s^\sigma$ and $\Sigma _{s,s}^{\sigma ,\sigma}
= \Sigma _s^\sigma$.

\par

For any $s_j,\sigma _j,s_{0,j},\sigma _{0,j}>0$ such that $s_j>s_{0,j}$,
$\sigma _j>\sigma _{0,j}$ and $s_{0,j}+\sigma _{0,j}\ge 1$, $j=1,2$,
we have
\begin{multline}\label{GSembeddings}
\maclS _{s_{0,1},s_{0,2}}^{\sigma _{0,1},\sigma _{0,2}}(\rr {d_1+d_2})
\hookrightarrow
\Sigma _{s_{1},s_{2}}^{\sigma _{1},\sigma _{2}}(\rr {d_1+d_2})
\hookrightarrow 
\maclS _{s_{1},s_{2}}^{\sigma _{1},\sigma _{2}}(\rr {d_1+d_2})
\\[1ex]
\hookrightarrow 
\mascS (\rr {d_1+d_2})
\hookrightarrow 
\mascS '(\rr {d_1+d_2})
\hookrightarrow
(\maclS _{s_{1},s_{2}}^{\sigma _{1},\sigma _{2}})' (\rr {d_1+d_2})
\\[1ex]
\hookrightarrow
(\Sigma _{s_{1},s_{2}}^{\sigma _{1},\sigma _{2}})'(\rr {d_1+d_2})
\hookrightarrow
(\maclS _{s_{0,1},s_{0,2}}^{\sigma _{0,1},\sigma _{0,2}})'(\rr {d_1+d_2}),
\end{multline}
with dense embeddings.
Here and in what follows we use the notation $A\hookrightarrow B$ 
when the topological spaces $A$ and $B$ satisfy $A\subseteq B$ with
continuous embeddings.

\par

Let $\mascF _jf$ denote the partial Fourier transform of
$f(x_1,x_2)\in \mascS (\rr {d_1+d_2})$
with respect to $x_j$, $j=1,2$.
Then $\mascF _1$ and $\mascF _2$ extend uniquely to homeomorphisms
\begin{alignat*}{4}
\mascF _1 & \ :\  &   &(\maclS _{s_1,s_2}^{\sigma _1,\sigma _2})'(\rr {d_1+d_2})
&   &\ \to \ &
&(\maclS _{\sigma _1,s_2}^{s_1,\sigma _2})'(\rr {d_1+d_2})
\\[1ex]
\mascF _2 & \ :\  &   &(\maclS _{s_1,s_2}^{\sigma _1,\sigma _2})'(\rr {d_1+d_2})
&   &\ \to \ &
&(\maclS _{s_1,\sigma _2}^{\sigma _1,s_2})'(\rr {d_1+d_2}),
\intertext{respectively, and restricts to homeomorphisms}
\mascF _1 & \ :\  &   &\maclS _{s_1,s_2}^{\sigma _1,\sigma _2}(\rr {d_1+d_2})
&   &\ \to \ &
&\maclS _{\sigma _1,s_2}^{s_1,\sigma _2}(\rr {d_1+d_2})
\\[1ex]
\mascF _2 & \ :\  &   &\maclS _{s_1,s_2}^
{\sigma _1,\sigma _2}(\rr {d_1+d_2})
&   &\ \to \ &
&\maclS _{s_1,\sigma _2}^{\sigma _1,s_2}(\rr {d_1+d_2}),
\end{alignat*}
The same holds true after each Gelfand-Shilov function or 
distribution space
of Roumieu type have been replaced by corresponding Beurling 
type space.

\par

Gelfand-Shilov spaces can in convenient
ways be characterized in terms of estimates of the
involved functions and their Fourier
transforms. More precisely, in \cite{ChuChuKim, Eij}
it is proved that
if $f\in \mascS '(\rr d)$ and $s,\sigma >0$, then
$f\in \maclS _s^\sigma (\rr d)$
($f\in \Sigma _s^\sigma (\rr d)$), if and only if
\begin{equation}\label{Eq:GSFtransfChar}
|f(x)|\lesssim e^{-r|x|^{\frac 1s}}
\quad \text{and}\quad
|\widehat f(\xi )|\lesssim e^{-r|\xi |^{\frac 1\sigma }},
\end{equation}
for some $r>0$ (for every $r>0$).
Here $f(\theta )\lesssim g(\theta )$ means that
$f(\theta )\le cg(\theta)$ for some
constant $c>0$ which is independent of $\theta$ in the
domain of $f$ and $g$.
We also set $f(\theta )\asymp g(\theta )$ when
$f(\theta )\lesssim g(\theta )$
and $g(\theta )\lesssim f(\theta )$. More generally,
it follows from \cite{ChuChuKim}
that if $f\in \mascS '(\rr {d_1+d_2})$ and
$s_1,s_2,\sigma _1,\sigma _2>0$, then
$f\in \maclS _{s_1,s_2}^{\sigma _1,\sigma _2}(\rr {d_1+d_2})$
($f\in \Sigma _{s_1,s_2}^{\sigma _1,\sigma _2}
(\rr {d_1+d_2})$), if and only if
\begin{equation}\tag*{(\ref{Eq:GSFtransfChar})$'$}
|f(x_1,x_2)|
\lesssim
e^{-r(|x_1|^{\frac 1{s_1}} + |x_2|^{\frac 1{s_2}})}
\quad \text{and}\quad
|\widehat f(\xi _1,\xi _2)|
\lesssim
e^{-r(|\xi _1|^{\frac 1{\sigma _1}} + |\xi _2|^{\frac 1{\sigma _2}})},
\end{equation}
for some $r>0$ (for every $r>0$).

\par

Gelfand-Shilov spaces and their distribution spaces can also
be characterized by estimates of short-time Fourier
transforms, (see e.{\,}g. \cite{GZ,Toft18}).
More precisely, let $\phi \in \maclS _s (\rr d)$ be fixed. Then the \emph{short-time
Fourier transform} $V_\phi f$ of $f\in \maclS _s '
(\rr d)$ with respect to the \emph{window function} $\phi$ is
the Gelfand-Shilov distribution on $\rr {2d}$, defined by
$$
V_\phi f(x,\xi )  =
\mascF (f \, \overline {\phi (\cdo -x)})(\xi ).
$$
If $f ,\phi \in \maclS _s (\rr d)$, then it follows that
$$
V_\phi f(x,\xi ) = (2\pi )^{-\frac d2}\int f(y)\overline {\phi
(y-x)}e^{-i\scal y\xi}\, dy .
$$

\par

By \cite[Theorem 2.3]{Toft10} it follows that the map
$(f,\phi)\mapsto V_{\phi} f$ from $\mascS (\rr d) \times \mascS (\rr d)$ 
to $\mascS(\rr {2d})$ is uniquely extendable to a continuous map from 
$(\maclS_{s}^{\sigma})'(\rr d)\times (\maclS_{s}^{\sigma})'(\rr d)$
to $(\maclS_{s,\sigma}^{\sigma ,s})'(\rr {2d})$, and restricts to a continuous map
from $\maclS_{s}^{\sigma}(\rr d)\times \maclS_{s}^{\sigma}(\rr d)$
to $\maclS_{s,\sigma}^{\sigma ,s}(\rr {2d})$.

\par

The same conclusion holds with $\Sigma _s^\sigma$ and $\Sigma_{s,\sigma}^{\sigma ,s}$ 
in place of $\maclS _{s}^{\sigma}$ and $\maclS _{s,\sigma}^{\sigma ,s}$, respectively, at each place.

\par

The following results characterize Gelfand-Shilov spaces and their distribution
spaces in terms of estimates of short-time Fourier transform.

\par

\begin{prop}\label{stftGelfand2}
Let $f$ be a Gelfand-Shilov distribution on $\rr {d_1+d_2}$,
$$
s_j,\sigma _j>0\quad \text{and}\quad
\phi \in \maclS _{s_1,s_2}^{\sigma _1,\sigma _2}(\rr {d_1+d_2})\setminus 0
\quad
(\phi \in \Sigma _{s_1,s_2}^{\sigma _1,\sigma _2}(\rr {d_1+d_2})\setminus 0).
$$
Then the following is true:
\begin{enumerate}
\item
$f\in  \maclS _{s_1,s_2}^{\sigma _1,\sigma _2}(\rr {d_1+d_2})$
($f\in  \Sigma _{s_1,s_2}^{\sigma _1,\sigma _2}(\rr {d_1+d_2})$), if and only if
\begin{equation}\label{stftexpest2}
|V_\phi f(x_1,x_2,\xi _1,\xi _2)| \lesssim
e^{-r (|x_1|^{\frac 1{s_1}} + |x_2|^{\frac 1{s_2}}
+ |\xi _1|^{\frac 1{\sigma _1}} + |\xi _2|^{\frac 1{\sigma _2}})},
\end{equation}
for some $r > 0$ (for every $r>0$).

\vrum

\item
$f\in (\maclS _{s_1,s_2}^{\sigma _1,\sigma _2})'(\rr {d_1+d_2})$
($f\in (\Sigma _{s_1,s_2}^{\sigma _1,\sigma _2})'(\rr {d_1+d_2})$), if and only if
\begin{equation}\label{stftexpest2dist}
|V_\phi f(x_1,x_2,\xi _1,\xi _2)| \lesssim
e^{r (|x_1|^{\frac 1{s_1}} + |x_2|^{\frac 1{s_2}}
+ |\xi _1|^{\frac 1{\sigma _1}} + |\xi _2|^{\frac 1{\sigma _2}})},
\end{equation}
for every $r > 0$ (for some $r>0$).
\end{enumerate}
\end{prop}

\par

A proof of Proposition \ref{stftGelfand2} (1) can be found in
e.{\,}g. \cite{GZ} (cf. \cite[Theorem 2.7]{GZ}) and a proof of
Proposition \ref{stftGelfand2} (2) in the case $d_2=0$
can be found in \cite{Toft18}. The general case of Proposition \ref{stftGelfand2} (2)
follows by similar arguments as in \cite{Toft18} and is left for the reader.
See also \cite{CPRT10} for related results.

\par

Next we consider Gevrey classes on $\rr d$. Let $\sigma \ge 0$.
For any compact set $K\subseteq \rr d$, $h>0$ and $f\in C^{\infty}(K)$ let
\begin{equation}\label{e2}
\nm {f}{K,h,\sigma} \equiv \underset{\alpha\in \nn d}\sup
\left (
\frac{\nm {\partial^{\alpha}f}{L^{\infty}(K)}}{h^{\vert \alpha\vert}\alpha ! ^\sigma}
\right ) .
\end{equation} 
The Gevrey class $\maclE ^\sigma (K)$ ($\maclE ^{\sigma ;0}(K)$) of order $\sigma$ and
of Roumieu type (of Beurling type) is the set of all
$f\in C^{\infty}(K)$ such that \eqref{e2} is finite for some (for every)
$h>0$. We equipp $\maclE ^\sigma (K)$ ($\maclE ^{\sigma ;0}(K)$) by
the inductive (projective) limit topology with respect to $h>0$,
supplied by the seminorms in \eqref{e2}. Finally if $\lbrace
K_j\rbrace_{j\geq 1}$ is an exhausted
set of compact subsets of $\rr d$, then let
\begin{alignat*}{3}
\maclE ^\sigma (\rr d) &= \projlim{j} \maclE ^\sigma (K_j)
& \quad &\text{and} &\quad
\maclE ^{\sigma ;0}(\rr d) &= \projlim{j} \maclE ^{\sigma ;0}(K_j).
\intertext{In particular,}
\maclE ^\sigma (\rr d) &=\underset{j\geq 1}\bigcap \maclE ^\sigma (K_j)
& \quad &\text{and} &\quad
\maclE ^{\sigma ;0}(\rr d) &= \underset{j\geq 1}\bigcap \maclE ^{\sigma ;0}(K_j).
\end{alignat*}
It is clear that $\maclE ^{0;0}(\rr d)$ contains all constant functions
on $\rr d$, and that $\maclE ^{0}(\rr d)\setminus \maclE ^{0;0}(\rr d)$
contains all non-constant trigonometric polynomials.

\par

\subsection{Ordered, dual and phase split bases}\label{subsec1.2}

\par

Our discussions involving Zak transforms, periodicity,
modulation spaces and Wiener spaces are done in terms of suitable bases.

\par

\begin{defn}\label{Def:OrdBasis}
Let $E = \{ e_1,\dots,e_d \}$ be an \emph{ordered} basis of
$\rr {d}$. Then $E'$ denotes the basis of $e_1',\dots,e_d'$ 
in $\rr {d}$ which satisfies
$$
\scal {e_j} {e'_k}=2\pi \delta_{jk}
\quad \text{for every}\quad
j,k =1,\dots, d.
$$
The corresponding lattices are given by
\begin{align*}
\Lambda _E
&=
\sets{n_1e_1+\cdots+n_de_d}{(n_1,\dots,n_d)\in \zz d},
\intertext{and}
\Lambda'_E
&=
\Lambda_{E'}=\sets{\nu _1e'_1+\cdots+\nu _de'_d}{(\nu _1,\dots,\nu _d)
\in \zz d}.
\end{align*}
The sets $E'$ and $\Lambda '_E$ are
called the dual basis and dual lattice of $E$ and $\Lambda _E$, respectively.
If $E_1,E_2$ are ordered bases of $\rr d$ such that a permutation of
$E_2$ is the dual basis for $E_1$, then the pair $(E_1,E_2)$ are called
\emph{permuted dual bases} (to each others on $\rr d$).
\end{defn}

\par

\begin{rem}\label{Rem:TEMap}
Evidently, if $E$ is the same as in Definition \ref{Def:OrdBasis},
then there is a matrix $T_E$ with $E$ as the image of
the standard basis in $\rr d$. Then $E'$ is the image of the standard basis
under the map $T_{E'}= 2\pi(T^{-1}_E)^t$.
\end{rem}
%
%

\par

\begin{defn}\label{Def:FromTwoToOneBasis}
Let $E_1,E_2$ be ordered bases of $\rr d$,
$$
V_1=\sets {(x,0)\in \rr {2d}}{x\in \rr d},
\quad
V_2=\sets {(0,\xi )\in \rr {2d}}{\xi \in \rr d}
$$
and let $\pi _j$ from $\rr {2d}$ to $\rr d$, $j=1,2$, be the projections
$$
\pi _1(x,\xi ) = x
\quad \text{and}\quad
\pi _2(x,\xi ) = \xi .
$$
Then $E_1\times E_2$ is the ordered basis $\{ e_1,\dots ,e_{2d}\}$
of $\rr {2d}$ such that
\begin{alignat*}{2}
\{ e_1,\dots ,e_d\} &\subseteq V_1,
&\qquad
E_1 &= \{ \pi _1(e_1),\dots ,\pi _1(e_d)\} ,
\\[1ex]
\{ e_{d+1},\dots ,e_{2d}\}
&\subseteq V_2 &
\quad \text{and}\quad
E_2 &= \{ \pi _2(e_{d+1}),\dots ,\pi _2(e_{2d})\} .
\end{alignat*}
\end{defn}

\par

In the phase space it is convenient to consider phase split bases, which
are defined as follows.

\par

\begin{defn}\label{Def:Phasesplit}
Let $V_1$, $V_2$, $\pi _1$ and $\pi _2$ be as in Definition
\ref{Def:FromTwoToOneBasis}, $E$ be an ordered basis of
the phase space $\rr {2d}$ and let
$E_0\subseteq E$. Then
$E$ is called \emph{phase split} (with respect to $E_0$),
if the following is true:
\begin{enumerate}
\item the span of $E_0$ and $E\setminus E_0$ equal
$V_1$ and $V_2$, respectively;

\vrum

\item if $E_1=\pi _1(E_0)$ and $E_2=
\pi _2(E\setminus E_0)$, then $(E_1,E_2)$
are permuted dual bases.
\end{enumerate}
If $E$ is a phase split basis with respect to $E_0$ and that $E_0$ consists of
the first $d$ vectors in $E$, then $E$ is called \emph{strongly phase split}
(with respect to $E_0$).

\end{defn}

\par

In Definition \ref{Def:Phasesplit} it is understood that the orderings of
$E_0$ and $E\setminus E_0$
are inherited from the ordering in $E$.

\par

\begin{rem}\label{Rem:CommentPhasSplit}
Let $E$ and $E_j$, $j=0,1,2$ be the same as in Definition \ref{Def:Phasesplit}.
It is evident that $E_0$ and $E\setminus E_0$  consist of $d$ elements, and
that $E_1$ and $E_2$ are uniquely defined if the orders of $E_0$ and
$E\setminus E_0$ are preserved. The pair $(E_1,E_2)$ is called
\emph{the pair of permuted dual bases, induced by $E$ and $E_0$}.

\par

On the other hand, suppose that $(E_1,E_2)$ is a pair of permuted dual bases to each
others on $\rr d$. Then it is clear that for
$E_1\times E_2=\{ e_1,\dots ,e_{2d}\}$ in Definition \ref{Def:FromTwoToOneBasis}
and $E_0=\{ e_1,\dots ,e_d\}$, we have that
$E_0$ and $E$ fullfils all properties in Definition \ref{Def:Phasesplit}. In this case,
$E_1\times E_2$ is called the phase split basis (of $\rr {2d}$) induced by $(E_1,E_2)$.

\par

It follows that if $E'$, $E_1'$ and $E_2'$ are the dual bases of $E$, $E_1$ and $E_2$,
repsectively, then $E'=E_1'\times E_2'$.
\end{rem}

\par

\subsection{Invariant quasi-Banach spaces and spaces of
mixed quasi-normed spaces of Lebesgue types}\label{subsec1.3}

\par

We recall that a quasi-norm $\nm {\cdo}{\mascB}$ of order $r \in (0,1]$ on the
vector-space $\mascB$ over $\mathbf C$ is a nonnegative functional on
$\mascB$ which satisfies
\begin{alignat}{2}
 \nm {f+g}{\mascB} &\le 2^{\frac 1r-1}(\nm {f}{\mascB} + \nm {g}{\mascB}), &
\quad f,g &\in \mascB ,
\label{Eq:WeakTriangle1}
\\[1ex]
\nm {\alpha \cdot f}{\mascB} &= |\alpha| \cdot \nm f{\mascB},
& \quad \alpha &\in \mathbf{C},
\quad  f \in \mascB
\notag
\intertext{and}
   \nm f {\mascB} &= 0\quad  \Leftrightarrow \quad f=0. & &
\notag
\end{alignat}
The space $\mascB$ is then called a quasi-normed space. A complete
quasi-normed space is called a quasi-Banach space. If $\mascB$
is a quasi-Banach space with
quasi-norm satisfying \eqref{Eq:WeakTriangle1}
then by \cite{Aik,Rol} there is an equivalent quasi-norm to $\nm \cdo {\mascB}$
which additionally satisfies
\begin{align}\label{Eq:WeakTriangle2}
\nm {f+g}{\mascB}^r \le \nm {f}{\mascB}^r + \nm {g}{\mascB}^r, 
\quad f,g \in \mascB .
\end{align}
From now on we always assume that the quasi-norm of the quasi-Banach space $\mascB$
is chosen in such way that both \eqref{Eq:WeakTriangle1} and \eqref{Eq:WeakTriangle2}
hold.

\par

Before giving the definition of $v$-invariant spaces, we recall some facts on weight
functions.

\par

A \emph{weight} or \emph{weight function} on $\rr d$ is a
positive function $\omega
\in  L^\infty _{loc}(\rr d)$ such that $1/\omega \in  L^\infty _{loc}(\rr d)$.
The weight $\omega$ is called \emph{moderate},
if there is a positive weight $v$ on $\rr d$ such that
\begin{equation}\label{moderate}
\omega (x+y) \lesssim \omega (x)v(y),\qquad x,y\in \rr d.
\end{equation}
If $\omega$ and $v$ are weights on $\rr d$ such that
\eqref{moderate} holds, then $\omega$ is also called
\emph{$v$-moderate}.
We note that \eqref{moderate} implies that $\omega$ fulfills
the estimates
\begin{equation}\label{moderateconseq}
v(-x)^{-1}\lesssim \omega (x)\lesssim v(x),\quad x\in \rr d.
\end{equation}
We let $\mascP _E(\rr d)$ be the set of all moderate weights on $\rr d$.

\par

By \cite{Gc2.5} it follows that if $\omega \in \mascP _E(\rr d)$, then
$$
e^{-r|x|}\lesssim \omega (x)\lesssim e^{r|x|}
\quad \text{and}\quad
\omega (x+y)\lesssim \omega (x)e^{r|y|},\quad x,y\in \rr d.
$$
for some $r>0$.

\par

We say that $v$ is
\emph{submultiplicative} if $v$ is even and \eqref{moderate}
holds with $\omega =v$. In the sequel, $v$ and $v_j$ for
$j\ge 0$, always stand for submultiplicative weights if
nothing else is stated.

\par

In the following we define a broad family of mixed quasi-normed Lebesgue spaces.
Here $Q_E$ denotes the closed parallelepiped (or the \emph{$E$-cube}) is
spanned by the ordered basis $E$ of $\rr d$.

\par

\begin{defn}\label{Def:DiscLebSpaces}
Let $E = \{  e_1,\dots ,e_d \}$ be an ordered basis of $\rr d$, $Q_E$ be the
parallelepiped spanned by $E$, $\omega \in \mascP _E(\rr d)$
$\mabfq =(q_1,\dots ,q_d)\in (0,\infty ]^{d}$ and $r=\min (1,\mabfq )$.
If  $f\in L^r_{loc}(\rr d)$, then
$$
\nm f{L^{\mabfq }_{E,(\omega )}}\equiv
\nm {g_{d-1}}{L^{q_{d}}(\mathbf R)}
$$
where  $g_k\, :\, \rr {d-k}\to \mathbf R$,
$k=0,\dots ,d-1$, are inductively defined as
\begin{align*}
g_0(x_1,\dots ,x_{d})
&\equiv
|f(x_1e_1+\cdots +x_{d}e_d)\omega (x_1e_1+\cdots +x_{d}e_d)|,
\\[1ex]
\intertext{and}
g_k(\mabfz _k) &\equiv
\nm {g_{k-1}(\cdo ,\mabfz_k)}{L^{q_k}(\mathbf R)},
\quad \mabfz _k\in \rr {d-k},\ k=1,\dots ,d-1.
\end{align*}
If $\Omega \subseteq \rr d$ is measurable,
then $L^{\mabfq }_{E,(\omega )}(\Omega )$ consists
of all $f\in L^r_{loc}(\Omega )$ with finite quasi-norm
$$
\nm f{L^{\mabfq}_{E,(\omega )}(\Omega )}
\equiv
\nm {f_\Omega }{L^{\mabfq}_{E,(\omega )}(\rr d)},
\qquad
f_\Omega (x)
\equiv
\begin{cases}
f(x), &\text{when}\ x\in \Omega
\\[1ex]
0, &\text{when}\ x\notin \Omega .
\end{cases}
$$
The space $L^{\mabfq }_{E,(\omega )}(\Omega )$ is called 
\emph{$E$-split Lebesgue space (with respect to $\omega$, $\mabfq$
and $\Omega$)}.
\end{defn}

\par

We let $\ell _0' (\Lambda ' _E)$ be the set of all formal sequences
$a= \{ a(j) \} _{j\in \Lambda _E}\subseteq \mathbf C$ on $\Lambda _E$
and
$\ell ^\mabfp _{E,(\omega )}(\Lambda _E)$ be the \emph{discrete
version} of $L^\mabfp _{E,(\omega )}(\rr d)$ when
$\mabfp \in (0,\infty ]^d$. That is, $\ell ^\mabfp _{E,(\omega )}(\Lambda _E)$
consists of all formal sequences $a\in \ell _0' (\Lambda ' _E)$ such that
$$
\nm a{\ell ^\mabfp _{E,(\omega )}}
=
\nm a{\ell ^\mabfp _{E,(\omega )}(\Lambda _E)}
\equiv
\Nm {\sum _{j\in \Lambda _E}a(j)\chi _{j+Q_E}}{L^{\mabfq}_{E,(\omega )}(\rr d)}
$$
is finite. Here $\chi _\Omega$ is the characteristic function of the set $\Omega$.


\par

\begin{rem}\label{Rem:LebExpIdent}
Evidently, $L^{\mabfq}_{E,(\omega )} (\Omega )$ and
$\ell ^{\mabfq}_{E,(\omega )} (\Lambda )$
in Definition \ref{Def:DiscLebSpaces} are quasi-Banach spaces of order
$\min (\mabfp,1)$. We set
$$
L^{\mabfq}_{E} = L^{\mabfq}_{E,(\omega )}
\quad \text{and}\quad
\ell ^{\mabfq}_{E} = \ell ^{\mabfq}_{E,(\omega )}
$$
when $\omega =1$. For conveniency we identify
$\mabfq = (q,\dots ,q)\in (0,\infty ]^d$
with $q\in (0,\infty ]$ when considering spaces involving
Lebesgue exponents. In particular,
\begin{alignat*}{5}
L^{q}_{E,(\omega )} &=  L^{\mabfq}_{E,(\omega )},
&\quad
L^{q}_{E} &= L^{\mabfq}_{E},
&\quad
\ell ^{q}_{E,(\omega )} &= \ell ^{\mabfq}_{E,(\omega )}
&\quad &\text{and} &\quad 
\ell ^{q}_{E} &= \ell ^{\mabfq}_{E}
\intertext{for such $\mabfq$, and notice that these spaces agree with}
&{\phantom =}L^q_{(\omega )}, &\qquad
&{\phantom =}L^q, &\qquad
&{\phantom =}\ell ^{q}_{(\omega )}
&\quad &\text{and}&\quad
&{\phantom =}\ell ^{q},
\end{alignat*}
respectively, with equivalent quasi-norms.
\end{rem}

\par

\subsection{Modulation and Wiener spaces}\label{subsec1.4}

\par

We consider the following broad family of modulation spaces which contains
the classical modulation spaces, introduced by Feichtinger in \cite{F1}.

\par

\begin{defn}\label{Def:ExtClassModSpaces}
Let $\mabfp ,\mabfq \in (0,\infty ]^d$, $E_1$ and $E_2$ be ordered bases
of $\rr d$, $E=E_1\times E_2$,
$\phi \in \Sigma _1(\rr d)\setminus 0$ and let
$\omega \in \mascP _E(\rr {2d})$.
For any $f\in \Sigma _1'(\rr d)$ set
\begin{multline*}
\nm f{M^{\mabfp ,\mabfq}_{E,(\omega )}}
\equiv
\nm {H_{1,f,E_1,\mabfp ,\omega }}{L^{\mabfq}_{E_2}},
\\[1ex]
\quad \text{where}\quad
H_{1, f,E_1,\mabfp ,\omega }(\xi )
\equiv
\nm {V_\phi f (\cdo ,\xi )\omega (\cdo ,\xi )}{L^{\mabfp}_{E_1}}
\end{multline*}
and
\begin{multline*}
\nm f{W^{\mabfp ,\mabfq}_{E,(\omega )}}
\equiv
\nm {H_{2,f,E_2,\mabfq ,\omega }}{L^{\mabfp}_{E_1}},
\\[1ex]
\quad \text{where}\quad
H_{2, f,E_2,\mabfq ,\omega }(x)
\equiv
\nm {V_\phi f (x,\cdo )\omega (x,\cdo )}{L^{\mabfq}_{E_2}}.
\end{multline*}
The \emph{modulation space}
$M^{\mabfp ,\mabfq}_{E,(\omega )}(\rr d)$
($W^{\mabfp ,\mabfq}_{E,(\omega )}(\rr d)$)
consist of all $f\in \Sigma _1'(\rr d)$ such that
$\nm f{M^{\mabfp ,\mabfq}_{E,(\omega )}}$ 
($\nm f{W^{\mabfp ,\mabfq}_{E,(\omega )}}$) is finite.
\end{defn}

\par

The theory of modulation spaces has developed in different ways since they
were introduced in \cite{F1} by Feichtinger. (Cf. e.{\,}g. \cite{Fei5,GaSa,Gc2,Toft15}.)
For example, let $\mabfp$, $\mabfq$, $E$, $\omega$ and $v$ be the same
as in Definition \ref{Def:ExtClassModSpaces},
and let $L^{\mabfp ,\mabfq}_E(\rr {2d})$ and $r=\min (1,\mabfp ,\mabfq)$.
Then $M^{\mabfp ,\mabfq}_{E,(\omega )}(\rr d)$
is a quasi-Banach space. Moreover, $f\in M^{\mabfp ,\mabfq}_{E,(\omega )}(\rr d)$
if and only if $V_\phi f \cdot \omega \in L^{\mabfp ,\mabfq}_{E}(\rr {2d})$,
and different choices of $\phi$ give rise to equivalent quasi-norms in
Definition \ref{Def:ExtClassModSpaces}.
We also note that
$$
\Sigma _1(\rr d) \subseteq M^{\mabfp ,\mabfq}_{E,(\omega )}(\rr d) \subseteq \Sigma _1'(\rr d).
$$
Similar facts hold for the space $W^{\mabfp ,\mabfq}_{E,(\omega )}(\rr d)$.
(Cf. \cite{GaSa,Toft15}.)

\par

\begin{defn}\label{Def:WienerSpace}
Let $\mabfp ,\mabfr \in (0,\infty ]^d$, 
$\omega _0\in \mascP _E(\rr d)$, $\omega \in \mascP _E(\rr {2d})$,
$\phi \in \Sigma _1(\rr d)\setminus 0$, 
$E\subseteq \rr d$ be an ordered basis, and let $Q_E$ be the closed parallelepiped
spanned by $E$. Also let
$f$ and $F$ be measurable on $\rr d$ respective
$\rr {2d}$, and let $F_\omega =F\cdot \omega$.
Then $\nm f{\sfW ^{\mabfr}_{E}(\omega _0,\ell ^{\mabfp}_{E} )}$ is given by
\begin{align*}
\nm f{\sfW ^{\mabfr}_{E}(\omega _0,\ell ^{\mabfp}_{E} )}
&\equiv
\nm {h_{E,\omega _0,\mabfr ,f}}{\ell ^{\mabfp}_{E}(\Lambda _E)},
\\[1ex]
h_{E,\omega _0,\mabfr ,f} (j)
&=
\nm {f} {L_E^{\mabfr}(j+Q_E)}\omega _0(j),
\quad
j\in \Lambda _E.
\end{align*}
The set $\sfW ^{\mabfr}_{E}(\omega ,\ell ^{\mabfp}_{E} )$ consists of all measurable
$f$ on $\rr d$ such that $\nm f{\sfW ^{\mabfr}_{E}(\omega _0,\ell ^{\mabfp}_{E} )}<\infty$.
\end{defn}

\par

We observe that $\sfW ^{\mabfr}_{E}(\omega _0,\ell _{E}^{\mabfp})$
is equal to $W(L^r,L^{\mabfp}_{(\omega _0)})$ in \cite{FG1,Gc2,GaSa,Rau1,Rau2}
when $E$ is the standard basis. In particular,
$\sfW ^{\mabfr}_{E}(\omega _0,\ell _{E}^{\mabfp})$ is related to so-called coorbit spaces.
(See \cite{Fe0,FG1,FG2,FeLu,Rau1,Rau2}.)

%
%
\par

\begin{rem}\label{Rem:PerWienerSpace}
Let $\mabfp, $$\mabfq$, $\omega _0$, $\omega$, $E$,
$f$ and $F$
be the same as in Definition \ref{Def:WienerSpace}.
Evidently, by using the fact that $\omega _0$ is
$v_0$-moderate for some $v_0$,
it follows that
\begin{align*}
\nm {f\cdot \omega _0}{\sfW ^{\mabfq}_{E}(1,\ell ^{\mabfp}_{E})}
&\asymp
\nm {f}{\sfW ^{\mabfq}_{E}(\omega _0,\ell ^{\mabfp}_{E} )}
\end{align*}
for $k=1,2$.
\end{rem}

\par

\begin{rem}\label{Rem:WienerSpace}
For the spaces in Definition \ref{Def:WienerSpace}
we set $\sfW ^{q_0,\mabfr _0} =  \sfW ^{\mabfr}$, when
\begin{alignat*}{2}
\mabfr _0 &=(r_1,\dots ,r_d)\in (0,\infty ]^d,&
\quad \text{and}\quad
\mabfr &= (q_0,\dots ,q_0,r_1,\dots ,r_d)\in (0,\infty ]^{2d},
\end{alignat*}
and similarly for other types of exponents and for the spaces
in Definition \ref{Def:ExtClassModSpaces}. (See also Remark
\ref{Rem:LebExpIdent}.)
We also set
$$
M^{\infty ,\mabfq}_{E,(\omega )}
=
M^{\infty ,\mabfq}_{E_2,(\omega )}
\quad \text{and}\quad
W^{\infty,\mabfq}_{E,(\omega )}
=
W^{\infty,\mabfq}_{E_2,(\omega )}
$$
when $E_1,E_2$ are ordered bases of $\rr d$ and
$E=E_1\times E_2$, for spaces in Definition \ref{Def:ExtClassModSpaces},
since these spaces are independent of $E_1$.
\end{rem}

\par

The following result is essential when characterizing elements
in modulation spaces in terms of estimates of their
Zak transforms. We omit the proof since the result is
a consequence of \cite[Proposition 2.6]{Toft26}.

\par

\begin{prop}\label{Prop:SpecCaseWienerRel1}
Let $E_0$ be a basis for $\rr d$, $E_0'$ be its dual basis,
$E=E_0\times E_0'$, $\mabfq ,\mabfr \in (0,\infty ]^{d}$,
$\omega _0\in \mascP _E(\rr {d})$ and
$\omega (x,\xi )=\omega _0(\xi )$.
Then
\begin{alignat*}{1}
\nm f{M^{\infty ,\mabfq}_{E,(\omega )}}
&\asymp
\nm {\fy _{f,\omega ,E,\mabfr}}{L^{\mabfq}_{E_0'}}
\quad \text{and}\quad
\nm f{W^{\infty ,\mabfq}_{E,(\omega )}}
\asymp
\sup _{j\in \Lambda _E}
\left (
\nm {\psi _{f,\omega ,\mabfq}}{L^{\mabfr}(j+Q_E)}
\right ),
\intertext{where}
\fy _{f,\omega ,E,\mabfr}(\xi )
&\equiv
\sup _{j\in \Lambda _E}
\left (
\nm {V_\phi f(\cdo ,\xi )\omega (\cdo ,\xi )}{L^{\mabfr}_E(j+Q_E)}
\right )
\intertext{and}
\psi _{f,\omega ,\mabfq} (x)
&\equiv
\nm {V_\phi f(x,\cdo )\omega (x,\cdo )}{L^{\mabfq}_{E_0'}}.
\end{alignat*}
\end{prop}

\par

The next result is a restatement of Proposition 1.15$'$ in \cite{Toft26}.
The proof is therefore omitted. Here
\begin{equation}\label{Eq:SubMultMod}
(\Theta _\rho v)(x,\xi )=v(x,\xi )\eabs {x,\xi}^\rho ,
\quad \text{where}\quad
\rho \ge 2d\left ( \frac 1r-1 \right ).
\end{equation}

\par

\begin{prop}\label{Prop:WienerEquiv}
Let $E$ be a phase split basis for $\rr {2d}$, $\mabfp ,\mabfr \in (0,\infty ]^{2d}$,
$r\in (0,\min (1,\mabfp )]$, $\omega ,v\in \mascP _E(\rr {2d})$
be such that $\omega$ is $v$-moderate, $\rho$ and $\Theta _\rho v$ $\rho$ be
as in \eqref{Eq:SubMultMod} with strict inequality when $r<1$, and let $\phi _1,\phi _2\in
M^1_{(\Theta _\rho v)} (\rr d)\setminus 0$.
Then
$$
\nm f{M^{\mabfp}_{E,(\omega )}}
\asymp
\nm {V_{\phi _1} f}{L^\mabfp _{E,(\omega )}}
\asymp 
\nm {V_{\phi _2} f}{\sfW ^{\mabfr} _{E}(\omega ,\ell ^\mabfp _{E})},\quad f\in
\maclS '_{1/2}(\rr d).
$$
In particular, if $f\in \maclS _{1/2}'(\rr d)$, then
$$
f\in M^{\mabfp}_{E,(\omega )}(\rr {2d})
\quad \Leftrightarrow \quad
V_{\phi _1} f \in L^\mabfp _{E,(\omega )}(\rr {2d})
\quad \Leftrightarrow \quad
V_{\phi _2} f \in \sfW ^{\mabfr} _{E}(\omega ,\ell ^\mabfp _{E}
(\Lambda _E)).
$$
\end{prop}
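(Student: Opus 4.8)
The plan is to reduce the statement to the already-established identification $M(\omega,\mascB)=M^{\mabfp}_{E,(\omega)}$ from the discussion after Definition \ref{Def:ExtClassModSpaces}, together with the cited \cite[Proposition 3.4]{Toft15}, and then to interpolate the two window functions by a standard change-of-window argument. First I would recall that, by definition, $\nm f{M^{\mabfp}_{E,(\omega)}}=\nm{V_\phi f\cdot\omega}{L^{\mabfp}_E}$ for any fixed $\phi\in\Sigma_1(\rr d)\setminus 0$, and that different admissible windows yield equivalent quasi-norms; the point of the proposition is precisely to enlarge the class of admissible windows from $\Sigma_1$ to $M^1_{(\Theta_\rho v)}$ and, simultaneously, to switch the outer space from $L^{\mabfp}_{E,(\omega)}$ to the Wiener space $\sfW^\infty_E(\omega,\ell^{\mabfp}_E)$.

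The middle equivalence $\nm f{M^{\mabfp}_{E,(\omega)}}\asymp\nm{V_{\phi_1}f}{L^{\mabfp}_{E,(\omega)}}$ for $\phi_1\in M^1_{(\Theta_\rho v)}\setminus 0$ is handled by the standard Gabor/STFT change-of-window formula: writing $\phi=S^\Lambda_{\psi,\psi}\psi$ with $\Lambda=\ep\Lambda_E$ for $\ep\le\ep_0$ as in \eqref{Eq:SubMultMod} and the subsequent discussion, and using the pointwise bound
\begin{equation*}
|V_{\phi_1}f(x,\xi)|\lesssim (|V_{\phi_1}\phi_2|*|V_{\phi_2}f|)(x,\xi),
\end{equation*}
one sees that $V_{\phi_1}f\cdot\omega$ is controlled by a convolution of $V_{\phi_2}f\cdot\omega$ with the weighted envelope $|V_{\phi_1}\phi_2|\cdot\Theta_\rho v\in L^1$, which is exactly why the hypothesis $\phi_j\in M^1_{(\Theta_\rho v)}$ (hence $\phi_j\in M^r_{(v)}$, as noted in the text preceding the proposition) is the right one: it guarantees $V_{\phi_1}\phi_2\in L^1_{(\Theta_\rho v)}$. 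Since $L^{\mabfp}_{E,(\omega)}$ is a $v$-invariant quasi-Banach space (Remark \ref{Rem:LebExpIdent}) and convolution with an $L^1_{(v)}$-function maps it boundedly into itself — this is where one invokes the semi-continuous convolution estimates promised in Section \ref{sec2}, or, if $r\ge 1$, just Young's inequality — one gets $\nm{V_{\phi_1}f\cdot\omega}{L^{\mabfp}_E}\lesssim\nm{V_{\phi_2}f\cdot\omega}{L^{\mabfp}_E}$, and by symmetry the reverse, establishing independence of the window within the class $M^1_{(\Theta_\rho v)}$ and thus the first $\asymp$.

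For the last equivalence $\nm f{M^{\mabfp}_{E,(\omega)}}\asymp\nm{V_{\phi_2}f}{\sfW^\infty_E(\omega,\ell^{\mabfp}_E)}$ I would simply quote \cite[Proposition 3.4]{Toft15}, whose reformulation is stated in the excerpt just before Proposition \ref{Prop:WienerEquiv}: that result says precisely that the modulation-space quasi-norm is comparable to the Wiener-amalgam quasi-norm with local component $L^\infty$ (equivalently, by the window-independence just proved, with any local $L^r$) and global component $\ell^{\mabfp}_E$. Chaining the two $\asymp$ relations through $\nm f{M^{\mabfp}_{E,(\omega)}}$ gives the displayed triple equivalence, and the "in particular" statement is then immediate since finiteness of each quasi-norm is equivalent to membership in the corresponding space, all three spaces being subsets of $\maclS'_{1/2}(\rr d)$ by the embeddings recorded after Definition \ref{Def:ExtClassModSpaces}. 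The main obstacle is the first step: making the change-of-window convolution estimate work for $\mabfp$ with entries below $1$, where Young's inequality fails and one must replace ordinary convolution by the semi-continuous version and verify the corresponding mixed-quasi-norm Lebesgue/Wiener estimates — but this machinery is exactly what Section \ref{sec2} develops, so in this proof it can be cited rather than redone.
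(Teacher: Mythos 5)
The paper does not actually prove this proposition: it is introduced as a reformulation of \cite[Proposition 3.4]{Toft15} and the proof is explicitly omitted, so there is no in-paper argument to compare yours against. Your sketch is nevertheless essentially the standard route, and it is also the route the paper itself follows when it proves the strengthening Proposition \ref{Prop:WienerEquiv}$'$ in Section \ref{sec2}: establish independence of the window within $M^1_{(\Theta_\rho v)}$ by a change-of-window convolution bound, and identify the resulting quasi-norm with the Wiener-amalgam quasi-norm with local component $L^\infty$ and global component $\ell^{\mabfp}_E$.

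One step as written is not correct, although you flag the issue yourself in the closing sentence: convolution with an $L^1_{(v)}$-function does \emph{not} map $L^{\mabfp}_{E,(\omega)}$ boundedly into itself when $\min(\mabfp)<1$, so the displayed continuous inequality $|V_{\phi_1}f|\lesssim |V_{\phi_1}\phi_2|*|V_{\phi_2}f|$ cannot be combined with Young's inequality in the quasi-Banach range; this is precisely the failure of convexity the introduction warns about. The repair --- which is what the paper carries out in detail in the proof of Proposition \ref{Prop:WienerEquiv}$'$ --- is to expand $\phi_1$ in a Gabor frame generated by $\phi_2$ over a sufficiently dense lattice $\Lambda=\frac 1n\Lambda_E$, which replaces the continuous convolution by the semi-discrete convolution $a*_{[E_n]}|V_{\phi_2}f|$ with $a(k)=|V_\psi\phi_1(-k)|$ and $\psi$ the canonical dual window; the hypothesis $\phi_j\in M^1_{(\Theta_\rho v)}$ with the strict inequality $\rho>2d(\tfrac 1r-1)$ is exactly what guarantees $a\cdot v\in\ell^r$, so that Proposition \ref{Prop:SemiContConvEstNonPer} (or the discrete Young inequality applied to $(a\cdot v)^r$) closes the estimate. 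With that substitution your argument is complete, and delegating the remaining equivalence with $\sfW^\infty_E(\omega,\ell^{\mabfp}_E)$ to \cite[Proposition 3.4]{Toft15} is legitimate --- it is exactly what the paper does.
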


\par

\subsection{Classes of periodic elements}

\par

Let $s,\sigma \in\mathbf{R}_{+}$ be such that $s+t\geq 1$,
$f\in (\mathcal S_{s}^{\sigma})'(\rr d)$, 
$E$ be a basis of $\rr d$ and let $E_0\subseteq E$.
Then $f$ is called \emph{$E_0$-periodic} if $f(x+y)=f(x)$ 
for every $x\in \rr d$ and $y\in E_0$.

\par

We note that for any $E$-periodic function
$f\in C^{\infty}(\rr d)$, we have 
\begin{align}
f &= \sum_{\alpha\in \Lambda ' _E} c(f,{\alpha})e^{i\scal \cdo \alpha},
\label{Eq:Expan2}
\intertext{where $c(f,{\alpha})$ are the Fourier coefficients given by}
c(f,{\alpha}) &\equiv \vert Q_E \vert ^{-1}
(f,e^{i\scal \cdo \alpha})_{L^{2}(E)}.\notag
\end{align}

\par

For any $s\ge 0$ and basis $E\subseteq \rr d$ we let
$\maclE ^{\sigma ;0}_{E}(\rr d)$
and $\maclE ^{\sigma}_{E}(\rr d)$ be the sets of all $E$-periodic 
elements
in $\maclE ^{\sigma ;0}(\rr d)$ and in $\maclE ^{\sigma}(\rr d)$, respectively.

\par

Let $s,s_0,\sigma ,\sigma _0>0$ be such that $s+\sigma \ge 1$,
$s_0+\sigma _0\ge 1$ and $(s_0,\sigma _0)\neq (\frac 12,\frac 12)$. Then
we recall that the duals
$(\maclE ^\sigma _E)'(\rr d)$ and $(\maclE ^{\sigma _0;0}_E)'(\rr d)$ of
$\maclE ^\sigma _E(\rr d)$ and $\maclE ^{\sigma _0;0}_E(\rr d)$, respectively,
can be identified with the $E$-periodic elements in $(\maclS _s^\sigma)'(\rr d)$
and $(\Sigma _{s_0}^{\sigma _0})'(\rr d)$ respectively via unique extension of the form
\begin{align*}
(f,\phi)_{E} &= \sum_{\alpha \in \Lambda '_{E}} c(f,{\alpha})
\overline{c(\phi ,{\alpha})}
\end{align*}
on $\maclE ^{\sigma _0;0}_E (\rr d)\times \maclE ^{\sigma _0;0}_E (\rr d)$.
We also let $(\maclE ^0_E)'(\rr d)$ be the set of all formal expansions
in \eqref{Eq:Expan2} and $\maclE ^0_E(\rr d)$ be the set of
all formal expansions in \eqref{Eq:Expan2} such that at most finite
numbers of $c(f,{\alpha})$ are non-zero (cf. \cite{ToNa}). We refer to
\cite{Pil2,ToNa} for more characterizations of $\maclE ^{\sigma}_E$,
$\maclE ^{\sigma ;0}_E$ and their duals.

\par

The following definition takes care of spaces of formal
expansions \eqref{Eq:Expan2} with coefficients
obeying specific quasi-norm estimates.

\par

\begin{defn}\label{Def:PerQuasiBanachSpaces}
Let $E$ be a basis of $\rr d$, $\mascB$ be a quasi-Banach
space  continuously embedded in $\ell _0' (\Lambda ' _E)$
and let $\omega _0$ be a weight on $\rr d$. Then
$\maclE _E(\omega _0,\mascB )$ consists of all 
$f\in(\maclE^{0}_E)'(\rr d)$ such that 
\begin{equation*}
\nm f{\maclE _E(\omega _0,\mascB)}
\equiv
\nm {\{ c(f,\alpha)\omega _0(\alpha) \}
_{\alpha \in \Lambda '_E}} {\mascB}
\end{equation*}
is finite.
\end{defn}

\par

The next result is a reformulation of Proposition 1.18$'$ in
\cite{Toft26}. The proof is therefore omitted.

\par

\begin{prop}\label{Prop:PerMod}
Let $E$ be an ordered basis of $\rr d$, $E=E_0\times E_0'$,
$\mabfq ,\mabfr \in (0,\infty ]^d$, 
$\omega _0\in \mascP _E(\rr d)$ and let $\omega (x,\xi )= \omega _0(\xi )$,
$x,\xi \in \rr d$. Then
\begin{equation*}
\maclE _{E}(\omega _0,\ell _{E_0'}^{\mabfq}(\Lambda _{E_0'}))
=
M_{E,(\omega )}^{\infty ,\mabfq}(\rr d)\bigcap (\maclE ^0_E)'(\rr d)
=
W_{E,(\omega )}^{\infty ,\mabfq}(\rr d)\bigcap (\maclE ^0_E)'(\rr d).
\end{equation*}
\end{prop}

\par

\begin{rem}\label{Rem:PerMod}
Let $E_0$, $\mabfq$, $\mabfr$
and $\omega$ be the same as in Proposition
\ref{Prop:PerMod}. Also let $v_0\in \mascP _E(\rr d)$ be such that
$\omega _0$ is $v_0$-moderate, $v(x,\xi )=v_0(\xi )$, $\phi
\in \Sigma _1(\rr d)\setminus 0$,
$r_0\le \min (\mabfr )$, $f\in (\maclE ^0_{E})'(\rr d)$
with Fourier series expansion \eqref{Eq:Expan2},
$$
E_1=E_0\times E_0' = \{ e_1,\dots ,e_{2d}\} 
\quad \text{and}\quad
E_2=\{ e_{d+1},\dots ,e_{2d},e_1,\dots ,e_d \} .
$$
Then it follows from Proposition 2.7 and Remark 2.8 in \cite{Toft26}
that 
\begin{equation}\label{Eq:ModPerNormEquiv3}
\nm {V_\phi f \cdot \omega}{L^{\mabfr ,\mabfq}_{E_1}(Q(E_0)\times \rr d)}
\asymp
\nm {V_\phi f \cdot \omega}{L^{\mabfq ,\mabfr}_{E_2}(\rr d \times Q(E_0))}
\asymp
\nm {c(f,\cdo )}{\ell ^{\mabfq}_{E_0',(\omega _0)}}.
\end{equation}
\end{rem}

\par

\subsection{The Zak transform}

\par

For any ordered basis $E$ of $\rr d$ and $f\in \mascS (\rr d)$,
the \emph{Zak transform} is defined by
\begin{equation}\label{Eq:DefZak}
(Z_E f)(x,\xi ) \equiv \sum _{j\in \Lambda _E}
f(x-j)e^{i\scal j\xi}
\end{equation}

\par

Several properties for the Zak transform can be found in \cite{Gc2}. 
For example, by
the definition it follows that $Z_E$ is continuous from $\mascS 
(\rr d)$ to the set of all smooth functions
on $\rr {2d}$ which are
bounded together with all their derivatives. It is also clear
that $Z_E f$ is quasi-periodic
of order $E$. Here, if $F$ is a function or an ultra-distribution,
then $F$ is called quasi-periodic of order $E$, when
\begin{equation}\label{Eq:QuasiPer}
\begin{alignedat}{1}
F(x+k,\xi ) = e^{i\scal k\xi} F(x,\xi )
\quad \text{and}\quad
F(x,\xi +\kappa ) &= F(x,\xi ),
\\[1ex]
k &\in \Lambda _E,\ 
\kappa \in \Lambda _E'.
\end{alignedat}
\end{equation}
By interpreting \eqref{Eq:DefZak} as a Fourier series in
the $\xi$ variable, we regain $f(x)$ as the zero order Fourier
coefficient, which is evaluated by
\begin{equation}\label{Eq:ZakTransfInv}
f(x) = (Z_E^{-1}F)(x) = |Q_{E'}|^{-1}\int _{Q_{E'}}F(x,\xi )\, d\xi 
\end{equation}

\par

For conveniency we
set $Z_1=Z_E$ when $E$ is the standard basis of $\rr d$,
and recall the following important mapping
properties on $L^2(\rr d)$ and $\mascS (\rr d)$. 

\par

\begin{prop}\label{Prop:ZakTransfBasicMaps}
Let $E$ be an ordered basis of $\rr d$. Then the operator $Z_E$
is homeomorphic from $L^2(\rr d)$
to the set of all quasi-periodic elements of order $E$ in
$L^2_{loc}(\rr {2d})$, and
\begin{equation}\tag*{(\ref{Eq:ZakonL2})$'$}
\nm {Z_E f}{L^2(Q _{E\times E'})} = |Q_{E'}|^{\frac 12}
\nm f{L^2},
\qquad f\in L^2(\rr d).
\end{equation}
\end{prop}

\par

\begin{proof}
Let $T_E$ be as in Remark \ref{Rem:TEMap}.
By straight-forward computations it follows that
\begin{equation}\label{Eq:ZakTransfers}
Z_Ef(x,\xi ) = (Z_1f_E)(T_E^{-1}x,T_E^*\xi ),\quad
f_E=f\circ T_E.
\end{equation}
The assertion now follows from \eqref{Eq:ZakonL2},
\eqref{Eq:ZakTransfers} and
suitable changes of variables in the involved integrals.
The details are left for the reader.
\end{proof}

\par

\section{Zak transform on Gelfand-Shilov spaces,
Lebesgue spaces and modulation spaces}
\label{sec2}

\par

In this section we deduce characterizations of Lebesgue spaces, modulation
spaces, and Gelfand-Shilov spaces and their distribution spaces in terms of
suitable estimates of the Zak transforms of the involved elements. The
characterizations on modulation spaces are related to results given in
\cite{Ti,TiHe}.

\par

\subsection{Spaces of quasi-periodic functions and
distributions}\label{subsec2.1}

\par

Since quasi-periodic functions depend on the phase space
variable $(x,\xi )\in \rr {2d}$, it is suitable that the
Gevrey regularity with respect to $x\rr d$ for such functions
might be different to the Gevrey regularity with respect to $\xi \in \rr d$.
We therefore consider two parameters analogies of $\maclE ^s$ and
$\maclE ^{s;0}$, where the parameter $s$ is replaced by the pair
$\sigma, s$. More preceisely,
for any compact $K\subseteq \rr {2d}$ and $s,\sigma \ge 0$,
$\maclE ^{\sigma ,s}(K)$ ($\maclE ^{\sigma ,s;0}(K)$)
is the set of all $F\in C^\infty (K)$ such that
$$
\sup _{\alpha ,\beta \in \nn d}\sup _{x,\xi \in \rr d}
\left (
\frac {|\partial _x^\alpha \partial _\xi F(x,\xi )}
{h^{|\alpha +\beta |}\alpha !^\sigma \beta !^s}
\right )
$$
is finite for some $h>0$ (for every $h>0$). The two parameter
Gevrey classes,
$\maclE ^{\sigma ,s}(\rr d)$ and $\maclE ^{\sigma ,s;0}(\rr d)$,
are the projective limits of
$\maclE ^{\sigma ,s}(K_j)$ respective
$\maclE ^{\sigma ,s;0}(K_j)$, when $\{ K_j\} _{j\ge 1}$
is an exhausted set of compact subsets of $\rr {2d}$. Furthermore we let
\begin{alignat*}{3}
& \maclE ^{\sigma ,s;0}_{Z,E}(\rr {2d}), &
\qquad
& \maclE ^{\sigma ,s}_{Z,E}(\rr {2d}), &
\qquad
& C^\infty _{Z,E}(\rr {2d}),
\\[1ex]
& \mascS '_{Z,E}(\rr {2d}), &
\qquad
& (\maclE ^{\sigma ,s}_{Z,E})'(\rr {2d}), &
\qquad 
& (\maclE ^{\sigma ,s;0}_{Z,E})'(\rr {2d})
\intertext{be the set of all quasi-periodic elements in}
&\maclE ^{\sigma ,s;0}(\rr {2d}), &
\qquad
& \maclE ^{\sigma ,s}(\rr {2d}), &
\qquad
& C^\infty (\rr {2d}),
\\[1ex]
& \mascS ' (\rr {2d}), &
\qquad
& (\maclS _{s,\sigma}^{\sigma ,s})'(\rr {2d}), &
\qquad
& (\Sigma _{s,\sigma}^{\sigma ,s})'(\rr {2d}),
\end{alignat*}
respectively, with respect to the ordered
basis $E$ on $\rr d$. For conveniency we also set
$$
\maclE ^{\sigma ,s;0}_{Z}=\maclE ^{\sigma ,s;0}_{Z,E} \quad
\text{and}\quad \maclE ^{\sigma ,s}_{Z}=\maclE ^{\sigma ,s}_{Z,E},
$$
when $E$ is the standard basis of $\rr d$.

\par

Next we introduce spaces of quasi-periodic functions and distributions
which correspond to Lebesgue spaces and modulation spaces. We let
$L^p_{Z,E}(\rr {2d})$ for $p\in (0,\infty ]$ be the set of all quasi-periodic
measurable functions $F$ on $\rr {2d}$ with respect to the ordered
basis $E$ such that
$$
\nm F{L^p_{Z,E}}\equiv |Q_{E'}|^{-\frac 1p}
\nm F{L^p(Q _{E\times E'})}
$$
is finite. Evidently, we may identify $L^p_{Z,E}(\rr {2d})$ by
$L^p(Q _{E\times E'})$, and the scalar product on
$L^2_{Z,E}(\rr {2d})$ is given by
\begin{equation}\label{Eq:QuasiPerScalarProd}
(F,G)_{Z,E}\equiv (F,G)_{L^2_{Z,E}(\rr {2d})}
=
|Q_{E'}|^{-1}(F,G)_{L^2(Q_{E\times E'})}
\end{equation}
when $F,G\in L^2_{Z,E}(\rr {2d})$.

\par

Let $\mabfp ,\mabfr \in (0,\infty ]^{2d}$, $\omega \in \mascP _E(\rr {4d})$
and $\Phi \in \Sigma _1(\rr {2d})\setminus 0$ be fixed, and let $E_0$ be
an ordered basis in $\rr {d}$. Then set
\begin{align}
H_{1,F,\omega}(x,\xi ,y,\eta ) 
&=
|V_\Phi F(x,\xi ,\eta ,y)\omega (x,\xi ,\eta ,y)|,\label{Eq:STFTZakPartNorm1}
\intertext{and}
H_{2,F,\omega ,E,E_0,\mabfp}(x,\xi )
&\equiv
\nm {H_{1,F,\omega}(x,\xi ,\cdo )}{L^{\mabfp}_{E\times E_0'}(\rr {2d})},
\label{Eq:STFTZakPartNorm2}
\end{align}
when $F$ is a quasi-periodic Gelfand-Shilov
distribution with respect to the ordered basis $E$.
We let $W_{Z,E,E_0,(\omega )}^{\mabfr , \mabfp}(\rr {2d})$ be the set of all
quasi-periodic Gelfand-Shilov distributions $F$ with respect to the ordered
basis $E$ such that
$$
\nm F{W_{Z,E,E_0,(\omega )}^{\mabfr , \mabfp}}
\equiv
\nm {H_{2,F,\omega ,E,E_0,\mabfp}}{L^{\mabfr}_{E\times E'}(Q _{E\times E'})}
$$
is finite. We also let the topology of $W_{Z,E,E_0,(\omega )}^{\mabfr , \mabfp}(\rr {2d})$
be induced by the quasi-norm $\nm \cdo{W_{Z,E,E_0,(\omega )}^{\mabfr , \mabfp}}$.
Usually we assume that $\omega$ is given by
\begin{equation}\label{Eq:ZakModulationWeight}
\omega (x,\xi ,\eta ,y)=\omega _0(x-y,\eta ),
\end{equation}
for some $\omega _0\in \mascP _E(\rr {2d})$.

\par

\subsection{The Zak transform on test function spaces
and their distribution spaces}

\par

For the classical spaces $\mascS (\rr d)$ and its distribution
space $\mascS '(\rr d)$ we have the following.

\par

\begin{thm}\label{Thm:ZakClassical}
Let $E$ be an ordered basis of $\rr d$. Then the
following is true:
\begin{enumerate}
\item The operator $Z_E$ is a homeomorphism from
$\mascS (\rr d)$ to $C^\infty _{Z,E}(\rr {2d})$;

\vrum

\item The operator $Z_E$ from $\mascS (\rr d)$
to $C^\infty (\rr {2d})$ is uniquely extendable to a
homeomorphism from $\mascS '(\rr d)$ to $\mascS '_{Z,E}(\rr {2d})$.
\end{enumerate}
\end{thm}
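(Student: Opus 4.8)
The plan is to reduce the general basis $E$ to the standard basis via the conjugation formula \eqref{Eq:ZakTransfers}, namely $Z_Ef(x,\xi)=(Z_1f_E)(T_E^{-1}x,T_E^*\xi)$ with $f_E=f\circ T_E$. Since $f\mapsto f_E$ is a homeomorphism on $\mascS(\rr d)$ and on $\mascS'(\rr d)$, and since the linear change of variables $(x,\xi)\mapsto (T_E^{-1}x,T_E^*\xi)$ is a homeomorphism on $C^\infty(\rr{2d})$ and on $\mascS'(\rr{2d})$ which carries quasi-periodicity of order $E$ to quasi-periodicity of order equal to the standard basis (one checks this directly from \eqref{Eq:QuasiPer} using $T_{E'}=2\pi(T_E^{-1})^t$), it suffices to prove both assertions for $Z_1$. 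Part (1) for $Z_1$ is exactly Proposition \ref{Prop:ZakTransfBasicMaps}(2) (equivalently \cite[Theorem 8.2.5]{Gc2}), so only part (2) requires real work.

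For part (2) with $E$ standard, the strategy is a duality/transposition argument. First I would fix the natural sesquilinear pairing between $\mascS(\rr{2d})$-functions supported (or rather, integrated) over the fundamental domain $Q_{d,1}\times Q_{d,2\pi}$ and quasi-periodic distributions, and record the adjoint identity for $Z_1$: for $f\in\mascS(\rr d)$ and $g\in\mascS(\rr d)$ one has a Parseval-type relation
\begin{equation}
(Z_1 f, Z_1 g)_{L^2(Q_{d,1}\times Q_{d,2\pi})} = (2\pi)^d (f,g)_{L^2(\rr d)},
\end{equation}
which follows from \eqref{Eq:ZakonL2} by polarization. More useful is to identify the transpose of $Z_1\colon\mascS(\rr d)\to C^\infty(\rr{2d})_{\mathrm{q.p.}}$: integrating a test function $G$ against $Z_1 f$ over the fundamental domain and unfolding the sum defining $Z_1$ shows that $G\mapsto \big(x\mapsto \sum_{\kappa}\text{(suitable integral of }G)\big)$ is, up to normalisation, the inverse Zak transform composed with restriction; in any case the transpose of $Z_1$ maps the space of quasi-periodic test functions onto $\mascS(\rr d)$ continuously and bijectively, by part (1) applied to the inverse. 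Then I would \emph{define} $Z_1$ on $\mascS'(\rr d)$ by $\langle Z_1 f, G\rangle \equiv \langle f, (Z_1)^{t} G\rangle$ for $G$ a quasi-periodic test function, check it is well-defined (independence of representative of $G$ modulo the quasi-periodicity relations, using \eqref{Eq:QuasiPer}), agrees with the classical $Z_1$ on $\mascS(\rr d)$, and lands in the space of quasi-periodic elements of $\mascS'(\rr{2d})$. Continuity of $Z_1$ and of its inverse then follows from the open mapping theorem together with the corresponding continuity statements for $(Z_1)^t$ coming from part (1), since $\mascS$ and $\mascS'$ are reflexive Fréchet/DF spaces.

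The main obstacle I expect is the bookkeeping around quasi-periodic distributions: making precise the duality between the space of quasi-periodic elements of $\mascS'(\rr{2d})$ and the space of quasi-periodic test functions, i.e. specifying against which test objects a quasi-periodic distribution is paired and over which domain, and verifying that the pairing used to define $Z_1 f$ does not depend on the (non-unique) extension of data from the fundamental domain. One clean way to sidestep part of this is to instead deduce part (2) directly from the modulation-space characterisation: by Theorem \ref{Thm:ZakModulation} (or \eqref{Eq:ZakModIntrRel}), $Z_1$ is a homeomorphism from $M^p(\rr d)$ onto the quasi-periodic elements of $M^{\infty,p}(\rr{2d})$ for each $p\in(0,\infty]$, and since $\mascS'=\bigcup_{p}M^{p}_{(v_{-N})}$ and $\mascS=\bigcap M^{p}_{(v_N)}$ as locally convex inductive/projective limits over weighted modulation spaces (a standard fact), while quasi-periodicity is preserved at each level, one obtains part (2) by passing to the limit. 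Whichever route is taken, the content is routine once the quasi-periodic duality is set up; I would write the transposition argument as the primary proof and remark on the modulation-space route as an alternative.
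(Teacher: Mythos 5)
Your reduction to the standard basis and your treatment of part (1) match the paper (which disposes of (1) by citing \cite[Theorem 8.2.5]{Gc2} via Proposition \ref{Prop:ZakTransfBasicMaps}), and your idea of extending $Z_1$ to $\mascS '(\rr d)$ by transposition is also the paper's mechanism: in the proof of Theorem \ref{Thm:ZakDist}, which the paper instructs the reader to imitate, the extension is defined through the explicit, absolutely convergent pairing $\scal {Z_1 f}{\Phi}=\sum _{j\in \zz d}(\check f *\Phi _2(\cdo ,j))(j)$ with $\Phi _2=\mascF _2^{-1}\Phi$ and $\Phi$ a test function on $\rr {2d}$, which is precisely $\scal f{(Z_1)^t\Phi}$. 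Up to that point your proposal is sound.

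The gap is in surjectivity, which is the real content of part (2). Your duality argument pairs $Z_1 f$ against ``quasi-periodic test functions'', but the only quasi-periodic element of $\mascS (\rr {2d})$ is $0$, since quasi-periodic functions do not decay; so the claim that $(Z_1)^t$ restricts to a continuous bijection from a space of quasi-periodic test functions onto $\mascS (\rr d)$ has no content as literally stated, and the transpose acting on all of $\mascS (\rr {2d})$, namely $((Z_1)^t\Phi )(x)=\sum _{j}(\mascF _2^{-1}\Phi )(x+j,j)$, is surjective but far from injective. To make your route work you would first have to identify the quasi-periodic elements of $\mascS '(\rr {2d})$ with the dual of the quasi-periodic elements of $C^\infty (\rr {2d})$ under a fundamental-domain pairing (equivalently, with the dual of the quotient of $\mascS (\rr {2d})$ by the annihilator of the quasi-periodic distributions); that structural identification is essentially equivalent to the theorem itself, and you defer it as ``bookkeeping''. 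The paper instead proves surjectivity constructively: given a quasi-periodic $F$, it forms the $2\pi$-periodic distributions $g_\fy =\scal F{\fy \otimes \cdo}$ for $\fy \in \mascS (\rr d)$, shows that $\fy \mapsto c(g_\fy ,0)$ defines an element $f\in \mascS '(\rr d)$, uses the quasi-periodicity of $F$ to obtain $c(g_\fy ,k)=\scal {f(\cdo -k)}{\fy}$, and concludes $F=Z_1 f$ from the tensor-product uniqueness of Lemma \ref{Lemma:GSTensorProducts} (Remark \ref{Rem:TensorUniqueness}). Some such step is indispensable. Finally, your fallback via Theorem \ref{Thm:ZakModulation} is circular within this paper: the surjectivity assertion there, onto the quasi-periodic elements of $W^{\infty ,\mabfp}_{E_2,(\omega )}$, itself rests on the distributional bijectivity established in Theorems \ref{Thm:ZakClassical} and \ref{Thm:ZakDist}, whose proofs only yield norm equivalences without it.
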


\par

The assertion (1) in Theorem \ref{Thm:ZakClassical}
follows from \eqref{Eq:ZakTransfers} and
\cite[Theorem 8.2.5]{Gc2},
and (2) in the same theorem follows by similar arguments
as in the proof of Theorem \ref{Thm:ZakDist} below.
The verifications of Theorem \ref{Thm:ZakClassical}
are therefore left for the reader.

\par

The analogous result of the previous theorem for 
Gelfand-Shilov functions and their distributions, are given
in Theorems \ref{Thm:ZakTestFunctions} and \ref{Thm:ZakDist} below.

\par

\begin{thm}\label{Thm:ZakTestFunctions}
Let $s,\sigma >0$ and $E$ be an ordered basis.
Then the operator $Z_E$ from $\mascS (\rr d)$ to
$C^\infty (\rr {2d})$
restricts to a homeomorphism from $\maclS _s^\sigma (\rr d)$
to $\maclE ^{\sigma ,s}_{Z,E}(\rr {2d})$.

\par

The same holds true with $\Sigma _s^\sigma (\rr d)$
and $\maclE ^{\sigma ,s;0}_{Z,E}(\rr {2d})$
in place of $\maclS _s^\sigma (\rr d)$
and $\maclE ^{\sigma ,s}_{Z,E}(\rr {2d})$, respectively
at each occurrence.
\end{thm}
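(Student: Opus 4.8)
The plan is to move between the Gelfand--Shilov seminorms \eqref{gfseminorm} of $f$ and the Gevrey seminorms \eqref{e2} of $Z_Ef$ on one period parallelepiped, using that a quasi-periodic object is determined by its restriction to such a period together with the classical duality between the Gevrey order of a periodic function and the decay rate of its Fourier coefficients. First I would reduce to the standard basis: by \eqref{Eq:ZakTransfers} we have $Z_Ef(x,\xi)=(Z_1f_E)(T_E^{-1}x,T_E^*\xi)$ with $f_E=f\circ T_E$, and since $|T_E^{-1}x|\asymp|x|$ the substitution $f\mapsto f_E$ is a topological isomorphism of $\maclS_s^\sigma(\rr d)$ (and of $\Sigma_s^\sigma(\rr d)$), while the linear change of variables in the phase space is a topological isomorphism of the Gevrey class $\maclE_{\sigma,s}(\rr{2d})$ which turns quasi-periodicity of order $E$ into quasi-periodicity of order of the standard basis. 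Hence it suffices to treat $Z_1$ and the lattice $\zz d$.

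For the forward statement, note $\maclS_s^\sigma(\rr d)\hookrightarrow \mascS(\rr d)$, so $Z_1f\in C^\infty(\rr{2d})$ and is quasi-periodic by \eqref{Eq:DefZak}. For the Gevrey estimates I would use the elementary consequence of \eqref{gfseminorm} that $f\in\maclS_s^\sigma$ implies $|\partial^\alpha f(x)|\lesssim h^{|\alpha|}\alpha!^\sigma e^{-r|x|^{1/s}}$ for some (in the Beurling case, every) $h,r>0$, obtained by optimizing $|x^\gamma\partial^\alpha f(x)|\le Ch^{|\gamma|+|\alpha|}\gamma!^s\alpha!^\sigma$ in $\gamma$. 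Differentiating the defining series term by term and using that on a compact set $K$ one has $|x-j|^{1/s}\ge c_K|j|^{1/s}-C_K$, the sum $\sum_{j\in\zz d}|(\partial^\alpha f)(x-j)|\,|j|^{|\beta|}$ is dominated by $h^{|\alpha|}\alpha!^\sigma$ times $\sum_j|j|^{|\beta|}e^{-c|j|^{1/s}}\lesssim_K h_K^{|\beta|}\beta!^s$ (because $\sup_{t\ge 0}t^{|\beta|}e^{-ct^{1/s}}\asymp h_K^{|\beta|}\beta!^s$). This gives $|\partial_x^\alpha\partial_\xi^\beta Z_1f|\lesssim_K h_K^{|\alpha|+|\beta|}\alpha!^\sigma\beta!^s$ on every $K$, i.e. $Z_1f\in\maclE_{\sigma,s}(\rr{2d})$, with the dependence on the source seminorms needed for continuity of $Z_1$ into the subspace of quasi-periodic elements; the Beurling variant is identical with "some" replaced by "every" throughout.

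For surjectivity and continuity of the inverse, let $F$ be quasi-periodic of order $E$ with $F\in\maclE_{\sigma,s}(\rr{2d})$. Then $F\in C^\infty(\rr{2d})$ and is quasi-periodic, so by Proposition \ref{Prop:ZakTransfBasicMaps}(2) there is a unique $f\in\mascS(\rr d)$ with $Z_1f=F$, and it remains to show $f\in\maclS_s^\sigma$ with seminorms controlled by those of $F$. The key observation is that differentiating $F=\sum_j f(\cdot-j)e^{i\scal j\cdot}$ in $x$ shows that, for fixed $x$ in the period $\kappa(E)$, the number $(\partial^\alpha f)(x-j)$ is exactly the $j$-th Fourier coefficient in $\xi$ of the $\Lambda_E'$-periodic function $\xi\mapsto\partial_x^\alpha F(x,\xi)$. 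Estimating this coefficient by repeated integration by parts in $\xi$ and inserting the bound $|\partial_x^\alpha\partial_\xi^\beta F(x,\xi)|\le \|F\|_{\kappa(E)\times\kappa(E'),h,(\sigma,s)}\,h^{|\alpha|+|\beta|}\alpha!^\sigma\beta!^s$ (valid uniformly for $x$ in the compact period) yields $|(\partial^\alpha f)(x-j)|\lesssim h^{|\alpha|}\alpha!^\sigma e^{-r|j|^{1/s}}$ for $x\in\kappa(E)$ and $j\in\zz d$. Writing an arbitrary $y\in\rr d$ as $y=x-j$ with $x\in\kappa(E)$ and $|j|\asymp|y|$, and using $\sup_t t^{|\gamma|}e^{-r't^{1/s}}\lesssim h'^{|\gamma|}\gamma!^s$ once more, this produces the mixed bound $|y^\gamma\partial^\alpha f(y)|\lesssim h''^{|\gamma|+|\alpha|}\gamma!^s\alpha!^\sigma$, i.e. $f\in\maclS_s^\sigma$, and the estimate is linear in a single seminorm of $F$, giving continuity of $Z_1^{-1}$.

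The step I expect to be the main obstacle is precisely this inverse direction: one has to combine the regularity of $F$ in $x$ (which yields the factor $\alpha!^\sigma$, i.e. the $\sigma$-Gevrey order of $f$) with the regularity of $F$ in $\xi$ (which, through the periodic Fourier-coefficient/Gevrey duality, yields the decay $e^{-r|y|^{1/s}}$ and hence the factor $\gamma!^s$) into the single Gelfand--Shilov estimate, while at every stage reducing a supremum over $\rr{2d}$ to one over a compact period via quasi-periodicity; careful bookkeeping of the quantifiers "for some $h$" versus "for every $h$" then handles the Roumieu and Beurling cases simultaneously, with $\Sigma_s^\sigma(\rr d)$ and the corresponding Beurling Gevrey class of order $(\sigma,s)$ in place of $\maclS_s^\sigma(\rr d)$ and $\maclE_{\sigma,s}(\rr{2d})$. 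An alternative to the last two paragraphs is to invoke the short-time Fourier transform characterization of Proposition \ref{stftGelfand2} on $\rr d$ together with a corresponding characterization of the quasi-periodic Gevrey class, but the direct computation sketched above appears shortest.
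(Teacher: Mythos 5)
Your proposal is correct and follows essentially the same route as the paper: reduction to the standard basis via \eqref{Eq:ZakTransfers}, term-by-term differentiation of the series combined with the elementary bound $\sup_t t^{\beta}e^{-rt^{1/s}}\lesssim h^{\beta}\beta !^s$ (the paper's Lemma \ref{Lemma:ZakTestFunctions}) for the forward direction, and identification of $(\partial ^\alpha f)(x-j)$ as a Fourier coefficient of $\xi \mapsto \partial _x^\alpha F(x,\xi )$ followed by integration by parts for the inverse. The only cosmetic difference is that in the inverse direction you pass through the intermediate exponential decay $e^{-r|j|^{1/s}}$ and convert back, whereas the paper reads off the mixed bound $|k^\alpha \partial _x^\beta f(x-k)|\lesssim h^{|\alpha +\beta |}\alpha !^s\beta !^\sigma$ directly from the integrated-by-parts formula.
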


\par

By the previous result and the facts that $\maclS _s^\sigma (\rr d)$
is trivially equal to $\{ 0\}$ when $s+\sigma <1$, and $\Sigma _s^\sigma (\rr d)$
is trivial when $s+\sigma <1$ or $s=\sigma =\frac 12$, we get the
following.

\par

\begin{cor}\label{Cor:ZakTestFunctions}
Let $s,\sigma >0$ and $E$ be an ordered basis.
Then the following is true:
\begin{enumerate}
\item if $s+\sigma <1$ and
$F\in \maclE ^{\sigma ,s}_{Z,E}(\rr {2d})$, then $F=0$;

\vrum

\item if $s+\sigma <1$ or $s=\sigma =\frac 12$ and
$F\in \maclE ^{\sigma ,s;0}_{Z,E}(\rr {2d})$, then $F=0$.
\end{enumerate}
\end{cor}

\par

\begin{rem}\label{Rem:ZakTestFunctions}
Let $\sigma \ge 0$, $E$ be a basis of $\rr d$ and let
$f$ be an $E$-periodic distribution on $\rr d$.
Then $f\in \maclE ^\sigma (\rr d)$, if and only if
its Fourier coefficients $c(f,\alpha )$ in
\eqref{Eq:Expan2} satisfies
$$
|c(f,\alpha )|\lesssim e^{-r|\alpha |^{\frac 1\sigma}}
$$
for some $r>0$. In particular, $\maclE ^\sigma _E(\rr d)\neq \{ 0\}$
for every $\sigma \ge 0$ (cf. \cite{Pil2,ToNa}).
Consequently, the conclusions in Corollary
\ref{Cor:ZakTestFunctions} are not true
for periodic functions in place of quasi-periodic
functions. 
\end{rem}

\par

\begin{thm}\label{Thm:ZakDist}
Let $s,\sigma >0$ and
$E$ be an ordered basis of $\rr d$. Then the
operator $Z_E$ from $\mascS (\rr d)$ to
$C^\infty (\rr {2d})$
extends uniquely to a homeomorphism from $(\maclS
_s^\sigma)'(\rr d)$
to $(\maclE ^{\sigma ,s}_{Z,E})'(\rr {2d})$.

\par

The same holds true with $(\Sigma _s^\sigma)'(\rr d)$
and $(\maclE ^{\sigma ,s;0}_{Z,E})'(\rr {2d})$
in place of $(\maclS _s^\sigma)'(\rr d)$
and $(\maclE ^{\sigma ,s}_{Z,E})'(\rr {2d})$, respectively
at each occurrence.
\end{thm}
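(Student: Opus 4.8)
The plan is to deduce Theorem \ref{Thm:ZakDist} from Theorem \ref{Thm:ZakTestFunctions} by duality, after identifying the correct pairing. First I would use the reduction \eqref{Eq:ZakTransfers}, $Z_Ef(x,\xi)=(Z_1f_E)(T_E^{-1}x,T_E^*\xi)$ with $f_E=f\circ T_E$, together with the fact that linear changes of variables are homeomorphisms on all the Gelfand-Shilov spaces and their duals, to reduce everything to the case $E$ being the standard basis, i.e. to $Z_1$. Then I would recall the basic formal duality identity for the Zak transform: for $f,\phi\in\mascS(\rr d)$ one has
\begin{equation*}
\iint_{\kappa(E\times E')}(Z_1f)(x,\xi)\overline{(Z_1\phi)(x,\xi)}\,dxd\xi
= |\kappa(E')|\,(f,\phi)_{L^2(\rr d)},
\end{equation*}
which is just the Parseval-type relation \eqref{Eq:ZakonL2}$'$ polarized; more relevant is the non-conjugated bilinear version
\begin{equation*}
\iint_{\kappa(E\times E')}(Z_1f)(x,\xi)(Z_1g)(x,-\xi)\,dxd\xi = c\,(f,g)
\end{equation*}
(with $c$ depending only on $E'$), valid first for test functions by a direct computation using the definition \eqref{Eq:DefZak} and interchanging sum and integral.

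Next I would set up the dual pairing on the quasi-periodic side. A quasi-periodic element $F$ of order $E$ in $(\maclS_{s,\sigma}^{\sigma,s})'(\rr{2d})$ should be tested against quasi-periodic elements of $\maclE_{\sigma,s}(\rr{2d})$ of the "conjugate" quasi-periodicity, via integration over the fundamental domain $\kappa(E\times E')$; this pairing is well defined precisely because the product of a quasi-periodic distribution and a quasi-periodic test function (with matching orders) is genuinely periodic, hence has a well-defined mean over the fundamental cell. I would verify that this pairing is non-degenerate and continuous, and that under it the space of quasi-periodic elements of order $E$ in $(\maclS_{s,\sigma}^{\sigma,s})'(\rr{2d})$ is exactly the topological dual of the space of quasi-periodic elements of order $E$ in $\maclE_{\sigma,s}(\rr{2d})$ (this uses the standard fact that periodic Gelfand-Shilov distributions are identified with their Fourier-coefficient sequences of at most polynomial-times-exponential growth, as recorded in Remark \ref{Rem:PerDistr} and the discussion around \eqref{Eq:PerDistAction} and \cite{ToNa}). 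With both duals identified, the displayed bilinear identity shows that $Z_E$ intertwines the $\mascS/\mascS'$ pairing on $\rr d$ with the fundamental-cell pairing on $\rr{2d}$; hence $Z_E$ extended to $(\maclS_s^\sigma)'(\rr d)$ is, up to the constant $c$, the adjoint of the inverse of the homeomorphism $Z_E\colon\maclS_\sigma^s(\rr d)\to\{\text{quasi-per.\ of order }E'\text{ in }\maclE_{s,\sigma}(\rr{2d})\}$ furnished by Theorem \ref{Thm:ZakTestFunctions}. Being the adjoint of a homeomorphism between (DF)- or Fréchet-type spaces, it is itself a homeomorphism onto the claimed target.

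Concretely the steps are: (i) reduce to $Z_1$ via \eqref{Eq:ZakTransfers}; (ii) establish the bilinear pairing identity for test functions and note it extends by continuity once the extension of $Z_E$ to distributions is defined through it; (iii) define $Z_E f$ for $f\in(\maclS_s^\sigma)'(\rr d)$ by $\langle Z_Ef,\psi\rangle_{\kappa(E\times E')} := c^{-1}\langle f,\,Z_{E}^{-1}\check\psi\rangle$ for quasi-periodic test $\psi$ (with $\check{}$ the appropriate reflection in $\xi$), using Theorem \ref{Thm:ZakTestFunctions} to make sense of $Z_E^{-1}\check\psi\in\maclS_s^\sigma(\rr d)$; (iv) check this agrees with the original $Z_E$ on $\mascS(\rr d)$ and is the unique continuous extension, since $\mascS$ is dense in $(\maclS_s^\sigma)'$ by \eqref{GSembeddings}; (v) invoke the open mapping / closed range theorem for the involved quasi-Banach-limit topologies, or more simply observe that the inverse is constructed the same way from $Z_E^{-1}$, to conclude bicontinuity; (vi) repeat verbatim with $\Sigma$ in place of $\maclS$, only the inductive/projective roles of the limits being swapped, which does not affect the argument.

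The main obstacle I anticipate is step (iii)--(v): making the duality between quasi-periodic ultradistributions of order $E$ and quasi-periodic ultra-test-functions of the conjugate order completely rigorous, including the correct constant and the correct reflection, and verifying that the fundamental-cell pairing really realizes the strong dual with the right topology (so that "adjoint of a homeomorphism is a homeomorphism" applies). This is where one must be careful that multiplying a quasi-periodic distribution by a quasi-periodic test function of matching order yields an honestly $\Lambda_{E\times E'}$-periodic distribution whose action on the constant function $1$ (its mean) is well defined and continuous in both arguments — essentially the content of \eqref{Eq:PerDistAction} transported to the phase space. Once this bookkeeping is in place, the rest follows formally from Theorem \ref{Thm:ZakTestFunctions}.
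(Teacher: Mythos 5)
Your route is genuinely different from the paper's, and it contains one gap that is not mere bookkeeping. The load-bearing step in your plan is the identification, under the fundamental-cell pairing, of the quasi-periodic elements of $(\maclS _{s,\sigma}^{\sigma ,s})'(\rr {2d})$ with the strong dual of the space of ($\xi$-reflected) quasi-periodic elements of $\maclE _{\sigma ,s}(\rr {2d})$. You flag this as the "main obstacle" and then defer it, but it is essentially equivalent in content to the surjectivity assertion you are trying to prove: one must show both that a quasi-periodic ultradistribution is \emph{determined} by its cell-pairing against quasi-periodic test functions (which needs a quasi-periodization operator of the type $T_\chi$ in Proposition \ref{Prop:QPerGSDist}, together with a density argument for its range), and that every continuous functional on the quasi-periodic test space extends to a quasi-periodic element of $(\maclS _{s,\sigma}^{\sigma ,s})'(\rr {2d})$ carrying the subspace topology. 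Until that lemma is proved, "adjoint of a homeomorphism is a homeomorphism" lands you in the wrong target space. There is also a small index slip: since the bilinear pairing for $(\maclS _s^\sigma )'(\rr d)$ is against $\maclS _s^\sigma (\rr d)$, the relevant test-function homeomorphism is $Z_E\colon \maclS _s^\sigma (\rr d)\to \{ \text{quasi-periodic of order } E \text{ in } \maclE _{\sigma ,s}(\rr {2d})\}$ composed with the $\xi$-reflection, not $Z_E$ on $\maclS _\sigma ^s(\rr d)$; your step (iii) is consistent with the former, your step preceding it with the latter.

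For comparison, the paper avoids the duality lemma altogether. For the extension it rewrites $\scal {Z_1f}{\Phi}$ directly as the absolutely convergent series $\sum _{j\in \zz d}(\check f*\Phi _2(\cdo ,j))(j)$ with $\Phi _2=\mascF _2^{-1}\Phi$ (formula \eqref{Eq:ZakDistFormula}), and the Gelfand-Shilov decay of $\Phi _2$ makes this meaningful and continuous for arbitrary $f\in (\maclS _s^\sigma )'(\rr d)$; uniqueness follows by density of $\mascS (\rr d)$. For surjectivity it argues by explicit reconstruction: given quasi-periodic $F$, the slices $g_\fy =\scal F{\fy \otimes \cdo}$ are $2\pi$-periodic, their zeroth Fourier coefficients define a distribution $f$ via $c(g_\fy ,0)=\scal f\fy$, quasi-periodicity gives $c(g_\fy ,k)=\scal {f(\cdo -k)}\fy$, and the tensor-product uniqueness of Lemma \ref{Lemma:GSTensorProducts} (Remark \ref{Rem:TensorUniqueness}) forces $F=Z_1f$. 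Your duality scheme would be cleaner if the quasi-periodic duality were available as a black box, but as written the decisive structural fact is assumed rather than established, whereas the paper's construction proves it en route.
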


\par

We need the following lemma for the proof of Theorem
\ref{Thm:ZakTestFunctions}.

\par

\begin{lemma}\label{Lemma:ZakTestFunctions}
Let $r,s>0$. Then there is a constant $h>0$ such that
$$
|t|^\beta e^{-r\, |t|^{\frac 1s}} \lesssim h^\beta \beta !^s,
\qquad t\in \mathbf R,\ \beta \in \mathbf N.
$$
\end{lemma}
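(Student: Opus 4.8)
The plan is to reduce the claim to a one-variable maximisation and then invoke the elementary Stirling-type bound $\beta^\beta e^{-\beta}\le\beta!$. Since both sides of the asserted inequality depend on $t$ only through $|t|$, it suffices to bound $g(t)=t^\beta e^{-r\,t^{1/s}}$ for $t\ge 0$. The case $\beta=0$ is trivial, because then the left-hand side is at most $1=h^0\,\beta!^s$ for any $h>0$; so from now on I would assume $\beta\ge 1$.

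First I would substitute $u=t^{1/s}$, so that $g=u^{s\beta}e^{-ru}$, and study the auxiliary function $\psi(u)=u^{s\beta}e^{-ru}$ on $[0,\infty)$. The function $\psi$ is continuous, vanishes at $u=0$ and as $u\to\infty$, and a direct computation of $\psi'$ shows that it has a unique critical point in $(0,\infty)$, namely $u_\ast=s\beta/r$; hence $\psi$ attains its maximum there, with
$$
\psi(u_\ast)=\Big(\frac{s\beta}{r}\Big)^{s\beta}e^{-s\beta}
=\Big(\frac{s}{r}\Big)^{s\beta}\big(\beta^{\beta}e^{-\beta}\big)^{s}.
$$

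Then the elementary inequality $\beta^\beta e^{-\beta}\le\beta!$, which follows for instance from $e^{\beta}=\sum_{k\ge 0}\beta^{k}/k!\ge\beta^{\beta}/\beta!$, yields
$$
\sup_{t\ge 0}\,|t|^\beta e^{-r\,|t|^{1/s}}=\psi(u_\ast)
\le\Big(\frac{s}{r}\Big)^{s\beta}\beta!^{s}=h^{\beta}\,\beta!^{s},
\qquad h=\Big(\frac{s}{r}\Big)^{s},
$$
which is the desired estimate (in fact with implicit constant $1$).

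The argument is entirely routine; the only points requiring any care are the reduction to $t\ge 0$, the separate trivial treatment of $\beta=0$ (and of the convention $0^0=1$), and the choice of the Stirling-type lower bound for $\beta!$. None of these presents a genuine obstacle, so I do not expect any hard step here.
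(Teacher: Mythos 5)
Your argument is correct and complete: the reduction to $t\ge 0$, the trivial case $\beta=0$, the substitution $u=t^{1/s}$, the identification of the unique critical point $u_\ast=s\beta/r$ of $\psi(u)=u^{s\beta}e^{-ru}$, and the elementary bound $\beta^{\beta}e^{-\beta}\le\beta!$ all check out, and they give the inequality with the explicit constant $h=(s/r)^{s}$ and implicit constant $1$. The paper proves the lemma by the dual optimisation: it first replaces $\beta!^{s}$ by $\beta^{s\beta}$ via Stirling's formula (absorbing the error into $h^{\beta}$), then takes logarithms and, for \emph{fixed} $t$, maximises $\tau\mapsto -rt^{1/s}+\tau\log t-C\tau-s\tau\log\tau$ over a continuous exponent $\tau\ge 0$, finding a maximum of the form $t^{1/s}(-r+seh^{-1/s})$ which is nonpositive once $h$ is chosen large enough. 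So where you fix $\beta$ and maximise over $t$, the paper fixes $t$ and maximises over the exponent; both are one-variable calculus arguments combined with a Stirling-type estimate. Your version is slightly more economical, since it avoids the preliminary Stirling reduction and produces a clean closed-form value of $h$, while the paper's version treats $t$ and the exponent symmetrically as continuous parameters, which is why it passes through the intermediate claim $t^{\tau}e^{-rt^{1/s}}\lesssim h^{\tau}\tau^{s\tau}$ for all real $\tau\ge 0$.
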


\par

\begin{proof}
By reasons of symmetry and Stirling's formula, the result follows if we prove
$$
t^\tau e^{-r\, t^{\frac 1s}} \lesssim h^\tau \tau ^{s\tau},
\qquad 0\le t,\tau \in \mathbf R
$$
for some $h>0$. By taking the logaritm it follows that we need to prove that
for some constant $C>0$,
$$
g(\tau ) = -rt^{\frac 1s} + \tau \log t - C\tau -s\tau \log \tau 
$$
is bounded from above by a constant which is independent of $t,\tau >0$.

\par

By differentiation and analysing the sign of $g'(\tau)$, it follows that $g(\tau)$
has a global maximum for
$$
\tau _0=e^{-1-\frac Cs} t^{\frac 1s}
$$
with value
$$
g(\tau _0) = t^{\frac 1s}(-r +seh^{-\frac 1s})
$$
By choosing
$$
h>
\left (
\frac r{se}
\right )^s
$$
it follows that $g(\tau )$ is negative, giving the result.
\end{proof}

\par

\begin{proof}[Proof of Theorem \ref{Thm:ZakTestFunctions}]
Let $T_E$ be the same as in \eqref{Eq:ZakTransfers}.
Then the map $F(x,\xi )\mapsto F(T_E^{-1}x,T_E^*\xi )$
maps quasi-periodc elements of order $E$ to
quasi-periodic elements with respect to the
standard basis. Since $f\mapsto f\circ T$ maps
$E$-periodic elements to $1$-periodic functions,
it follows from these observations and
\eqref{Eq:ZakTransfers}
that it suffices to prove the result when $E$
is the standard basis.

\par

The assertion (1) is the same as Theorem 8.2.5 in \cite{Gc2}.

\par

In order to prove (2) we shall follow the proof of
Theorem 8.2.5 in \cite{Gc2}. In fact,
assume first that $f\in \maclS _s^\sigma (\rr d)$, $x\in  k_0+Q_{d,1}$ for some
fixed $k_0\in \zz d$, and let $F=Z_1 f$. Then
$$
|\partial _x^\alpha f(x-k)|\le C h^\alpha e^{-2r (|k_1|^{\frac 1s}+\cdots +|k_d|^{\frac 1s})}
\alpha !^\sigma , \quad x\in k_0+Q_{d,1},
$$
for some positive constant $C$ which only depends on $k_0$ and some positive constants
$h>0$ and $r>0$ which are independent of $x$, $k_0$, $k$ and $\alpha$.

\par

The series in \eqref{Eq:DefZak} is absolutely convergent together with all its derivatives.
This gives
\begin{multline*}
|(\partial _x^\alpha \partial _\xi ^\beta F)(x,\xi )| 
\le
\sum _{k\in \zz d}|f^{(\alpha )}(x-k)|
k^{\beta}
\\[1ex]
\lesssim
h_1^{|\alpha +\beta |}\sum _{k\in \zz d}
e^{-2r (|k_1|^{\frac 1s}+\cdots +|k_d|^{\frac 1s})}|k^\beta |
\\[1ex]
=
h_1^{|\alpha +\beta |}
\prod _{j=1}^d
\left (
\sum _{k_j\in \mathbf Z}e^{-r|k_j|^{\frac 1s}} \left (e^{-r|k_j|^{\frac 1s}} |k_j|^{\beta _j}\right )
\right ) ,
\end{multline*}
for some constant $h_1>0$. By Lemma \ref{Lemma:ZakTestFunctions}
it follows that
$$
e^{-r|k_j|^{\frac 1s}} |k_j|^{\beta _j} \lesssim h^{\beta _j}\beta _j!^s
$$
for some constant $h>0$. A combination of these estimates give
$$
|(\partial _x^\alpha \partial _\xi ^\beta F)(x,\xi )| 
\lesssim
h^{|\alpha +\beta |}\alpha !^\sigma \beta !^s,
$$
and it follows that $F\in \maclE ^{\sigma ,s}(\rr {2d})$.
This shows that
$Z_1$ is continuous from $\maclS _s^\sigma (\rr d)$
to $\maclE ^{\sigma ,s}_Z(\rr {2d})$.

\par

Next we show that any $F$
in $\maclE ^{\sigma ,s}_Z(\rr {2d})$
is the Zak transform of an element in
$\maclS _s^\sigma (\rr d)$. By Theorem 8.2.5
in \cite{Gc2} it follows that $F=Z_1 f$ when
$$
f(x) = \int _{Q_{d,2\pi}}F(x,\xi )\, d\xi .
$$
We need to prove that $f\in \maclS _s^\sigma (\rr d)$.

\par

Since $k\mapsto f(x-k)$ is the Fourier coefficient
of order $k$ for the function
$\xi \mapsto F(x,\xi)$, we have
$$
f(x-k)= \int _{Q_{d,2\pi}} F(x,\xi )e^{-i\scal k\xi}
\, d\xi .
$$
By applying the operator $k^\alpha
\partial _x^\beta$ and integrating by parts we get
\begin{multline*}
k^\alpha (\partial _x^\beta f)(x-k) = 
\int _{Q_{d,2\pi}} (\partial _x^\beta F)(x,\xi )
k^\alpha e^{-i\scal k\xi}\, d\xi 
\\[1ex]
= (-1)^{|\beta |}i^{|\alpha |}
\int _{Q_{d,2\pi}} (\partial _x^\beta
\partial _\xi ^\alpha F)(x,\xi )
e^{-i\scal k\xi}\, d\xi .
\end{multline*}
This gives
\begin{multline*}
\sup _{x\in \rr d} |x^\alpha f^{(\beta )}(x)|
\asymp
\sup _{k\in \zz d} \sup _{x\in Q_{d,\rho}}
|k^\alpha f^{(\beta )}(x-k)|
\\[1ex]
\lesssim
\nm{\partial _x^\beta \partial _\xi ^\alpha F}
{L^\infty (Q_{d,\rho}\times Q_{d,2\pi})}
\lesssim
h^{|\alpha +\beta |}\alpha !^s\beta !^\sigma ,
\end{multline*}
which is the same as $f\in \maclS _s^\sigma (\rr d)$.
This gives (2). The assertion (3) follows by similar
arguments and is left for the reader.
\end{proof}

\par

For the proof of Theorem \ref{Thm:ZakDist}
we need the following lemma on tensor product
of Gelfand-Shilov distributions.

\par

\begin{lemma}\label{Lemma:GSTensorProducts}
Let $s_j,\sigma _j>0$ and $f_j\in (\maclS _{s_j}^
{\sigma _j})'(\rr {d_j})$, $j=1,2$. Then there
is a unique $f\in
(\maclS _{s_1,s_2}^{\sigma _1,\sigma _2})'(\rr {d_1+d_2})$
such that
$$
\scal f{\fy _1\otimes \fy _2}
=\scal {f_1}{\fy _1}\scal {f_2}{\fy _2}
,\qquad \fy _j\in \maclS _{s_j}^{\sigma _j}(\rr {d_j}),\ j=1,2.
$$
Moreover, if $\fy \in \maclS _{s_1,s_2}
^{\sigma _1,\sigma _2}(\rr {d_1+d_2})$,
$$
\fy _1 (x_1)= \scal {f_2}{\fy (x_1,\cdo )}
\quad \text{and}\quad
\fy _2 (x_2)= \scal {f_1}{\fy (\cdo ,x_2)},
$$
then
$$
\scal f{\fy} = \scal {f_1}{\fy _1} = \scal {f_2}{\fy _2}.
$$

\par

The same holds true with $\Sigma _{s_j}^{\sigma _j}$,
$\Sigma _{s_1,s_2}
^{\sigma _1,\sigma _2}$, $(\Sigma _{s_j}^{\sigma _j})'$ and
$(\Sigma _{s_1,s_2}^{\sigma _1,\sigma _2})'$ in place of
$\maclS _{s_j}^{\sigma _j}$, $\maclS _{s_1,s_2}^{\sigma 
_1,\sigma _2}$,
$(\maclS _{s_j}^{\sigma _j})'$ and $(\maclS 
_{s_1,s_2}^{\sigma _1,\sigma _2})'$,
respectively, $j=1,2$.
\end{lemma}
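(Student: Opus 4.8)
The plan is to establish the tensor product as a distribution via a standard approach, mimicking the classical Schwartz kernel theorem construction but adapted to the Gelfand-Shilov setting, where the key technical tool is the nuclearity of these spaces together with the Fourier transform characterization \eqref{Eq:GSFtransfChar}$'$.

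First I would verify that the two auxiliary functions $\fy_1(x_1) = \scal{f_2}{\fy(x_1,\cdo)}$ and $\fy_2(x_2) = \scal{f_1}{\fy(\cdo,x_2)}$ are well-defined and belong to $\maclS_{s_1}^{\sigma_1}(\rr{d_1})$ and $\maclS_{s_2}^{\sigma_2}(\rr{d_2})$, respectively. For this I would use that $\fy\in\maclS_{s_1,s_2}^{\sigma_1,\sigma_2}(\rr{d_1+d_2})$ implies, via \eqref{Eq:GSFtransfChar}$'$, that $\fy$ and $\widehat\fy$ decay exponentially in the separated variables; hence $x_1\mapsto\fy(x_1,\cdo)$ is a smooth $\maclS_{s_2}^{\sigma_2}(\rr{d_2})$-valued map with controlled derivative growth, and applying the continuous functional $f_2$ preserves the decay estimates defining $\maclS_{s_1}^{\sigma_1}$. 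Differentiation under the distributional pairing is justified by the continuity of $f_2$ and the topology of $\maclS_{s_2}^{\sigma_2}$. I would then define $\scal f\fy \equiv \scal{f_1}{\fy_1}$ and separately check $\scal{f_1}{\fy_1}=\scal{f_2}{\fy_2}$; the latter is first verified when $\fy=\fy_1'\otimes\fy_2'$ is an elementary tensor, where both sides reduce to $\scal{f_1}{\fy_1'}\scal{f_2}{\fy_2'}$, and then extended to general $\fy$ by density of finite linear combinations of elementary tensors in $\maclS_{s_1,s_2}^{\sigma_1,\sigma_2}(\rr{d_1+d_2})$ (which follows from the nuclearity/tensor product structure of Gelfand-Shilov spaces, cf.\ the discussion around \eqref{GSembeddings}) together with the continuity in $\fy$ of both maps.

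Next I would establish continuity of $f$ as a linear functional, i.e.\ that $|\scal f\fy|\lesssim \nm\fy{\maclS_{s_1,s_2,h}^{\sigma_1,\sigma_2}}$ for a suitable $h>0$ in the Roumieu case (for every $h>0$ in the Beurling case). This follows by composing the continuity estimate for $f_1$ on $\maclS_{s_1}^{\sigma_1}(\rr{d_1})$ with the estimate $\nm{\fy_1}{\maclS_{s_1,h'}^{\sigma_1}}\lesssim\nm\fy{\maclS_{s_1,s_2,h}^{\sigma_1,\sigma_2}}$ obtained in the first step; the parameter bookkeeping between $h$ and $h'$ is routine and harmless since the inductive/projective limit topologies only care about existence/universality of such $h$. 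Uniqueness of $f$ is immediate, since any two distributions agreeing on all elementary tensors must agree on their dense span and hence everywhere by continuity. Finally, the Beurling-type assertion is obtained by repeating the argument verbatim with $\Sigma$ in place of $\maclS$ throughout, using that $\Sigma_{s_j}^{\sigma_j}$ is Fr\'echet and the analogous density and Fourier characterization statements hold.

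The main obstacle I anticipate is the first step: showing rigorously that $\fy_1\in\maclS_{s_1}^{\sigma_1}(\rr{d_1})$ with quantitative control of its defining seminorms in terms of those of $\fy$. This requires both (i) estimating $x_1^{\alpha_1}\partial_{x_1}^{\beta_1}\fy_1(x_1) = \scal{f_2}{\,x_1^{\alpha_1}\partial_{x_1}^{\beta_1}\fy(x_1,\cdo)}$ using the continuity of $f_2$ against a seminorm of $\fy(x_1,\cdo)$ in $\maclS_{s_2}^{\sigma_2}(\rr{d_2})$, which carries factors $h^{|\alpha_1+\beta_1|}\alpha_1!^{s_1}\beta_1!^{\sigma_1}$ from the joint estimate on $\fy$, and (ii) confirming the Fourier side decay of $\widehat{\fy_1}$, or equivalently working directly with the seminorm definition \eqref{gfseminorm}; the cleanest route is probably to use the equivalent characterization \eqref{Eq:GSFtransfChar} so that one only needs exponential pointwise bounds on $\fy_1$ and $\widehat{\fy_1}$, the latter following since $\widehat{\fy_1}(\xi_1) = \scal{f_2}{\widehat{\fy}(\xi_1,\cdo)|_{\text{partial}}}$ up to the appropriate partial Fourier transform and the decay of $\widehat\fy$ in $\xi_1$. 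Once this seminorm estimate is in hand, everything else is formal.
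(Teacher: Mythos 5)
The paper does not actually prove this lemma: it states that Lemma \ref{Lemma:GSTensorProducts} is ``essentially a restatement of Theorem 2.4 in \cite{Toft24}'' and omits the proof, with the uniqueness part attributed in Remark \ref{Rem:TensorUniqueness} to \cite[Lemma 2.3]{Toft24}. So your proposal cannot be matched against an in-paper argument; judged on its own, it follows the standard kernel-theorem-style route (partial pairing $\fy _1(x_1)=\scal {f_2}{\fy (x_1,\cdo )}$, seminorm estimates via the Fourier characterization \eqref{Eq:GSFtransfChar}$'$, verification on elementary tensors, extension by density and continuity), which is sound in outline and is presumably close to what \cite{Toft24} does.

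Two points deserve attention. First, your quantifiers for the continuity estimate are reversed: since $\maclS _s^\sigma$ carries the \emph{inductive} limit topology over $h$, a continuous functional must satisfy $|\scal f\fy |\le C_h\nm \fy {\maclS _{s_1,s_2,h}^{\sigma _1,\sigma _2}}$ for \emph{every} $h>0$ (with $h$-dependent constants), whereas for the Fr\'echet space $\Sigma _s^\sigma$ it suffices to have such a bound for \emph{some} $h>0$; you state the opposite. This is repairable but it is exactly the bookkeeping you dismiss as routine, so it should be done correctly. Second, the one genuinely nontrivial input you do not supply is the totality (equivalently, by Hahn--Banach, density of the span) of elementary tensors in $\maclS _{s_1,s_2}^{\sigma _1,\sigma _2}(\rr {d_1+d_2})$; you assert it ``follows from nuclearity'', but this is precisely the ingredient the paper itself outsources to \cite[Lemma 2.3]{Toft24}, and in the quasi-analytic range of parameters it is not an off-the-shelf fact (cut-off and mollification arguments are unavailable, and Hermite expansions only cover $s,\sigma \ge \frac 12$ directly). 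Without an argument or a citation for this density statement, the uniqueness claim and the passage from elementary tensors to general $\fy$ both rest on an unproven assertion.
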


Lemma \ref{Lemma:GSTensorProducts}
is essentially a restatement of Theorem 2.4
in \cite{Toft24}. The proof is therefore omitted.

\par

\begin{rem}\label{Rem:TensorUniqueness}
We notice that the uniqueness assertions in
Lemma \ref{Lemma:GSTensorProducts} is an
immediate consequence of \cite[Lemma 2.3]{Toft24}
which asserts that if
$f\in
(\maclS _{s_1,s_2}^{\sigma _1,\sigma _2})'(\rr {d_1+d_2})$
($f\in
(\Sigma _{s_1,s_2}^{\sigma _1,\sigma _2})'(\rr {d_1+d_2})$)
satisfies
$$
\scal f{\fy _1\otimes \fy _2}=0
$$
for every $\fy _j\in \maclS _{s_j}^{\sigma _j}(\rr {d_j})$
($\fy _j\in \Sigma _{s_j}^{\sigma _j}(\rr {d_j})$),
then $f=0$ (as an element in
$(\maclS _{s_1,s_2}^{\sigma _1,\sigma _2})'(\rr {d_1+d_2})$
($(\Sigma _{s_1,s_2}^{\sigma _1,\sigma _2})'(\rr {d_1+d_2})$)).
\end{rem}

\par

\begin{proof}[Proof of Theorem \ref{Thm:ZakDist}]
By similar arguments as in the proof of Theorem
\ref{Thm:ZakTestFunctions} we may assume that $E$ is the
standard basis for $\rr d$.

\par

We begin to prove (2). Let $\Phi \in \maclS 
_{s,\sigma}^{\sigma ,s}(\rr {2d})$. Then
\begin{gather*}
\mascF _2^{-1}\Phi \in \maclS _s^\sigma (\rr {2d}),\quad 
\mascF _1 (\mascF _2^{-1}\Phi ) \in \maclS _{\sigma ,s}^{s,\sigma}(\rr {2d}),
\\[1ex]
|\mascF _2^{-1}\Phi (x,y)|\lesssim e^{-r_0(|x|^{\frac 1s}+|y|^{\frac 1s})}
\quad \text{and}\quad
|\mascF _1\Phi (\xi ,\eta)|\lesssim e^{-r_0(|\xi |^{\frac 1\sigma }+|\eta |^{\frac 1\sigma })}
\end{gather*}

\par

If $f\in \mascS (\rr d)$, then
\begin{multline}\label{Eq:ZakDistFormula}
\scal {Z_1 f}\Phi = \iint _{\rr {2d}} \left (\sum _{j\in \zz d}f(x-j)e^{i\scal j\xi}
\right )\Phi (x,\xi )\, dxd\xi
\\[1ex]
=
\sum _{j\in \zz d} \left (
\int _{\rr d} f(x) (\mascF _2^{-1}\Phi )(x+j, j) 
\, dx
\right )
\\[1ex]
=
\sum _{j\in \zz d} (\check f *\Phi _2(\cdo ,j))(j),
\end{multline}
where $\Phi _2= \mascF _2^{-1}\Phi $ and $\check f(x)=f(-x)$.

\par

Assume instead that $f\in (\maclS _s^\sigma)'(\rr d)$ is 
arbitrary. We claim that the series
on the right-hand side of \eqref{Eq:ZakDistFormula} converges 
absolutely for every $\Phi$ as above.

\par

In fact, since $f\in (\maclS _s^\sigma)'(\rr d)$, we have
$$
|\scal f\phi |
\lesssim
\nm {e^{r|\cdo |^{\frac 1t}} \phi}{L^\infty}
+
\nm {e^{r|\cdo |^{\frac 1\sigma }} \widehat \phi}{L^\infty}
$$
for every $r>0$, giving that for some $r_0>0$ and $c\ge 1$
we have
\begin{multline*}
| (\check f *\Phi _2(\cdo ,y))(x) |
\lesssim
\nm {e^{r|\cdo |^{\frac 1s}} \Phi _2(x-\cdo ,y) }{L^\infty}
+
\nm {e^{r|\cdo |^{\frac 1\sigma }}
\widehat \Phi (\cdo ,y ) }{L^\infty}
\\[1ex]
\lesssim e^{rc|x|^{\frac 1s}-2r_0|y|^{\frac 1s}/c}.
\end{multline*}
Hence, if $r$ is chosen small enough and $x=y=j$, then
\begin{equation}\label{Eq:ConvGSEst}
| (\check f *\Phi _2(\cdo ,j ))(j) | \lesssim e^{-r_0| j |^{\frac 1s}/c},
\end{equation}
when $(\maclS _s^\sigma)'(\rr d)$. The absolutely
convergence of the series
of the right-hand side of \eqref{Eq:ZakDistFormula}
now follows from \eqref{Eq:ConvGSEst}.

\par

If $f\in (\maclS _s^\sigma)'(\rr d)$, then $Z_1 f$
is defined as the element in
$(\maclS _{s,\sigma}^{\sigma ,s})'(\rr {2d})$, given
by the right-hand side of
\eqref{Eq:ZakDistFormula}. The previous estimates
show that this
definition makes sense, and that the map
$f\mapsto Z_1 f$ is continuous
from from $(\maclS _s^\sigma)'(\rr d)$
to $(\maclE ^{\sigma ,s}_Z)'(\rr {2d})$.
By approximating elements in $(\maclS _s^\sigma )'(\rr d)$
by sequences of elements in $\mascS (\rr d)$, it also
follows that the continuous extension of $Z_1$
to such distribution is unique.

\par

We need to prove that any element in
$(\maclE ^{\sigma ,s}_Z) '(\rr {2d})$ is the Zak transform of an element in
$(\maclS _s^\sigma)'(\rr d)$. Therefore, let
$\fy \in \maclS _s^\sigma (\rr d)$, $F\in 
(\maclE ^{\sigma ,s}_Z)'(\rr {2d})$, and let 
$g_\fy\in (\maclS _\sigma ^s)'(\rr d)$ be defined
by
$$
\scal {g_\fy}{\psi} = \scal F{\fy \otimes \psi},
\qquad \psi \in \maclS _\sigma ^s (\rr d).
$$
Then $g_\fy$ is $2\pi$-periodic, and it follows from
Remark 2.3 and Proposition 2.5
in \cite{ToNa} that if $\phi \in \maclS _\sigma ^s
(\rr d)\setminus 0$, then
\begin{align*}
g_\fy &= \sum _{k\in \zz d}c(g_\fy ,k)e^{i\scal k\xi},
\intertext{where the series converges in
$(\maclS _\sigma ^s)'(\rr d)$, and}
c(g_\fy ,0)
&=
\frac {1}{(2\pi )^d\nm \phi{L^2}^2}\int _{Q_{d,2\pi}}
\left ( \int _{\rr d}
(V_\phi g_\fy )(\eta ,y)\widehat \phi (-y)e^{i\scal y\eta}\, dy
\right ) \, d\eta
\intertext{and}
c(g_\fy ,k)
&=
c(g_\fy e^{-i\scal k{\cdo}},0)
\end{align*}

\par

By straight-forward computations we get
\begin{align}
c(g_\fy ,0) &= 
\nm \phi{L^2}^{-2}
\int _{Q_{d,2\pi}}
h_\fy (\eta)\, d\eta ,
\label{Eq:cg0coeff}
\intertext{where}
h_\fy (\eta ) &= (2\pi )^{-\frac {3d}2}\int _{\rr d}
\scal F{\fy \otimes (\overline{\phi (\cdo -\eta )}
e^{-{i\scal y\cdo}})}
\widehat \phi (-y)e^{i\scal y\eta}\, dy ,
\notag
\end{align}
and it is clear that the map which takes $\fy$ into the 
right-hand side in \eqref{Eq:cg0coeff}
defines a continuous linear form on
$\maclS _s^\sigma (\rr d)$. Hence
$$
c(g_\fy ,0) = \scal f\fy ,\qquad \fy \in
\maclS _s^\sigma (\rr d),
$$
for some $f\in (\maclS _s^\sigma)'(\rr d)$. Furthermore, by
the quasi-periodicity of $F$ we obtain
\begin{multline*}
\scal {g_\fy e^{-i\scal k\cdo}}{\psi}
=
\scal {g_\fy }{\psi e^{-i\scal k\cdo}}
=
\scal F{\fy \otimes (\psi e^{-i\scal k\cdo} )}
\\[1ex]
=
\scal {F(\cdo -(k,0))}{\fy \otimes \psi}
=
\scal {F}{\fy (\cdo +k)\otimes \psi}
=
\scal {g_{\fy (\cdo +k)}}{\psi},
\end{multline*}
that is,
$$
g_\fy e^{-i\scal k\cdo} = g_{\fy (\cdo +k)}.
$$

\par

A combination of these facts now gives
\begin{multline*}
c(g_\fy ,k) = c(g_\fy e^{-i\scal k{\cdo}},0)
\\[1ex]
=
c(g_{\fy (\cdo +k)},0) = \scal f{\fy (\cdo +k)}
= \scal {f(\cdo - k)}\fy ,
\end{multline*}
giving that
\begin{multline*}
\scal {Z_1f}{\fy \otimes \psi}
=
\sum _{k\in \zz d}\scal {f(\cdo -k)}\fy\scal {e^{i\scal k\cdo}}\psi
\\[1ex]
=
\sum _{k\in \zz d}c(g_\fy ,k) \scal {e^{i\scal k\cdo}}\psi
=\scal {g_\fy}\psi = \scal F{\fy \otimes \psi}.
\end{multline*}
Hence, if $F_0=F-Z_1f$, then $\scal {F_0}{\fy \otimes \psi}=0$
when $\fy \in \maclS _s^\sigma (\rr d)$ and $\psi \in \maclS _\sigma ^s(\rr d)$.
By Remark \ref{Rem:TensorUniqueness}
it now follows that $F=Z_1f$, which gives the result.
\end{proof}

\par

For completeness we also show that all quasi-periodic
distributions are tempered or Gelfand-Shilov distributions. (Cf.
\cite[Section 7.2]{Ho1}.) Here $(\maclD ^{\sigma ,s})'(\rr {2d})$
and $(\maclD ^{\sigma ,s;0})'(\rr {2d})$ are the duals of
$\maclE ^{\sigma ,s}(\rr {2d})\cap C^\infty _0(\rr {2d})$
and
$\maclE ^{\sigma ,s;0}(\rr {2d})\cap C^\infty _0(\rr {2d})$,
respectively.

\par

\begin{prop}\label{Prop:QPerGSDist}
Let $s,\sigma >1$ and $E$ be an ordered basis of
$\rr d$. Then the following is true:
\begin{enumerate}
\item The set of all quasi-periodic elements of
order $E$ in $\mascD '(\rr {2d})$ is equal to $\mascS '_{Z,E}(\rr {2d})$;

\vrum

\item The set of all quasi-periodic elements of
order $E$ in $(\maclD ^{\sigma ,s})'(\rr {2d})$ is equal to
$(\maclE ^{\sigma ,s}_{Z,E})'(\rr {2d})$;

\vrum

\item The set of all quasi-periodic elements of
order $E$ in $(\maclD ^{\sigma ,s;0}) '(\rr {2d})$ is equal to
$(\maclE ^{\sigma ,s;0}_{Z,E})'(\rr {2d})$.
\end{enumerate}
\end{prop}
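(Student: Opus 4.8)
The plan is to reduce to the case where $E$ is the standard basis and then to localise $F$ to translates of a single fundamental domain by a Gevrey partition of unity, using that a translation by $k\in\Lambda_E$ multiplies $F$ only by the unimodular factor $e^{i\scal k\xi}$, while a translation by $\Lambda_E'$ leaves $F$ unchanged. As in the first paragraph of the proof of Theorem \ref{Thm:ZakDist}, the substitution $F(x,\xi)\mapsto F(T_E^{-1}x,T_E^*\xi)$ respects the splitting $\rr{2d}=\rr d\times\rr d$, hence maps each of $\mascD'(\rr{2d})$, $\maclD_{\sigma,s}'(\rr{2d})$, $\maclD_{0,\sigma,s}'(\rr{2d})$, $\mascS'(\rr{2d})$, $(\maclS_{s,\sigma}^{\sigma,s})'(\rr{2d})$ and $(\Sigma_{s,\sigma}^{\sigma,s})'(\rr{2d})$ homeomorphically onto itself, and turns quasi-periodicity of order $E$ into quasi-periodicity with $\Lambda_E=\zz d$ and $\Lambda_E'=2\pi\zz d$; so we may assume throughout that the fundamental domain is $Q=[0,1]^d\times[0,2\pi]^d$.

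Because $s,\sigma>1$ --- the only place this hypothesis enters --- we may fix a compactly supported $\chi$ which lies in $\maclE_{\sigma,s}(\rr{2d})$ in case (2), in $\maclE_{0,\sigma,s}(\rr{2d})$ in case (3) and merely in $C_c^\infty(\rr{2d})$ in case (1), and which satisfies $\sum_{(k,\kappa)\in\zz d\times 2\pi\zz d}\chi(x-k,\xi-\kappa)=1$. The continuity of $F$ on test functions supported in $\supp\chi$ provides, for every $h>0$, a constant $C=C_h$ with
$$
|\scal F\psi|\le C\sup_{\alpha_1,\alpha_2}\frac{\nm{\partial_x^{\alpha_1}\partial_\xi^{\alpha_2}\psi}{L^\infty}}{h^{|\alpha_1|+|\alpha_2|}\alpha_1!^{\sigma}\alpha_2!^{s}},\qquad \psi\in C_c^\infty(\rr{2d}),\ \supp\psi\subseteq\supp\chi,
$$
in case (2) (in case (1) the right-hand side becomes $C\sup_{|\alpha|\le N}\nm{\partial^\alpha\psi}{L^\infty}$ for some fixed $N\in\mathbf N$, and in case (3) the same estimate holds for some --- rather than every --- $h>0$). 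For $\varphi$ in the pertinent test space I would write $\scal F\varphi=\sum_{(k,\kappa)}\scal F{\chi(\cdot-(k,\kappa))\varphi}$, and quasi-periodicity together with $e^{i\scal k\kappa}=1$ for $\kappa\in 2\pi\zz d$ gives
$$
\scal F{\chi(\cdot-(k,\kappa))\varphi}=\scal{F(x,\xi)}{e^{i\scal k\xi}\chi(x,\xi)\varphi(x+k,\xi+\kappa)},
$$
where the test function paired with $F$ is supported in $\supp\chi$.

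Applying the local estimate to each term and expanding $\partial_x^{\alpha_1}\partial_\xi^{\alpha_2}\bigl(e^{i\scal k\xi}\chi(x,\xi)\varphi(x+k,\xi+\kappa)\bigr)$ by Leibniz, the only polynomial factors that arise are powers $k^\gamma$ with $\gamma\le\alpha_2$, coming from $\xi$-derivatives of $e^{i\scal k\xi}$; the factor $\chi$ contributes Gevrey bounds of order $(\sigma,s)$; and since $\varphi\in\maclS_{s,\sigma}^{\sigma,s}$ (resp. $\Sigma_{s,\sigma}^{\sigma,s}$), its translated derivatives satisfy, uniformly for $(x,\xi)\in\supp\chi$, bounds of the form $h_1^{|\beta_1|+|\beta_2|}\beta_1!^{\sigma}\beta_2!^{s}e^{-r(|k|^{1/s}+|\kappa|^{1/\sigma})}$ for suitable $h_1,r>0$ (for every $r>0$ in the Beurling case). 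Writing $e^{-r|k|^{1/s}}=e^{-\frac r2|k|^{1/s}}e^{-\frac r2|k|^{1/s}}$ and invoking Lemma \ref{Lemma:ZakTestFunctions} in the form $|k^\gamma|e^{-\frac r2|k|^{1/s}}\lesssim h_2^{|\gamma|}\gamma!^{s}$, the powers of $k$ are absorbed by $\alpha_2!^{s}$ (as $\gamma\le\alpha_2$) and the Gevrey bounds of $\chi$ and $\varphi$ by $\alpha_1!^{\sigma}\alpha_2!^{s}$ --- for this one takes $h$ large in case (2), while in case (3) one takes the Gevrey constants of $\chi$ and $\varphi$ small relative to the fixed $h$ furnished by $F$. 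The supremum over $\alpha$ then leaves a constant times a seminorm of $\varphi$, and summing the leftover exponential over the lattice costs only $\sum_{(k,\kappa)}e^{-\frac r2(|k|^{1/s}+|\kappa|^{1/\sigma})}<\infty$; hence $|\scal F\varphi|\lesssim$ (a seminorm of $\varphi$), which is the claimed inclusion in cases (2) and (3).

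Part (1) follows by the same but simpler argument: after at most $N$ derivatives $e^{i\scal k\xi}$ contributes a factor of size $O(|k|^{N})$, and a sufficiently strong Schwartz seminorm of $\varphi\in\mascS(\rr{2d})$ both absorbs this factor and makes $\sum_{(k,\kappa)}$ converge. The step I expect to be the main obstacle is the bookkeeping in cases (2) and (3): one must check that the powers of $k$ produced by differentiating $e^{i\scal k\xi}$ have exactly the growth that Lemma \ref{Lemma:ZakTestFunctions} converts into $\gamma!^{s}$ with $\gamma\le\alpha_2$ --- so that they are controlled by the $\alpha_2!^{s}$ in the denominator of the local estimate --- and that peeling off half of the decay of $\varphi$ still leaves enough both to feed that lemma and to sum over $\zz d\times 2\pi\zz d$. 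Once the matching of these exponents is arranged correctly, the remaining estimates are routine.
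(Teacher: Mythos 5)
Your proof is correct and follows essentially the same route as the paper: after reducing to the standard basis, the paper also uses a compactly supported Gevrey partition of unity $\chi$ and quasi-periodicity to fold the pairing onto $\supp\chi$, via the operator $T_\chi\Phi=\sum_{k,\kappa}e^{-i\scal k\xi}\Phi(\cdo-k,\cdo-\kappa)\chi$, which is exactly your sum $\sum_{(k,\kappa)}e^{i\scal k\xi}\chi\,\varphi(\cdo+k,\cdo+\kappa)$ after re-indexing. Your Leibniz/Gevrey bookkeeping (powers of $k$ absorbed through Lemma \ref{Lemma:ZakTestFunctions}, splitting the exponential decay between absorption and lattice summability) is precisely the verification of the continuity of $T_\chi$ that the paper dismisses as ``straight-forward computations,'' so your write-up is the same argument carried out in more detail.
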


\par

\begin{proof}
We only prove (2). The other assertions follow by
similar arguments and are left for the reader.

\par

Let $F\in C^\infty _{Z,E}(\rr {2d})$,
$\Phi \in \maclS _{s,\sigma}^{\sigma ,s}(\rr {2d})$
and let $\chi \in C_0^\infty (\rr {2d})
\cap \maclE ^{\sigma ,s}(\rr {2d})$ be such that
$$
\sum _{k\in \Lambda _E}\sum _{\kappa\in \Lambda _E'}
\chi (x+k,\xi +\kappa ) =1.
$$
If $\Phi \in \maclS _{s,\sigma}^{\sigma ,s}(\rr {2d})$,
then it follows by the quasi-periodisity of $F$
and some straight-forward computations that
\begin{align}
\scal F\Phi &= \scal F{T_\chi \Phi},
\label{Eq:QuasiPerAction1}
\intertext{where}
(T_\chi \Phi)(x,\xi )
&=
\sum _{k\in \Lambda _E}\sum _{\kappa \in \Lambda _E'}
e^{-i\scal k\xi}\Phi (x-k,\xi -\kappa )\chi (x,\xi ),
\label{Eq:QuasiPerAction2}
\end{align}
and that $T_\chi$ in \eqref{Eq:QuasiPerAction2}
is continuous from
$\maclS _{s,\sigma}^{\sigma ,s}(\rr {2d})$
to $C_0^\infty (\rr {2d})
\cap \maclE ^{\sigma ,s}(\rr {2d})$.

\par

By letting $\scal F\Phi$ be defined by the
right-hand side of \eqref{Eq:QuasiPerAction1}
when $F\in (\maclD ^{\sigma ,s})'(\rr {2d})$
and $\Phi \in \maclS _{s,\sigma}^{\sigma ,s}(\rr {2d})$,
it follows that $\Phi \mapsto \scal F{T_\chi \Phi}$
in \eqref{Eq:QuasiPerAction1}
defines a linear and continuous form on
$\maclS _{s,\sigma}^{\sigma ,s}(\rr {2d})$
which agree with the usual distribution action,
$\Phi \mapsto \scal F\Phi$ when
$\Phi \in C_0^\infty (\rr {2d})
\cap \maclE ^{\sigma ,s}(\rr {2d})$.
\end{proof}

\par

The mapping properties of Gelfand-Shilov distributions also lead
to some quieries concerning the inversion formula
\eqref{Eq:ZakTransfInv} for the Zak transform. Evidently, if
$F$ is a general quasi-periodic distribution or even Gelfand-Shilov
distribution, then the integral on the right-hand side of
\eqref{Eq:ZakTransfInv} might not be defined. On the other hand,
since $Z_E^{-1}F(x)$ is the zero order Fourier coefficient of the
expansion \eqref{Eq:DefZak}, it follows from \cite[Remark 2.3]{ToNa}
that the following is true. The details are left for the reader.

\par

\begin{prop}
Let $s,\sigma >0$ and $\phi \in \maclS _s^\sigma (\rr d)\setminus 0$
($\phi \in \Sigma _s^\sigma (\rr d)\setminus 0$) be fixed,
$f\in (\maclS _s^\sigma )'(\rr d)$ ($f\in (\Sigma _s^\sigma )'(\rr d)$),
and let $F=Z_Ef$. Then
\begin{multline}\tag*{(\ref{Eq:ZakTransfInv})$'$}
f(x) = (Z_E^{-1}F)(x)
\\[1ex]
= (\nm \phi{L^2}^2|Q_{E'}|)^{-1}
\int _{Q_{E'}}\left (
\int _{\rr d}(V_\phi F(x,\cdo  ))(\xi ,y)\widehat \phi (-y)e^{i\scal y\xi}\, dy
\right ) \, d\xi .
\end{multline}
\end{prop}

\par

\subsection{The Zak transform on Lebesgue and
modulation spaces}

\par

For completeness we begin the subsection by making a review of
the Zak transform when acting on Lebesgue spaces. Here we let
$$
\theta _1(p)=\min \left (0,1-\frac 2p \right )
\quad \text{and}\quad
\theta _2(p)=\max \left (0,1-\frac 2p \right ).
$$

\par

\begin{prop}\label{Prop:ZakOnLebesgue}
Let $E$ be an ordered basis of $\rr d$. Then the following is true:
\begin{enumerate}
\item if $p\in (0,2]$, then $Z_E$ from $\mascS (\rr d)$ to
$C^\infty (\rr {2d})$ is uniquely extendable to a continuous map
from $L^p(\rr d)$ to $L^p_{Z,E}(\rr {2d})$, and
$$
\nm {Z_Ef}{L^p_{Z,E}(\rr {2d})}\le \nm f{L^p(\rr d)},\qquad f\in L^p(\rr d) \text ;
$$

\vrum

\item if $p\in [2,\infty ]$, then $Z_E^{-1}$ from $C^\infty _{Z,E}(\rr {2d})$ to
$\mascS (\rr d)$ is uniquely extendable to a continuous map
from $L^p_{Z,E}(\rr {2d})$ to $L^p(\rr d)$, and
$$
\nm {f}{L^p(\rr d)}\le \nm {Z_Ef}{L^p_{Z,E}(\rr {2d})},
\qquad f\in L^p(\rr {d}).
$$

\vrum

\item if $f\in \mascS '(\rr d)$, $p\in [1,\infty ]$ and
$v\in \mascP _E(\rr d)$ satisfy $1/v\in L^1(\rr d)$, then
$$
\nm {f\cdot v^{\theta _1(p)}}{L^p(\rr d)}
\lesssim
\nm {Z_Ef}{L^p_{Z,E}(\rr {2d})}
\lesssim
\nm {f\cdot v^{\theta _2(p)}}{L^p(\rr d)}.
$$
\end{enumerate}
\end{prop}

\par

Proposition \ref{Prop:ZakOnLebesgue} (1) and (2) are evidently true for $p=2$,
in view of Proposition \ref{Prop:ZakTransfBasicMaps}, and
is presented in \cite[Lemma 3.1.2]{Ti}, without any proof in the case $p=1$.
In order to be self-contained, we give a proof in Appendix \ref{App:A}.

\par

When investigation mapping properties of the Zak
transform on modulation
spaces, we need to deduce various kinds of estimates
on short-time Fourier transforms and partial short-time
Fourier transforms of
Zak transforms. Especially we search suitable estimates on $V_\Phi (Z_E f)$,
and on
\begin{align}
(\mathsf{ZV}_{E ,\phi _1}^{(1)}f)(x,\xi ,\eta ) &= (V_{\phi _1}
(Z_E f(\cdo ,\xi )))(x ,\eta )
\label{Eq:STFTZak1}
\intertext{and}
(\mathsf{ZV}_{E ,\phi _2}^{(2)} f)(x,\xi ,y) &= (V_{\phi _2} (Z_E f(x ,\cdo )))(\xi ,y ),
\label{Eq:STFTZak2}
\end{align}
which are compositions of the Zak transform and the partial short-time
Fourier transforms with respect to the first and second variable, respectively.

\par

From the previous subsection it is clear that there is a one-to-one correspondence
between quasi-periodic functions and distributions, and Zak transforms of
functions and distributions. For a quasi-periodic function or distribution $F$
on $\rr {2d}$ which satisfies \eqref{Eq:QuasiPer}, and a suitable function or
distribution $\Phi$ on $\rr {2d}$, we have
\begin{equation}\label{Eq:STFTQuasiPer}
\begin{alignedat}{2}
(V_\Phi F)(x+ k,\xi ,\eta ,y) &= e^{-i\scal k\eta} 
(V_\Phi F)(x,\xi ,\eta ,y-k),
& \quad k &\in \Lambda _E,
\\[1ex]
(V_\Phi F)(x,\xi +\kappa ,\eta ,y) &=
e^{-i\scal y\kappa}(V_\Phi F)(x,\xi ,\eta ,y),
&\quad \kappa &\in \Lambda _E',
\end{alignedat}
\end{equation}
which follows by straight-forward computations.
We remark that functions
and distributions which satisfy conditions given
in \eqref{Eq:STFTQuasiPer}
are so-called \emph{echo-periodic functions and distributions},
considered in \cite{Toft22}.

\par

First we have the following result concerning
identifying Lebesgue spaces
via estimates of corresponding Zak transforms.

\par

\begin{thm}\label{Thm:ZakLebesgue}
Let $E$ be an ordered basis of $\rr d$,
$p,r\in (0,\infty ]$, $\omega \in
\mascP _E(\rr d)$, $\phi \in \Sigma _1(\rr d)
\setminus 0$,
and let $f$ be a Gelfand-Shilov distribution on
$\rr d$. Then
\begin{align}
\nm f{L^p_{(\omega )}}
&\asymp
\nm
{G_{E,r,\omega ,f}}{L^{p}(Q_E\times \rr d)},
\label{Eq:ZakLebesgue}
\intertext{where}
G_{E,r,\omega ,f}(x,y)
&\equiv
\nm{(\mathsf{ZV}_{E,\phi}^{(2)} f)(x,\cdo ,y)\omega (-y)}{L^r(Q_{E'})}.
\intertext{In particular,}
\nm f{L^p}
&\asymp
\nm{\mathsf{ZV}_{E ,\phi}^{(2)} f}
{L^p(Q _{E\times E'}\times \rr d)}.
\end{align}
\end{thm}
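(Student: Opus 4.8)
The plan is to turn the statement into a fibrewise application of the periodic characterisation \eqref{Eq:ModPerNormEquiv3} of Section \ref{sec2}. First I would reduce to the case when $E$ is the standard basis: by \eqref{Eq:ZakTransfers} the substitution $(x,\xi )\mapsto (T_E^{-1}x,T_E^*\xi )$ turns $Z_Ef$ into $Z_1(f\circ T_E)$, carries $\kappa (E)\times \kappa (E')$ onto a product of standard cubes (up to a Jacobian constant absorbed into $\asymp$), replaces $\omega$ by the moderate weight $\omega \circ T_E$, and changes the $L^p_{(\omega )}$-norm and the window in a harmless way, since different $\phi \in \Sigma _1(\rr d)\setminus 0$ give equivalent quantities on both sides. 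So I may assume $\Lambda _E=\zz d$ and $\kappa (E)=[0,1]^d$.

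Next comes the fibrewise step. For fixed $x$ put $g_x=Z_Ef(x,\cdo )$. By \eqref{Eq:DefZak} this is the expansion $g_x=\sum _{j\in \Lambda _E}f(x-j)e^{i\scal j{\cdo}}$, which by \eqref{Eq:QuasiPer} is $\Lambda '_E$-periodic; since the lattice dual to $\Lambda '_E$ is $\Lambda _E$, comparison with \eqref{Eq:Expan2} shows $g_x\in (\maclE _0^{E'})'(\rr d)$ with Fourier coefficients $c(g_x,j)=f(x-j)$, $j\in \Lambda _E$, and $\mathsf{ZV}_{E,\phi}^{(2)}f(x,\cdo ,\cdo )=V_\phi g_x$ by \eqref{Eq:STFTZak2}. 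Now I apply \eqref{Eq:ModPerNormEquiv3} (equivalently Proposition \ref{Prop:PeriodicMod}) with the permuted dual pair $(E',E)$ in the role of $(E_0,E_0')$, with $\mabfr =(r,\dots ,r)$, $\mabfq =(p,\dots ,p)$, and with weight $\omega _0=\check \omega$, $\check \omega (z)=\omega (-z)$ (still in $\mascP _E(\rr d)$); the window $\phi \in \Sigma _1(\rr d)\setminus 0$ is admissible since $\Sigma _1(\rr d)$ is contained in every $M^1_{(\Theta _\rho v)}(\rr d)$. With these choices the first member of \eqref{Eq:ModPerNormEquiv3} is exactly $\nm {G_{E,r,\omega ,f}(x,\cdo )}{L^p(\rr d)}$ — the inner $L^r$-norm over the period cell $\kappa (E')$ in the first STFT variable, the outer $L^p$-norm over $\rr d$ in the frequency variable — with the reflection $\omega (-y)$ appearing because $\abp{V_\phi g_x(\xi ,y)}$ concentrates at $y=j$, where $c(g_x,j)=f(x-j)$, while $f(x-j)$ with $x$ in the bounded cell $\kappa (E)$ corresponds to $f$ weighted by $\omega$ near $-j$; the last member of \eqref{Eq:ModPerNormEquiv3} equals $\nm {\{ f(x-j)\omega (-j)\} _{j\in \Lambda _E}}{\ell ^p}$. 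Since the equivalence constants in \eqref{Eq:ModPerNormEquiv3} depend only on $E$, $\phi$, $\omega$, $p$, $r$, the relation $\nm {G_{E,r,\omega ,f}(x,\cdo )}{L^p(\rr d)}\asymp \nm {\{ f(x-j)\omega (-j)\} _{j\in \Lambda _E}}{\ell ^p}$ holds uniformly in $x$.

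It then remains to integrate over $x\in \kappa (E)$. Since $\kappa (E)$ is compact and $\omega$ is moderate, $\omega (x-j)\asymp \omega (-j)$ uniformly for $x\in \kappa (E)$, $j\in \Lambda _E$, so the right-hand side above is $\asymp \nm {\{ f(x-j)\omega (x-j)\} _{j\in \Lambda _E}}{\ell ^p}$. As the translates $j+\kappa (E)$, $j\in \Lambda _E$, tile $\rr d$ up to a null set, $\nm f{L^p_{(\omega )}}^p=\int _{\kappa (E)}\sum _{j\in \Lambda _E}\abp{f(x-j)\omega (x-j)}^p\, dx$ (with the integral and the sum replaced by suprema when $p=\infty$); raising the previous relation to the $p$-th power and integrating in $x$ gives $\nm f{L^p_{(\omega )}}\asymp \nm {G_{E,r,\omega ,f}}{L^p(\kappa (E)\times \rr d)}$, which is \eqref{Eq:ZakLebesgue}. (That finiteness of one side forces finiteness of the other follows in the same way, \eqref{Eq:ModPerNormEquiv3} being an equivalence valid for every periodic Gelfand–Shilov distribution, in particular for each $g_x$.) The "in particular" assertion is the case $r=p$, $\omega =1$: then $G_{E,p,1,f}(x,y)=\nm {\mathsf{ZV}_{E,\phi}^{(2)}f(x,\cdo ,y)}{L^p(\kappa (E'))}$, and Fubini together with $\kappa (E\times E')=\kappa (E)\times \kappa (E')$ gives $\nm {G_{E,p,1,f}}{L^p(\kappa (E)\times \rr d)}=\nm {\mathsf{ZV}_{E,\phi}^{(2)}f}{L^p(\kappa (E\times E')\times \rr d)}$.

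I expect the main difficulty to lie in the bookkeeping of the second paragraph: matching precisely the order of the mixed norm defining $G_{E,r,\omega ,f}$ with the mixed norm in \eqref{Eq:ModPerNormEquiv3}, keeping track of the reflected weight $\check \omega$, and — since $f$ is a priori only a Gelfand–Shilov distribution — justifying rigorously that the slice $g_x=Z_Ef(x,\cdo )$ is a genuine $\Lambda '_E$-periodic Gelfand–Shilov distribution with the stated Fourier coefficients and that $\mathsf{ZV}_{E,\phi}^{(2)}f(x,\cdo ,\cdo )=V_\phi g_x$. This is routine — it is concrete on the dense subspace $\Sigma _1(\rr d)$ and extends by the a priori estimate just established — but should be written out carefully; the remaining steps are the standard tiling and change-of-variables arguments.
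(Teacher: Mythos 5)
Your proposal is correct and follows essentially the same route as the paper: both proofs rest on the observation that the slice $\xi \mapsto Z_Ef(x,\xi )$ is $E'$-periodic with Fourier coefficients $f(x-j)$, apply the periodic characterisation \eqref{Eq:ModPerNormEquiv3} fibrewise to get $\nm {G_{E,r,\omega ,f}(x,\cdo )}{L^p(\rr d)}\asymp \nm {\{ f(x-j)\omega (-j)\} _j}{\ell ^p}$, replace $\omega (-j)$ by $\omega (x-j)$ via moderateness, and integrate the $p$-th power over $x\in \kappa (E)$. The extra reduction to the standard basis and the explicit derivation of the unweighted special case are harmless additions, not a different method.
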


\par

\begin{proof}
We only prove the result for $p<\infty$. The case $p=\infty$ follows
by similar arguments and is left for the reader.

\par

The distribution $\xi \mapsto Z_Ef(x,\xi )$ is
$E'$-periodic, and it follows from
\eqref{Eq:ModPerNormEquiv3} that
\begin{multline*}
\left ( \sum _{j\in \Lambda _E} |f(x-j)
\omega (x-j)|^p \right )^{\frac 1p}
\asymp
\left ( \sum _{j\in \Lambda _E}
|f(x-j)\omega (-j)|^p \right )^{\frac 1p}
\\[1ex]
\asymp \nm {G_{E,r,\omega ,f}(x,\cdo )}{L^p(\rr d)},
\quad x\in Q_E.
\end{multline*}
The result now follows by apply the
$L^p(Q_E)$ quasi-norm with respect to
the $x$-variable.
\end{proof}

\par

In the same way we may identify modulation spaces by 
using the Zak transform as in the next result.
We also recall Definition \ref{Def:ExtClassModSpaces}
and Remark \ref{Rem:WienerSpace} for definitions
and notions concerning the Wiener amalgam space
$W^{\mabfp}_{E,(\omega )}(\rr d)$.

\par

\begin{thm}\label{Thm:ZakModulation}
Let $E,E_0$ be an ordered bases of $\rr d$,
$\mabfp ,\mabfr \in (0,\infty ]^{2d}$,
$\omega _0\in \mascP _E(\rr {2d})$ and
$\omega \in \mascP _E(\rr {4d})$
be such that \eqref{Eq:ZakModulationWeight} holds.
Then $Z_{E}$ from $\Sigma _1(\rr d)$ to
$C^\infty (\rr {2d})$ is uniquely
extendable to a homeomorphism from
$M^{\mabfp}_{E\times E_0',(\omega _0)}(\rr d)$
to $W^{\mabfr ,\mabfp}_{Z,E,E_0,(\omega )}(\rr {2d})$,
and
\begin{equation}\label{Eq:ZakModulation}
\nm {f}{M^{\mabfp}_{E\times E_0',(\omega _0)}}
\asymp
\nm {Z_{E} f}{W^{\mabfr ,\mabfp}_{Z,E,E_0,(\omega )}},
\qquad f\in \Sigma _1'(\rr d).
\end{equation}
\end{thm}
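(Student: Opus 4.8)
The plan is to reduce to the standard basis, derive an explicit formula for the short-time Fourier transform of $Z_Ef$ in terms of that of $f$, use it to prove the quantitative equivalence \eqref{Eq:ZakModulation}, and finally combine it with Theorem~\ref{Thm:ZakDist} to obtain the asserted homeomorphism. By \eqref{Eq:ZakTransfers} and the behaviour of the spaces $M^{\mabfp}_{E_1,(\omega_0)}$ and $W^{\infty,\mabfp}_{E_2,(\omega)}$ under the substitution $f\mapsto f\circ T_E$ (which permutes and rescales the involved bases and transports the weights accordingly), it suffices to treat the case in which $E$ is the standard basis, so that $\Lambda_E=\zz d$ and $\Lambda_E'=2\pi\zz d$. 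Since $\Sigma_1(\rr d)$ is dense in $\Sigma_1'(\rr d)$, and both $M^{\mabfp}_{E_1,(\omega_0)}(\rr d)$ and $W^{\infty,\mabfp}_{E_2,(\omega)}(\rr{2d})$ are continuously contained between $\Sigma_1$ and $\Sigma_1'$, every identity below can be verified for $f\in\Sigma_1(\rr d)$ and then extended by continuity; in particular $Z_Ef$ for $f\in\Sigma_1'(\rr d)$ is the quasi-periodic element of $\Sigma_1'(\rr{2d})$ furnished by Theorem~\ref{Thm:ZakDist}, and $V_\Phi(Z_Ef)$ is a genuine continuous function on $\rr{4d}$.

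Next I would fix a product window $\Phi=\phi_1\otimes\phi_2$ with $\phi_1,\phi_2\in\Sigma_1(\rr d)\setminus 0$ Gaussians and, unwinding \eqref{Eq:DefZak} and the partial transforms \eqref{Eq:STFTZak1}, \eqref{Eq:STFTZak2}, establish (first for $f\in\Sigma_1(\rr d)$, then by density for all $f\in\Sigma_1'(\rr d)$) the identity
\[
V_\Phi(Z_1f)(x,\xi,\eta,y)=C\sum_{\alpha\in\zz d}e^{-i\scal{\xi}{y-\alpha}}\,e^{-i\scal{\alpha}{\eta}}\;\overline{\widehat{\phi_2}(\alpha-y)}\;V_{\phi_1}f(x-\alpha,\eta),
\]
the series converging absolutely because $\widehat{\phi_2}$ decays faster than any exponential while $V_{\phi_1}f$ grows at most exponentially. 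This formula exhibits $V_\Phi(Z_1f)$ as echo-periodic in the sense of \eqref{Eq:STFTQuasiPer}; hence, using that $\omega(x,\xi,\eta,y)=\omega_0(x-y,\eta)$ with $\omega_0$ a $v_0$-moderate weight, a short computation shows that $(x,\xi)\mapsto\nm{V_\Phi(Z_1f)(x,\xi,\cdot,\cdot)\,\omega(x,\xi,\cdot,\cdot)}{L^{\mabfp}_{E_2}}$ is $\Lambda_E$-periodic in $x$ and $\Lambda_E'$-periodic in $\xi$ --- the weight $\omega_0(x-y,\eta)$ being exactly the one whose $x$-translation is compensated by the $y$-translation dictated by quasi-periodicity. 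Consequently $\nm{Z_1f}{W^{\infty,\mabfp}_{E_2,(\omega)}}$ equals the supremum of this quantity over the single period cell $\kappa(E)\times\kappa(E')$.

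It then remains to prove that this supremum is $\asymp\nm{V_{\phi_1}f\cdot\omega_0}{L^{\mabfp}_{E_1}}=\nm f{M^{\mabfp}_{E_1,(\omega_0)}}$. For the estimate ``$\lesssim$'' I would take absolute values in the formula (killing the $\xi$-dependence), use moderateness to bound $|V_\Phi(Z_1f)(x,\xi,\eta,y)|\,\omega_0(x-y,\eta)\lesssim\sum_{\alpha}w(y-\alpha)\,b_\alpha(x,\eta)$ with $w$ a rapidly decaying function built from $\widehat{\phi_2}$ and $v_0$ and $b_\alpha(x,\eta)=|V_{\phi_1}f(x-\alpha,\eta)|\,\omega_0(x-\alpha,\eta)$, recognise the $y$-slice as a semi-discrete convolution over $\Lambda_E$, and apply Proposition~\ref{Prop:SemiContConvEstNonPer} (together with cube splitting as in Lemma~\ref{Lemma:WienerNormEst2}) to replace the part of the $L^{\mabfp}_{E_2}$-norm running over the $y$-directions by a discrete mixed-Lebesgue norm of $\{b_\alpha(x,\eta)\}_\alpha$; the remaining $\eta$-directions and the supremum over $x\in\kappa(E)$ are then reassembled, exactly as in the identifications behind \eqref{Eq:ModPerNormEquiv3}--\eqref{Eq:ModPerNormEquiv5} and using $\bigcup_{j\in\Lambda_E}(j+\kappa(E))=\rr d$, into the continuous norm $\nm{V_{\phi_1}f\cdot\omega_0}{L^{\mabfp}_{E_1}}$. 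For the reverse estimate one uses that for $y$ near $\alpha\in\Lambda_E$ the $\alpha$-term dominates in the formula, so that a Gaussian subharmonicity bound as in Lemma~\ref{SubharmonicEst} and the proof of Proposition~\ref{Prop:WienerEquiv}$'$ controls $b_\alpha(x,\eta)$ from below by local $L^r$-averages of $V_\Phi(Z_1f)$, and a reverse semi-discrete convolution estimate recovers $\nm{V_{\phi_1}f\cdot\omega_0}{L^{\mabfp}_{E_1}}$ from the $W$-norm; window-independence of the modulation and Wiener quasi-norms then removes the restriction to product Gaussian windows. Combining this equivalence with Theorem~\ref{Thm:ZakDist}, which already supplies the unique continuous extension of $Z_E$ to $\Sigma_1'(\rr d)$ and identifies its range with the quasi-periodic elements of $\Sigma_1'(\rr{2d})$, shows that $Z_E$ restricts to a homeomorphism of $M^{\mabfp}_{E_1,(\omega_0)}(\rr d)$ onto the set of quasi-periodic elements of order $E$ in $W^{\infty,\mabfp}_{E_2,(\omega)}(\rr{2d})$, which is the assertion.

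The step I expect to be the genuine obstacle is the norm equivalence on the period cell: one has to carry, simultaneously, the bases $E$ and $E_0'$, the mixed exponent $\mabfp\in(0,\infty]^{2d}$ and the lattices $\Lambda_E$ and $\Lambda_{E_0'}$ through the passage between the continuous $L^{\mabfp}_{E_1}$-structure of $V_{\phi_1}f$ and the ``discrete-in-$y$, continuous-in-$\eta$, supremum-in-$x$'' structure of $V_\Phi(Z_1f)$, and one must do so in the full quasi-Banach range $\mabfp\in(0,\infty]^{2d}$, where Young's and Minkowski's inequalities are unavailable and one is forced to work with the semi-discrete convolution estimates of Proposition~\ref{Prop:SemiContConvEstNonPer}, with Lemma~\ref{Lemma:WienerNormEst2}, and with the subharmonicity of Lemma~\ref{SubharmonicEst}, in the same spirit as the proofs of Propositions~\ref{Prop:WienerRel1}, \ref{Prop:WienerEquiv}$'$ and \ref{Prop:PerMod}.
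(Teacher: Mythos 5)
Your overall architecture is essentially the paper's: reduction to the standard basis via \eqref{Eq:ZakTransfers}, a product window $\Phi=\phi_1\otimes\phi_2$, the expansion of $V_\Phi(Z_1f)$ into translates of $V_{\phi_1}f$ (your displayed formula is just the Fourier-expanded form of the paper's identity \eqref{Eq:PerPartZakSTFT}), the echo-periodicity \eqref{Eq:STFTQuasiPer} to reduce the outer $L^\infty$ to a period cell, and the quasi-Banach tools of Section \ref{sec2} (Proposition \ref{Prop:WienerEquiv}$'$, Lemma \ref{Lemma:WienerNormEst2}, the semi-discrete convolution estimates). The direction $\nm{Z_Ef}{W^{\infty,\mabfp}_{E_2,(\omega)}}\lesssim\nm f{M^{\mabfp}_{E_1,(\omega_0)}}$ as you sketch it (absolute values, semi-discrete convolution in $y$ against the rapidly decaying $\widehat{\phi_2}$, reassembly via $\bigcup_{j}(j+\kappa(E))=\rr d$) is sound and is a legitimate, slightly more hands-on substitute for the paper's appeal to \eqref{Eq:ModPerNormEquiv3}.

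The gap is in the reverse inequality. The claim that ``for $y$ near $\alpha$ the $\alpha$-term dominates'' is not a valid pointwise statement: for $\alpha'\neq\alpha$ the window factor $\widehat{\phi_2}(\alpha'-y)$ is small, but it multiplies $V_{\phi_1}f(x-\alpha',\eta)$, which may be arbitrarily large relative to $V_{\phi_1}f(x-\alpha,\eta)$, so at a single $\xi$ one cannot recover $|V_{\phi_1}f(x-\alpha,\eta)|$ from $|V_\Phi(Z_1f)(x,\xi,\eta,y)|$; cancellation among the $\alpha$-terms is possible. The only mechanism that isolates the $\alpha$-term is the phase $e^{i\scal{\xi}{\alpha}}$, i.e.\ one must integrate against $e^{i\scal{\xi}{y-\alpha}}$ over $\xi\in\kappa(E')$ --- equivalently, regard $j\mapsto V_{\phi_1}f(x-j,\eta)e^{-i\scal j\eta}$ as the Fourier coefficients of the $E'$-periodic distribution $\xi\mapsto(\mathsf{ZV}^{(1)}_{E,\phi_1}f)(x,\xi,\eta)$. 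That produces an $L^1(\kappa(E'))$-average in $\xi$ on the right-hand side, whereas the $W^{\infty,\mabfp}_{E_2,(\omega)}$-quasi-norm only controls $\sup_\xi$ of the $L^{\mabfp}_{E_2}$-norm in $(\eta,y)$; interchanging the $\xi$-average with that inner norm is Minkowski's inequality when $\min(\mabfp)\ge 1$ but fails in the quasi-Banach range. This is precisely where the paper invokes the two-sided equivalence \eqref{Eq:ModPerNormEquiv3} coming from Propositions \ref{Prop:PerMod} and \ref{Prop:PeriodicMod} --- namely that for a periodic distribution all the $L^r(\kappa(E'))$-in-$\xi$ versions, $r\in(0,\infty]$, of the mixed norms of its short-time Fourier transform are equivalent to the $\ell^{\mabfp_1}$-norm of its Fourier coefficients; see \eqref{Eq:PerCoeffEquivHere1}, which is used once with $r=r_0\le\min(\mabfp)$ and once with $r=\infty$. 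Your sketch of the reverse direction (subharmonicity plus ``a reverse semi-discrete convolution estimate'') does not supply this step, and I do not see how to complete it without importing exactly that periodic-distribution equivalence.
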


\par

\begin{proof}
First we prove the result for $\mabfr =\infty$.
Let $\Phi =\phi _1\otimes \phi _2$ with
$\phi _1,\phi _2\in \Sigma _1(\rr d)\setminus 0$.
By straight-forward computations we get
\begin{equation}\label{Eq:PerPartZakSTFT}
(\mathsf{ZV}_{1,\phi _1}^{(1)}f)(x,\xi ,\eta )
=
\sum _{j\in \Lambda _E}\big ( (V_{\phi _1}f)(x-j,\eta )
e^{-i\scal j\eta}\big )e^{i\scal j\xi }.
\end{equation}
Let
$$
\mabfp _1=(p_1,\dots ,p_d)
\quad \text{and}\quad
\mabfp _2=(p_{d+1},\dots ,p_{2d})
$$
when
$$
\mabfp = (p_1,\dots ,p_{2d}),
$$
and consider the functions
\begin{align*}
F(x,\eta )
&=
\nm {(V_{\phi _1}f)(x-\cdo ,\eta )
\omega _0(x-\cdo ,\eta )}
{\ell ^{\mabfp _1}_{E}(\Lambda _{E})}.
\\[1ex]
g_0 (x)&= \nm {F(x,\cdo )}{L^{\mabfp _2}_{E_0'}(\rr d)}
\\[1ex]
G_0(x,\xi ,\eta ,y)
&\equiv
|V_\Phi(Z_E f)(x,\xi ,\eta ,y)\omega _0(x-y,\eta )|,
\\[1ex]
G(x,\xi ,\eta )
&\equiv
\nm {G_0(x,\xi ,\eta ,\cdo )}
{L^{\mabfp _1}_{E}(\rr d)},
\\[1ex]
H(x,\xi )
&=
\nm {G(x,\xi ,\cdo )}{L^{\mabfp _2}_{E_0'}(\rr d)},
\intertext{and}
h_0(x) &= h_{0,r_0}(x) = \nm {H(x,\cdo )}{L^{r_0}_{E_0'}(Q _{E_0'})},
\quad r_0\in (0,\infty ].
\end{align*}
Since $\xi
\mapsto
(\mathsf{ZV}_{1,\phi _1}^{(1)}f)(x,\xi ,\eta )$
is $E'$-periodic
with Fourier coefficients
$$
j\mapsto (V_{\phi _1}f)(x-j,\eta )
e^{-i\scal j\eta}
$$
(cf. \eqref{Eq:PerPartZakSTFT}),
and the (partial) short-time Fourier transform of that distribution equals
$V_\Phi (Z_1 f)$, it
follows from \eqref{Eq:ModPerNormEquiv3} that
\begin{equation}\label{Eq:PerCoeffEquivHere1}
F(x,\eta ) \asymp \nm {G(x,\cdo ,\eta )}{L^{r}_{E'}(Q_{E'})},
\quad r\in (0,\infty ].
\end{equation}

\par

First let $r_0\le \min (\mabfp )$. If we apply the 
$L^{\mabfp _2}_{E_0'}$ norm on
\eqref{Eq:PerCoeffEquivHere1} with respect to the
$\eta$ variable
and using H{\"o}lder's inequality we get
\begin{multline}\label{Eq:f0h0Est1}
g_0(x)
=
\nm {F(x,\cdo )}{L^{\mabfp _2}_{E_0'}(\rr d)}
\asymp
\nm {G(x,\cdo )}{L^{r_0,\mabfp _2}_{E'\times E_0'}(Q_{E'} \times \rr d)}
\\[1ex]
\lesssim
\nm {H(x,\cdo )}{L^{r_0}_{E'}(Q_{E'})} = h_{0,r_0}(x).
\end{multline}
If
$$
g_1(x) = \nm {F_1(x,\cdo )}{\ell ^{\mabfp _2}_{E_0'}}
\quad \text{with}\quad
F_1(x,\iota ) = \nm {F(x,\iota )}{L^{r_0}_{E_0'}(\iota +Q _{E_0'})},
$$
then the fact that $r_0\le \min (\mabfp )$ and Jensen's inequality give
$g_1 \lesssim g_0$.

\par

By applying the $L^{r_0}_{E}(Q_E)$ norm on the latter inequality,
using the fact that
$$
\omega _0(x-y,\eta )
\asymp \omega _0(-y,\eta ),
\qquad x\in Q_E
$$
and Jensen's inequality again we obtain
$$
\nm {a_{1,f}}{\ell ^{\mabfp}_{E_1}}\lesssim \nm {g_0}{L^{r_0}(Q_E)},
\quad \text{where}\quad
a_{1,f}(j,\iota )
=
\nm {V_{\phi _1}f\cdot \omega _0}{L^{r_0}_{E_1}((j,\iota )+Q _{E_1})},
$$
where $E_1=E\times E_0'$. That is,
$$
\nm {V_{\phi _1}f}{\sfW ^{r_0}_{E_1}
(\omega _0,\ell ^{\mabfp}_{E_1}
(\Lambda _{E_1}))}
\lesssim
\nm {g_0}{L^{r_0}(Q_E)},
$$
which is the same as
\begin{equation}\label{Eq:f0fEst1}
\nm f{M^{\mabfp}_{E\times E_0',(\omega _0)}}
\lesssim
\nm {g_0}{L^{r_0}(Q_E)}
\end{equation}
in view of Proposition \ref{Prop:WienerEquiv}.

\par

In order to estimate $h_{0,r_0}$ we apply 
\eqref{Eq:STFTQuasiPer}
to get
\begin{multline*}
|V_\Phi (Z_{E} f)(x+j,\xi +\iota ,\eta ,y)
\omega _0(x+j-y,\eta )|
\\[1ex]
=
|V_\Phi (Z_{E}f)(x,\xi ,\eta ,y-j)
\omega _0(x-y+ j,\eta )|,\quad
(j,\iota )\in \Lambda _{E\times E'}.
\end{multline*}
By first applying the $L^{\mabfp_1}_{E}(\rr {d})$
norm with respect to the $y$ variable and
then the $L^{\mabfp_2}_{E_0'}(\rr {d})$
norm with respect to the $\eta$ variable we get
$$
H(x+j,\xi +\iota )= H(x,\xi ),\qquad (j,\iota )\in \Lambda _{E\times E'}.
$$

\par

Hence, by applying the $L^{r_0}_{E}(Q_E)$ norm
on $h_{0,r_0}$ and using
H{\"o}lder's and Jensen's inequalities we get
\begin{multline}\label{Eq:h0Est1}
\nm {h_{0,r_0}}{L^{r_0}_{E}(Q_E)}
= \nm H{L^{r_0}_{E\times E'}(Q _{E\times E'})}
\lesssim
\nm H{L^\infty_{E\times E'}(Q _{E\times E'})}
\\[1ex]
=
\sup _{(j,\iota )\in \Lambda _{E\times E'}}
\left (
\nm H{L^\infty_{E\times E'}((j,\iota )+Q _{E\times E'})}
\right ),
\end{multline}
A combination of
\eqref{Eq:f0h0Est1}--\eqref{Eq:h0Est1}
and Proposition \ref{Prop:WienerEquiv}
now gives
\begin{equation}\label{Eq:ZakModulation2}
\nm {f}{M^{\mabfp}_{E\times E_0',(\omega _0)}}
\lesssim
\nm {Z_{E} f}{W^{\infty ,\mabfp}_{Z,E,E_0,(\omega )}},
\qquad f\in \Sigma _1'(\rr d).
\end{equation}

\par

In order to get the reversed estimate we again
apply the $L^{\mabfp _2}_{E_0'}$ norm on
\eqref{Eq:PerCoeffEquivHere1} with respect to
the $\eta$ variable
and use H{\"o}lder's inequality to get
\begin{multline}\label{Eq:f0h0Est2}
g_0(x) = \nm {F(x,\cdo )}{L^{\mabfp _2}_{E_0'}(\rr d)}
\asymp
\nm {G(x,\cdo )}
{L^{\infty ,\mabfp _2}
_{E'\times E_0'}(Q_{E'}
\times \rr d)}
\\[1ex]
\gtrsim
\nm {H(x,\cdo )}
{L^{\infty}_{E'}(Q_{E'})}
=
h_{0,\infty}(x).
\end{multline}
If
$$
g_2(x) = \nm {F_2(x,\cdo )}{\ell ^{\mabfp _2}_{E_0'}}
\quad \text{with}\quad
F_2(x,\iota ) = \nm {F(x,\cdo )}
{L^{\infty}_{E_0'}(\iota +Q _{E_0'})},
$$
then Jensen's inequality give
$g_0 \lesssim g_2$.

\par

By applying the $L^{\infty}_{E}
(Q_E)$ norm on the latter inequality
and using Jensen's inequality again we obtain
$$
\nm {a_{2,f}}{\ell ^{\mabfp}_{E_1}}\gtrsim \nm {g_0}{L^{\infty}(Q_E)},
\quad \text{where}\quad
a_{2,f}(j,\iota )
=
\nm {V_{\phi _1}f\cdot \omega _0}{L^{\infty}((j,\iota )+Q _{E_1})}.
$$
That is,
$$
\nm {V_{\phi _1}f}
{\sfW ^{\infty}_{E_1}(\omega _0,
\ell ^{\mabfp}_{E_1}(\Lambda _{E_1}))}
\gtrsim
\nm {g_0}{L^{r_0}(Q_E)},
$$
which is the same as
\begin{equation}\label{Eq:f0fEst2}
\nm f{M^{\mabfp}_{E\times E_0',(\omega _0)}}
\gtrsim
\nm {g_0}{L^{\infty}(Q_E)}
\end{equation}
in view of Proposition \ref{Prop:WienerEquiv}.

\par

By applying the
$L^{\infty}_{E}(Q_E)$ norm on
$h_{0,\infty}$ and using \eqref{Eq:h0Est1}
we get
\begin{equation}\label{Eq:h0Est2}
\nm {h_{0,\infty}}{L^{\infty}_{E}(Q_E)}
=
\nm H{L^{\infty}_{E\times E'}(Q _{E\times E'})}
\asymp
\nm {Z_{E} f}{W^{\infty ,\mabfp}_{Z,E,E_0,(\omega )}},
\end{equation}
where the last relation follows
from Proposition \ref{Prop:WienerEquiv}.
A combination of \eqref{Eq:f0h0Est2}, \eqref{Eq:f0fEst2}
and \eqref{Eq:h0Est2} now gives
\begin{equation}\label{Eq:ZakModulation3}
\nm {f}{M^{\mabfp}_{E\times E_0',(\omega _0)}}
\lesssim
\nm {Z_{E} f}{W^{\infty ,\mabfp}_{Z,E,E_0,(\omega )}},
\qquad f\in \Sigma _1'(\rr d),
\end{equation}
and the result in the case $\mabfr =\infty$ follows by combining
\eqref{Eq:ZakModulation3} with
\eqref{Eq:ZakModulation2}.

\par

For general $\mabfr \in (0,\infty ]^{2d}$, we notice that the echo-periodicity
\eqref{Eq:STFTQuasiPer} implies that $H_{2,F,\omega ,E,E_0,\mabfp}$ in
\eqref{Eq:STFTZakPartNorm2} is $E\times E'$ periodic.
The general case now follows from Proposition
\ref{Prop:WienerEquiv}, Theorem  \ref{Thm:ZakModulation}
and the previous observation. The details are left for the reader.
\end{proof}

\par

A consequence of the previous result is that
$W_{Z,E,E_0,(\omega )}^{\mabfr , \mabfp}(\rr {2d})$ is independent
of $\mabfr$ (also in topological sense), and for this reason we set
$$
W_{Z,E,E_0,(\omega )}^{\mabfp}
=
W_{Z,E,E_0,(\omega )}^{\mabfr , \mabfp}.
$$

\par

As a special case of the previous result we have the following.

\par

\begin{cor}\label{Cor:ZakModulation}
Let $E$ be an ordered basis in $\rr d$, $\Phi \in \Sigma _1(\rr {2d})
\setminus 0$ and let $p\in (0,\infty ]$. Then
\begin{align*}
\nm {f}{M^{p}}
&\asymp
\nm{(V_\Phi (Z_{E} f))}{L^p(Q _{E\times E'}
\times \rr {2d})},
\qquad f\in \Sigma _1'(\rr d).
\end{align*}
\end{cor}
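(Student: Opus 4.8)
The plan is to obtain Corollary~\ref{Cor:ZakModulation} as the diagonal special case of Theorem~\ref{Thm:ZakModulation2}, in which every Lebesgue exponent equals $p$ and every weight is trivial. Concretely, I would apply Theorem~\ref{Thm:ZakModulation2} with $\omega _0\equiv 1$ (so that $\omega (x,\xi ,\eta ,y)=\omega _0(x-y,\eta )\equiv 1$ as well), with $\mabfp =(p,\dots ,p)\in (0,\infty ]^{2d}$ and with $\mabfr =(p,\dots ,p)\in (0,\infty ]^{2d}$. For these choices the theorem yields
$$
\nm f{M^{\mabfp}_{E_1}}
\asymp
\nm {H_{f,1,E,E_0}}{L^{\mabfr}_{E\times E'}(\kappa (E\times E'))},
\qquad
H_{f,1,E,E_0}(x,\xi )
=
\nm {(V_\Phi (Z_Ef))(x,\xi ,\cdo )}{L^{\mabfp}_{E_2}(\rr {2d})},
$$
for every $f\in \Sigma _1'(\rr d)$, and it remains only to rewrite both sides as plain $L^p$-quantities.

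For the left-hand side I would invoke Remark~\ref{Rem:LebExpIdent} and the discussion after Definition~\ref{Def:ExtClassModSpaces}: when all $2d$ exponents coincide with $p$, the $E_1$-split Lebesgue space $L^{\mabfp}_{E_1}(\rr {2d})$ agrees with $L^p(\rr {2d})$ up to an equivalent quasi-norm (the iterated $L^p$-norms along the basis directions collapse, by Tonelli's theorem, into a single $L^p$-integral, and the linear change of variables $T_{E_1}$ only introduces a constant Jacobian factor); hence $M^{\mabfp}_{E_1}(\rr d)=M^p(\rr d)$ with equivalent quasi-norms. The right-hand side is treated the same way: the inner quantity satisfies $\nm {(V_\Phi (Z_Ef))(x,\xi ,\cdo )}{L^{\mabfp}_{E_2}(\rr {2d})}\asymp \nm {(V_\Phi (Z_Ef))(x,\xi ,\cdo )}{L^p(\rr {2d})}$, and $\nm {H_{f,1,E,E_0}}{L^{\mabfr}_{E\times E'}(\kappa (E\times E'))}\asymp \nm {H_{f,1,E,E_0}}{L^p(\kappa (E\times E'))}$; combining the two iterated $L^p$-norms by Tonelli's theorem (for $p<\infty$; for $p=\infty$ the iterated essential supremum over $\kappa (E\times E')$ and over $\rr {2d}$ is the essential supremum over the product set) gives
$$
\nm {H_{f,1,E,E_0}}{L^p(\kappa (E\times E'))}
=
\nm {V_\Phi (Z_Ef)}{L^p(\kappa (E\times E')\times \rr {2d})}.
$$
Chaining the three equivalences produces the claimed relation $\nm f{M^p}\asymp \nm {V_\Phi (Z_Ef)}{L^p(\kappa (E\times E')\times \rr {2d})}$; here the echo-periodicity \eqref{Eq:STFTQuasiPer} is exactly what guarantees that $\kappa (E\times E')$ is a fundamental domain for the lattice $\Lambda _E\times \Lambda _E'$ in the $(x,\xi )$-variables of $V_\Phi (Z_Ef)$, so that this is the natural rectangle to integrate over.

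The main obstacle is only one of bookkeeping rather than of substance: one must verify carefully that the mixed-norm structure attached to $M^{\mabfp}_{E_1}$ and that attached to the right-hand side of Theorem~\ref{Thm:ZakModulation2} genuinely degenerate to honest $L^p$-norms. This hinges precisely on the fact that, once all exponents equal $p$, the order in which the successive one-dimensional norms are taken becomes immaterial, so that neither the $E_1$-split nor the $E_2$-split ordering, nor the permutation $\mathcal R$ entering $E_2$, has any effect; no quasi-Banach subtlety is involved since at no point do we split and recombine norms with differing exponents.
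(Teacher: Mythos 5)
The paper offers no separate proof of Corollary \ref{Cor:ZakModulation} --- it simply declares it a special case of Theorem \ref{Thm:ZakModulation2} --- and your derivation (taking $\omega _0\equiv 1$, $\mabfp =\mabfr =(p,\dots ,p)$, and collapsing the split mixed norms to plain $L^p$-norms via Remark \ref{Rem:LebExpIdent} and Tonelli's theorem) is exactly the intended specialisation. The argument is correct and follows the paper's route.
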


\par

\section{Duality properties and some further characterizations
of quasi-peridic elements}\label{sec3}

\par

In this section we discuss various aspects concerning duality and characterizations
for quasi-periodic elements, as well as transitions of linear operators under the Zak
transform.  In Subsection \ref{subsec3.1} we show that the elements in
$\maclE ^{\sigma ,s}_{Z,E}$ and $(\maclE ^{\sigma ,s}_{Z,E})'$ can be
completely characterized by estimates on their short-time Fourier transform.
Thereafter we use these characterizations in Subsection \ref{subsec3.2} to show
that the $L^2_{Z,E}$ form on $\maclE ^{\sigma ,s}_{Z,E}$
is uniquely extendable to a continuous sesqui-linear form on
$(\maclE ^{\sigma ,s}_{Z,E})' \times \maclE ^{\sigma ,s}_{Z,E}$,
and that the dual of $\maclE ^{\sigma ,s}_{Z,E}$ can be identified
by $(\maclE ^{\sigma ,s}_{Z,E})'$ through this form. We conclude
the section by showing in which ways linear operators are transformed by the
Zak transform (cf. Subsection \ref{subsec3.3}).

\par

\subsection{Characterizations of quasi-periodic elements via estimates on
their short-time Fourier transform}\label{subsec3.1}

\par

The following results are analogous to Propositions 2.7 and 2.8
in \cite{ToNa} concerning characterizations of periodic elements in Gelfand-Shilov
distribution spaces, and to Proposition \ref{stftGelfand2}. Here and in what
follows we set
\begin{equation}\label{Eq:OmegaEDef}
\Omega _E=Q _{E\times E'}\times \rr {2d}\subseteq \rr {4d}
\end{equation}
when $E$ is an ordered basis of $\rr d$.

\par

\begin{prop}\label{Prop:QuasiPerNonDistSTFTChar}
Let $s,\sigma >0$, $E$ be an ordered basis of $\rr d$,
$\Omega _E$ be as in \eqref{Eq:OmegaEDef},
$F$ be a quasi-periodic
Gelfand-Shilov distribution with respect to $E$ and let
$\phi \in \maclS _{s,\sigma}^{\sigma ,s}(\rr {2d})\setminus 0$
($\phi \in \Sigma _{s,\sigma}^{\sigma ,s}(\rr {2d})\setminus 0$).
Then the following conditions are equivalent:
\begin{enumerate}
\item[(1)] $F\in \maclE ^{\sigma ,s}_{Z,E}(\rr {2d})$ ($F\in
\maclE ^{\sigma ,s;0}_{Z,E}(\rr {2d})$);

\vrum

\item[(2)] for some $r>0$ (for every $r>0$), it holds
\begin{equation}\label{Eq:QuasiPerDistSTFTChar1}
|(V_\phi F)(x,\xi ,\eta ,y)|
\lesssim
e^{-r(|\eta |^{\frac 1\sigma}+|y|^{\frac 1s})},
\quad 
x\in Q_E,\ \xi ,\eta ,y \in \rr {d}\text ;
\end{equation}

\vrum

\item[(3)] for some $r>0$ (for every $r>0$), it holds
\begin{equation}\label{Eq:QuasiPerDistSTFTChar2}
|(V_\phi F)(x,\xi ,\eta ,y)|
\lesssim
e^{-r(|\eta |^{\frac 1\sigma}+|x-y|^{\frac 1s})},
\quad 
x,\xi ,\eta ,y \in \rr {d}.
\end{equation}
\end{enumerate}
\end{prop}

\par

\begin{prop}\label{Prop:QuasiPerDistSTFTChar}
Let $s,\sigma >0$, $E$ be an ordered basis of $\rr d$,
$\Omega _E$ be as in \eqref{Eq:OmegaEDef},
$F$ be a quasi-periodic
Gelfand-Shilov distribution with respect to $E$ and let
$\phi \in \maclS _{s,\sigma}^{\sigma ,s}(\rr {2d})\setminus 0$
($\phi \in \Sigma _{s,\sigma}^{\sigma ,s}(\rr {2d})\setminus 0$).
Then the following conditions are equivalent:
\begin{enumerate}
\item[(1)] $F\in (\maclE ^{\sigma ,s}_{Z,E})'(\rr {2d})$ ($F\in
(\maclE ^{\sigma ,s;0}_{Z,E})'(\rr {2d})$);

\vrum

\item[(2)] for every $r>0$ (for some $r>0$), it holds
\begin{equation}\label{Eq:QuasiPerDistSTFTChar3}
|(V_\phi F)(x,\xi ,\eta ,y)|
\lesssim
e^{r(|\eta |^{\frac 1\sigma}+|y|^{\frac 1s})},
\quad
x\in Q_E,\ \xi ,\eta ,y \in \rr {d}\text ;
\end{equation}

\vrum

\item[(3)] for every $r>0$ (for some $r>0$), it holds
\begin{equation}\label{Eq:QuasiPerDistSTFTChar4}
|(V_\phi F)(x,\xi ,\eta ,y)|
\lesssim
e^{r(|\eta |^{\frac 1\sigma}+|x-y|^{\frac 1s})},
\quad 
x,\xi ,\eta ,y \in \rr {d}.
\end{equation}
\end{enumerate}
\end{prop}

\par

We only prove Proposition \ref{Prop:QuasiPerDistSTFTChar}, and then only in
the Roumieu case. The Beurling case of Proposition \ref{Prop:QuasiPerDistSTFTChar}
as well as Proposition \ref{Prop:QuasiPerNonDistSTFTChar} follow by similar
arguments and are left for the reader.

\par

\begin{proof}
Since it is obvious that (3) implies (2), the result follows if we prove that (2) implies (1) and
(1) implies (3).

\par

Suppose \eqref{Eq:QuasiPerDistSTFTChar3} holds for every $r>0$, and let
$k=k_x\in \Lambda _E$ and $\kappa =\kappa _\xi \in \Lambda _E'$ be
chosen such that $x_0=x-k\in Q_E$ and $\xi -\kappa \in Q_{E'}$, when
$x,\xi \in \rr d$ are given. Then \eqref{Eq:STFTQuasiPer}
and \eqref{Eq:QuasiPerDistSTFTChar3} give
\begin{multline*}
|V_\phi F(x,\xi ,\eta ,y)| = |V_\phi F(x_0+k,\xi _0+\kappa ,\eta ,y)|
=
|V_\phi F(x_0,\xi _0,\eta ,y-k)|
\\[1ex]
\lesssim
e^{r(|\eta |^{\frac 1\sigma}+|y-k|^{\frac 1s})}
\lesssim
e^{Cr(|k|^{\frac 1s} +|\eta |^{\frac 1\sigma}+|y|^{\frac 1s})}
\\[1ex]
\asymp
e^{Cr(|x|^{\frac 1s} +|\eta |^{\frac 1\sigma}+|y|^{\frac 1s})}
\le
e^{Cr(|x|^{\frac 1s} +|\xi |^{\frac 1\sigma}+|\eta |^{\frac 1\sigma}+|y|^{\frac 1s})},
\end{multline*}
for every $r>0$. By Proposition Proposition \ref{stftGelfand2}, it now follows that
$F\in (\maclE ^{\sigma ,s}_{Z,E})'(\rr {2d})$, and we have proved that (2)
implies (1).

\par

It remains to prove that (1) implies (3). Suppose that (1) holds, let $x,\xi ,\eta ,y\in \rr d$
be arbitrary, and choose $x_0\in Q_E$, $\xi _0\in Q_{E'}$, $k\in \Lambda _E$
and $\kappa \in \Lambda _E'$ such that
$$
x=x_0+k
\quad \text{and}\quad
\xi = \xi _0+\kappa .
$$
Then (1) and Proposition \ref{stftGelfand2} give
\begin{multline*}
|(V_\phi F)(x,\xi ,\eta ,y)|
=
|(V_\phi F)(x_0+k,\xi _0+\kappa ,\eta ,y)|
=
|(V_\phi F)(x_0,\xi _0,\eta ,y-k)|
\\[1ex]
\lesssim
e^{r(|x_0|^{\frac 1s}+|\xi _0|^{\frac 1\sigma} +|\eta |^{\frac 1\sigma}+|y-k|^{\frac 1s})}
\asymp
e^{r(|\eta |^{\frac 1\sigma}+|x-y|^{\frac 1s})},
\end{multline*}
and we have proved that (1) implies (3).
\end{proof}

\par

\subsection{Duality properties of Gevrey type
quasi-period elements}\label{subsec3.2}

\par

We shall next use the previous characterizations in
Propositions \ref{Prop:QuasiPerNonDistSTFTChar} and
\ref{Prop:QuasiPerDistSTFTChar} to show that the form
\eqref{Eq:QuasiPerScalarProd} can be written as
\begin{equation}\label{Eq:QuasiPerForm}
(F,G)_{Z,E}\equiv (|Q_{E'}|\nm \phi{L^2}^2)^{-1}
\cdot (V_\phi F ,V_\phi G)_{L^2(\Omega _E)},
\end{equation}
when $F,G\in L^2_{Z,E}(\rr {2d})$.
Here $\phi \in \maclS _{s,\sigma}^{\sigma ,s}(\rr {2d})\setminus 0$
is fixed and $\Omega _E$ is given by \eqref{Eq:OmegaEDef}.
We use this identity to extend the definition of this form to permit
$$
F\in \maclE ^{\sigma ,s}_{Z,E}(\rr {2d})
\quad \text{and}\quad
G\in (\maclE ^{\sigma ,s}_{Z,E})'(\rr {2d}).
$$
We also show that the dual of $\maclE ^{\sigma ,s}_{Z,E}(\rr {2d})$
is equal to $(\maclE ^{\sigma ,s}_{Z,E})'(\rr {2d})$ through this form,
and similarly when $\maclE ^{\sigma ,s}_{Z,E}$ are replaced by
$\maclE ^{\sigma ,s;0}_{Z,E}$ at each occurrence.

\par

\begin{rem}\label{Rem:FormSense}
By Propositions \ref{Prop:QuasiPerNonDistSTFTChar} and
\ref{Prop:QuasiPerDistSTFTChar} it follows that for $V_\phi F$ and $V_\phi G$ in
\eqref{Eq:QuasiPerForm} we have
$$
V_\phi F (x,\xi ,\eta ,y)\overline{V_\phi G(x,\xi ,\eta ,y)}
\in L^1(\Omega _E).
$$
Hence the right-hand side of
\begin{multline}\tag*{(\ref{Eq:QuasiPerForm})$'$}
(F,G)_{Z,E}
\\[1ex]
=
(|Q_{E'}|\cdot \nm \phi{L^2}^2)^{-1}
\iiiint _{\Omega _E}
V_\phi F (x,\xi ,\eta ,y)\overline{V_\phi G(x,\xi ,\eta ,y)}\, dxd\xi d\eta dy
\end{multline}
makes sense and we may evaluate the integrals with respect to $x,\xi ,\eta ,y$ in any order.
It also follows that the map $(F,G)\mapsto (F,G)_{Z,E}$
defines a continuous map from $\maclE ^{\sigma ,s}_{Z,E}(\rr {2d})\times
(\maclE ^{\sigma ,s}_{Z,E})'(\rr {2d})$ to $\mathbf C$.
\end{rem}

\par

%

\begin{thm}\label{Thm:DualQuasiPer}
Let $(\cdo ,\cdo )_{Z,E}$ be given by \eqref{Eq:QuasiPerForm}.
Then the following is true:
\begin{enumerate}
\item $(\cdo ,\cdo )_{Z,E}$ on $\maclE ^{\sigma ,s}_{Z,E}(\rr {2d})\times
\maclE ^{\sigma ,s}_{Z,E}(\rr {2d})$ is uniquely extendable to a continuous sesqui-linear
map from $L^2_{Z,E}(\rr {2d})\times L^2_{Z,E}(\rr {2d})$ and from
$(\maclE ^{\sigma ,s}_{Z,E})'(\rr {2d})\times \maclE ^{\sigma ,s}_{Z,E}(\rr {2d})$ to
$\mathbf C$, and
\begin{equation}\label{Eq:ScalProdLink}
(F,G)_{Z,E} = |Q_{E'}|^{-1} (F,G)_{L^2(Q _{E\times E'})},
\qquad F,G\in L^2_{Z,E}(\rr {2d})\text ;
\end{equation}

\vrum

\item if $f\in (\maclS _s^\sigma )'(\rr d)$, $f\in \maclS _s^\sigma (\rr d)$, $F=Z_Ef$
and $G=Z_Eg$, then $(F,G)_{Z,E} = (f,g)_{L^2(\rr d)}$;

\vrum

\item the dual of $\maclE ^{\sigma ,s}_{Z,E}(\rr {2d})$ is equal to
$(\maclE ^{\sigma ,s}_{Z,E})'(\rr {2d})$ through the form $(\cdo ,\cdo )_{Z,E}$;
\end{enumerate}
The same holds true with $\maclE ^{\sigma ,s;0}_{Z,E}$
and $\Sigma _s^\sigma$ in place of $\maclE ^{\sigma ,s}_{Z,E}$
respective $\maclS _s^\sigma$ at each occurrence.
\end{thm}

\par

\begin{proof}
By Proposition \ref{Prop:ZakTransfBasicMaps},
Theorems \ref{Thm:ZakTestFunctions}, \ref{Thm:ZakDist}, and the facts that
$\maclS _s^\sigma (\rr d)$ is dense in $L^2(\rr d)$ and
the dual of $\maclS _s^\sigma (\rr d)$ equals $(\maclS _s^\sigma )'(\rr d)$ through
the form $(\cdo ,\cdo )_{L^2(\rr d)}$, it suffices to prove (2).

\par

Let
\begin{gather*}
f\in (\maclS _{s}^\sigma )'(\rr d),\quad g\in \maclS _{s}^\sigma (\rr d),\quad
\phi \in \maclS _{s,\sigma}^{\sigma ,s}(\rr {2d})\setminus 0,
\\[1ex]
\psi (x,y) = \mascF ^{-1}(\phi (x,\cdo ))(y),
\quad \text{and}\quad \psi _y(x)=\psi (x,y).
\end{gather*}
Then it follows by straight-forward computations that
\begin{alignat*}{2}
V_\phi (Z_Ef)(x,\xi ,\eta ,y)
&=
e^{-i\scal y\xi} F_{X}(\xi ),
& \qquad
F_{X}(\xi ) &=
\sum _{j\in \Lambda _E}
c_{X}(f,j)e^{i\scal {j}\xi}
\intertext{and}
V_\phi (Z_Eg)(x,\xi ,\eta ,y)
&=
e^{-i\scal y\xi} G_{X}(\xi ),
& \qquad
G_{X}(\xi ) &=
\sum _{j\in \Lambda _E}
c_{X}(g,j)e^{i\scal {j}\xi}
\end{alignat*}
where $X=(x,\eta ,y)\in Q_E\times \rr {2d}$,
and the Fourier coefficients $c_{X}(f,j)$ and $c_{X}(g,j)$ are given by
\begin{align}
c_{X}(f,j) &= (V_{\psi _{y-j}}f)(x-j,\eta )e^{-i\scal j\eta}
\label{Eq:DualFourCoeff1}
\intertext{and}
c_{X}(g,j) &= (V_{\psi _{y-j}}g)(x-j,\eta )e^{-i\scal j\eta}.
\label{Eq:DualFourCoeff2}
\end{align}
Since the short-time Fourier transforms $V_\phi (Z_Ef)$ and $V_\phi (Z_Eg)$
are smooth, it follows that $\xi \mapsto F_{X}(\xi )$ and $\xi \mapsto G_{X}(\xi )$
are smooth periodic functions for
every $X$. Hence the Fourier coefficients in \eqref{Eq:DualFourCoeff1}
and \eqref{Eq:DualFourCoeff2} satisfies
\begin{equation}\label{Eq:DualFourCoeffEst}
|c_{X}(f,j)| \lesssim \eabs j^{-N}
\quad \text{and}\quad
|c_{X}(g,j)| \lesssim \eabs j^{-N}
\end{equation}
for every $N\ge 0$, when $X\in Q_E\times \rr {2d}$ is fixed.
By integrating 
\begin{multline*}
V_\phi (Z_Ef)(x,\xi ,\eta ,y)\overline{V_\phi (Z_Eg)(x,\xi ,\eta ,y)}
\\[1ex]
=
\left (
\sum _{j\in \Lambda _E}
c_{X}(f,j)e^{i\scal {j}\xi}
\right )
\overline{
\left (
\sum _{j\in \Lambda _E}
c_{X}(g,j)e^{i\scal {j}\xi}
\right )
},
\end{multline*}
with respect to the $\xi$ variable and using \eqref{Eq:DualFourCoeffEst}, we obtain
\begin{multline*}
\int _{Q_{E'}}
V_\phi (Z_Ef)(x,\xi ,\eta ,y)\cdot 
\overline{V_\phi (Z_Eg)(x,\xi ,\eta ,y)}\, d\xi
\\[1ex]
=
\int _{Q_{E'}}
\left (
\sum _{j\in \Lambda _E}
c_{X}(f,j) e^{i\scal {j}\xi}
\right )
\overline {
\left (
\sum _{j\in \Lambda _E}
c_{X}(g,j) e^{i\scal {j}\xi}
\right )
}
\, d\xi
\\[1ex]
=
|Q_{E'}|
\sum _{j\in \Lambda _E}
c_{X}(f,j)\overline {c_{X}(g,j)}
\\[1ex]
=
|Q_{E'}|
\sum _{j\in \Lambda _E}
(V_{\psi _{y-j}}f)(x-j,\eta )\overline{(V_{\psi _{y-j}}g)(x-j,\eta )}.
\end{multline*}
Hence, integrating with respect to $x,\eta ,y$, using Moyal's
identity and Remark \ref{Rem:FormSense}, we obtain
\begin{multline}\label{Eq:LastFormEvaluation}
\nm \phi{L^2(\rr d)}^2\cdot (F,G)_{Z,E}
\\[1ex]
=
\iint _{Q_E\times \rr d}
\left (
\sum _{j\in \Lambda _E}
\int _{\rr d} 
(V_{\psi _{y-j}}f)(x-j,\eta )\overline{(V_{\psi _{y-j}}g)(x-j,\eta )}
\, dy
\right )
\, dxd\eta
\\[1ex]
=
\iint _{Q_E\times \rr d}
\left (
\sum _{j\in \Lambda _E}
\int _{\rr d} 
(V_{\psi _{y}}f)(x-j,\eta )\overline{(V_{\psi _{y}}g)(x-j,\eta )}
\, dy
\right )
\, dxd\eta
\\[1ex]
=
\int _{\rr d} 
\left (
\iint _{\rr {2d}}
(V_{\psi _{y}}f)(x,\eta )\overline{(V_{\psi _{y}}g)(x,\eta )}
\, dxd\eta
\right )
\, dy
\\[1ex]
=
\int _{\rr d} 
\left (
(f,g)_{L^2(\rr d)}\nm {\psi _y}{L^2(\rr d)}^2
\right )
\, dy
= \nm \phi{L^2(\rr {2d})}^2(f,g)_{L^2(\rr d)},
\end{multline}
which gives (2). Here we observe that the estimates in Proposition \ref{stftGelfand2}
implies that the involved expressions in \eqref{Eq:LastFormEvaluation}
possess suitable $L^1$ properties, which allow us to swap the orders of summations
and integrations. This gives the result.
\end{proof}

\par

\subsection{Duality properties of Banach spaces of
quasi-periodic elements}\label{subsec3.3}

\par

In the following we use the links Theorem \ref{Thm:ZakModulation}
and \eqref{Eq:ScalProdLink} to carry over duality
properties of Lebesgue and modulation spaces to quasi-periodic
elements in Lebesgue and Wiener type spaces. Here $p'\in [1,\infty ]$
denotes the conjugate exponent to $p\in [1,\infty ]$, i.{\,}e. $p$
and $p'$ should satisfy $\frac 1p+\frac 1{p'}=1$. Furthermore,
we let $\mabfp '=(p_1',\dots ,p_d')$ when $\mabfp =(p_1,\dots ,p_d)$.

\par

\begin{thm}\label{Thm:DualQuasiPer2}
Let  $\mabfp \in [1,\infty ]^{2d}$, $E$ and $E_0$
be ordered bases of $\rr d$, $\omega \in \mascP _E(\rr {4d})$
and $\omega _0\in \mascP _E(\rr {2d})$ be such that
\eqref{Eq:ZakModulationWeight} holds. Then the following is true:
\begin{enumerate}
\item the map $(F,G)\mapsto (F,G)_{Z,E}$ from $\maclE _{Z,E}^{1,1;0}(\rr {2d})
\times \maclE _{Z,E}^{1,1;0}(\rr {2d})$ to $\mathbf C$ is uniquely extendable
to a continuous mapping from $W^{\mabfp}_{Z,E,E_0,(\omega )}(\rr {2d})
\times W^{\mabfp '}_{Z,E,E_0,(1/\omega )}(\rr {2d})$
to $\mathbf C$. If in addition $\max (\mabfp )<\infty$, then the dual of
$W^{\mabfp}_{Z,E,E_0,(\omega )}(\rr {2d})$ can be identified
with $W^{\mabfp '}_{Z,E,E_0,(1/\omega )}(\rr {2d})$ through the form
$(\cdo ,\cdo )_{Z,E}$;

\vrum

\item if $f\in M^{\mabfp}_{E\times E_0',(\omega _0)}(\rr d)$,
$g\in M^{\mabfp '}_{E\times E_0',(1/\omega _0)}(\rr d)$, then
$F=Z_Ef\in W^{\mabfp}_{Z,E,E_0,(\omega )}(\rr {2d})$,
$G=Z_Eg \in W^{\mabfp '}_{Z,E,E_0,(\omega )}(\rr {2d})$ and
$(F,G)_{Z,E}=(f,g)_{L^2(\rr d)}$.
\end{enumerate}
\end{thm}

\par

\begin{proof}
In order to prove (1) we first observe that if
$F\in W^{\mabfp}_{Z,E,E_0,(\omega )}(\rr {2d})$ and
$G\in W^{\mabfp '}_{Z,E,E_0,(1/\omega )}(\rr {2d})$, then
the integrand on the right-hand side of
\eqref{Eq:QuasiPerForm}$'$ belongs to $L^1(\Omega _E)$, in view of
Theorem \ref{Thm:ZakModulation}. This proves the extension
assertions for $(\cdo ,\cdo )_{Z,E}$ on
$W^{\mabfp}_{Z,E,E_0,(\omega )}(\rr {2d})
\times
W^{\mabfp '}_{Z,E,E_0,(1/\omega )}(\rr {2d})$. The duality assertion
will follow after we have proved (3), using the fact that the dual of
$M^{\mabfp}_{E\times E_0',(\omega _0)}(\rr d)$ is equal to
$M^{\mabfp '}_{E\times E_0',(1/\omega _0)}(\rr d)$
(see \cite[Theorem 11.3.6]{Gc2}).

\par

It remains to prove (2). Let $f\in M^{\mabfp}_{E\times E_0',(\omega _0)}(\rr d)$,
$g\in M^{\mabfp '}_{E\times E_0',(1/\omega _0)}(\rr d)$, $F=Z_Ef$
and $G=Z_Eg$. Then Theorem \ref{Thm:ZakModulation} shows that
$F\in W^{\mabfp}_{Z,E,E_0,(\omega )}(\rr {2d})$
and
$G\in W^{\mabfp '}_{Z,E,E_0,(1/\omega )}(\rr {2d})$.
By straight-forward computations it follows that
\eqref{Eq:LastFormEvaluation} holds for our choices of $f$, $g$, $F$ and
$G$. This gives (2).
\end{proof}

\par

\begin{rem}
Let $p\in [1,\infty ]$ and $E$ be an ordered basis of $\rr d$.
Then recall that we may identify $L^p_{Z,E}(\rr {2d})$ with
$L^p(Q_{E\times E'})$. Hence, by \eqref{Eq:QuasiPerScalarProd}
and standard duality properties for Lebesgue spaces show that
the map $(F,G)\mapsto (F,G)_{Z,E}$ from $C^\infty _{Z,E}(\rr {2d})
\times C^\infty _{Z,E}(\rr {2d})$ to $\mathbf C$ is uniquely extendable
to a continuous mapping from $L^p_{Z,E}(\rr {2d})\times L^{p'}_{Z,E}(\rr {2d})$
to $\mathbf C$. If in addition $p<\infty$, then the dual of
$L^p_{Z,E}(\rr {2d})$ can be identified with $L^{p'}_{Z,E}(\rr {2d})$
through the form $(\cdo ,\cdo )_{Z,E}$;
\end{rem}

\par

\begin{rem}
Let $E_0$ be an ordered basis of $\rr d$, $E=E_0\times E_0'$,
$\omega _0\in \mascP _E(\rr d)$, $\omega (x,\xi )=\omega _0(\xi )$,
$x,\xi \in \rr d$ and $\mabfq \in [1,\infty )^d$.
For periodic functions and distributions,
Theorem \ref{Thm:DualQuasiPer2} (2) together with
Proposition \ref{Prop:PerMod} correspond to 
\cite[Theorem 3.2]{ToNa}, which among others asserts that
the dual of $\maclE _{E}(\omega _0,\ell _{E_0'}^{\mabfq}(\Lambda _{E_0'}))$
is equal to $\maclE _{E}(1/\omega _0,\ell _{E_0'}^{\mabfq '}(\Lambda _{E_0'}))$
through a unique extension of the $L^2(Q_{E_0})$ form on
$\maclE _E^0(\rr d)\times \maclE _E^0(\rr d)$.

\par

Here we observe the misprint in
(2) in \cite[Theorem 3.2]{ToNa}, where it stays
$\maclE ^E(1/\omega ,\ell ^{\mabfq}_{\kappa (E')})$
instead of
$\maclE ^E(1/\omega ,\ell ^{\mabfq '}_{\kappa (E')})$
\end{rem}

\par

\section{Transitions of operators under the Zak
transform}\label{sec4}

\par

In this section we show how linear operators are transformed
by the Zak transform into corresponding operators
acting on quasi-periodic functions or distributions. We also present
a condition on linear operators which is both necessary and sufficient 
in order for these operators should map quasi-periodic elements
into quasi-periodic elements.

\par

Our results are described in the following two theorems, which explain
how linear operators acting on functions and distributions on $\rr d$ are
transfered by the Zak transform.
Especially the operator representation
\begin{multline}\label{Eq:ReprUDef}
(U_{y,\eta }F)(x,\xi )= e^{-i\scal y{\xi +\eta}}F(x+y,\xi +\eta )
\\[1ex]
\text{when}\quad 
F\in (\maclS _{s,\sigma}^{\sigma ,s} )'(\rr {2d}),\ y,\eta \in \rr d
\end{multline}
is important for characterizing such operators.

\par

\begin{thm}\label{Thm:ZakTransOpChar}
Let $s,\sigma >0$, $U_{y,\eta}$ be as in \eqref{Eq:ReprUDef} and
$T$ be a linear operator from $\maclS _s^\sigma (\rr d)$ to
$(\maclS _s^\sigma )'(\rr d)$ with kernel $K\in (\maclS _s^\sigma )'(\rr {2d})$. Then
there is a unique linear and continuous operator $T_Z$ from $\maclE ^{\sigma ,s}_{Z,E}(\rr {2d})$
to $(\maclE ^{\sigma ,s}_{Z,E})'(\rr {2d})$
such that $Z_E\circ T = T_Z\circ Z_E$, for every ordered basis $E$ of $\rr d$.
The kernel of $T_Z$ is given by
\begin{align}
K_Z(x,\xi ,y,\eta ) &= (2\pi )^{-\frac d2}\mascF (K(x-\cdo ,y-\cdo ))(\eta -\xi )
\in (\maclS _{\boldsymbol s}^{\boldsymbol \sigma} )'(\rr {4d}),
\label{Eq:ZakTransOpChar1}
\intertext{where $\boldsymbol s = (s,\sigma ,s,\sigma )$ and
$\boldsymbol \sigma = (\sigma ,s,\sigma ,s)$, and $T_Z$ fulfills}
T_Z\circ U_{y,\eta} &= U_{y,\eta}\circ T_Z, \qquad y,\eta \in \rr d.
\label{Eq:ZakTransOpChar2}
\end{align}
The same holds true with $\maclE _{Z,E}^{\sigma ,s;0}$ and
$(\maclE _{Z,E}^{\sigma ,s;0})'$, or with $C^\infty _{Z,E}$ and
$\mascS _{Z,E}'$ in place of $\maclE _{Z,E}^{\sigma ,s}$ and
$(\maclE _{Z,E}^{\sigma ,s})'$, respectively at each occurrence.
\end{thm}

\par

The converse of Theorem \ref{Thm:ZakTransOpChar} is the following

\par

\begin{thm}\label{Thm:ZakTransOpChar2}
Let $s,\sigma >0$, $T_Z$ be a linear and continuous map from
$\maclS _{s,\sigma}^{\sigma ,s} (\rr {2d})$ to
$(\maclS _{s,\sigma}^{\sigma ,s} )'(\rr {2d})$ with kernel $K_Z$
and such that
\eqref{Eq:ZakTransOpChar2} holds. Then the following is true:
\begin{enumerate}
\item there is a unique
$K\in (\maclS _s^\sigma )'(\rr {2d})$ such that
\eqref{Eq:ZakTransOpChar1} holds;

\vrum

\item if $E$ is an ordered basis of $\rr d$,
then $T_Z$ is uniquely extendable to a linear and
continuous operator
from $\maclE ^{\sigma ,s}_{Z,E}(\rr {2d})$
to $(\maclE ^{\sigma ,s}_{Z,E})'(\rr {2d})$;

\vrum

\item If $T$ is the linear operator with kernel $K$ in {\rm{(1)}} and
$E$ is an ordered basis of $\rr d$, then $Z_E\circ T = T_Z\circ Z_E$.
\end{enumerate}
The same holds true with $\Sigma _{s,\sigma}^{\sigma ,s}$,
$\maclE _{Z,E}^{\sigma ,s;0}$ and
$(\maclE _{Z,E}^{\sigma ,s;0})'$, or with $\mascS$, $C^\infty _{Z,E}$ and
$\mascS _{Z,E}'$ in place of $\maclS _{s,\sigma}^{\sigma ,s}$,
$\maclE _{Z,E}^{\sigma ,s}$ and
$(\maclE _{Z,E}^{\sigma ,s})'$, respectively at each occurrence.
\end{thm}

\par

\begin{proof}[Proof of Theorem \ref{Thm:ZakTransOpChar}]
We only prove the result when the involved spaces are given by
$\maclE _{Z,E}^{\sigma ,s}$ or $(\maclE _{Z,E}^{\sigma ,s})'$. The other cases
follow by similar arguments and are left for the reader.

\par

First suppose that $K_Z$ is given by
\eqref{Eq:ZakTransOpChar1}. Since pull-back results of the type
\cite[Theorem 6.1]{Ho1} for usual distribution, hold true
for Gelfand-Shilov distributions concerning linear pull-backs,
it follows that $K_1(x,y,z)\equiv K(x-z,y-z)$
belongs to $(\maclS _s^\sigma )'(\rr {3d})$ when $K\in (\maclS _s^\sigma )'(\rr {2d})$.
By Fourier transformation, it follows that $K_2(x,y,\xi ) \equiv \mascF (K_1(x,y,\cdo ))(\xi )$
$(\maclS _{s,s,\sigma}^{\sigma ,\sigma ,s})'(\rr {3d})$. By similar pull-back results it now
follows that
$$
K_3(x,y,\xi ,\eta ) \equiv K_2(x,y,\eta -\xi )\in
(\maclS _{s,s,\sigma ,\sigma}^{\sigma ,\sigma ,s,s})'(\rr {4d}).
$$
Since $K_Z(x,\xi ,y,\eta )=(2\pi )^{-\frac d2}K_3(x,y,\xi ,\eta )$, we get
$$
K_Z\in (\maclS _{s,\sigma ,s,\sigma}^{\sigma ,s,\sigma ,s})'(\rr {4d}),
$$
and the last property in \eqref{Eq:ZakTransOpChar1} follows.

\par

Let $E$ be an ordered basis of $\rr d$. We only prove $Z_E\circ T = T_Z\circ Z_E$
when $K\in \mascS (\rr {2d})$. The general result follows by similar arguments and
is left for the reader. We have
$$
(2\pi )^{\frac d2}\mascF ^{-1}(K_Z(x,\xi ,y,\cdo ))(z) = K(x-z,y-z)e^{i\scal z\xi}.
$$
This gives
\begin{multline*}
T_Z(Z_Ef)(x,\xi )
=
\sum _{j\in \Lambda _E}\iint _{\rr {2d}}K_Z(x,\xi ,y,\eta )
f(y-j)e^{i\scal j\eta}\, dyd\eta
\\[1ex]
=
\sum _{j\in \Lambda _E}(2\pi )^{\frac d2}\int _{\rr {d}}
\mascF ^{-1}(K_Z(x,\xi ,y,\cdo ))(j)f(y-j)\, dy
\\[1ex]
=
\sum _{j\in \Lambda _E}\int _{\rr {d}}
K(x-j,y-j)e^{i\scal j\xi}f(y-j)\, dy
= 
\sum _{j\in \Lambda _E}\int _{\rr {d}}
K(x-j,y)e^{i\scal j\xi}f(y)\, dy
\\[1ex]
=
\sum _{j\in \Lambda _E}(Tf)(x-j)e^{i\scal j\xi}
=Z_E(Tf)(x,\xi ).
\end{multline*}
This shows that $Z_E\circ T = T_Z\circ Z_E$.

\par

The continuity assertions of $T_Z$ now follows from the latter identity and
Theorem \ref{Thm:ZakDist}.
\end{proof}

\par

We need the following lemma for the proof of Theorem
\ref{Thm:ZakTransOpChar2}.

\par

\begin{lemma}\label{Lemma:ZakTransOpChar2}
Let $s,\sigma >0$ and $K\in (\maclS _s^\sigma )'(\rr {2d})$. Then
the following conditions are equivalent:
\begin{enumerate}
\item $K(\cdo +(z,z))=K$ for every $z\in \rr d$;

\vrum

\item there is a unique $K_0\in (\maclS _s^\sigma )'(\rr d)$ such that
$K(x,y) = K_0(x-y)$.
\end{enumerate}
The same hold true with $(\Sigma _s^\sigma )'$, $\mascS '$ or
$\mascD '$ in place of $(\maclS _s^\sigma )'$ at each occurrence.
\end{lemma}

\par

Lemma \ref{Lemma:ZakTransOpChar2} is at least implicitly
available in the literature, e.{\,}g. in \cite{Ho1}. In order to be
self-contained we give a proof in Appendix \ref{App:A}.

\par

\begin{proof}[Proof of Theorem \ref{Thm:ZakTransOpChar2}]
Again we only prove the result when the involved spaces are given by
$\maclE _{Z,E}^{\sigma ,s}$ or $(\maclE _{Z,E}^{\sigma ,s})'$. The other cases
follow by similar arguments and are left for the reader.

\par

The condition \eqref{Eq:ZakTransOpChar2} implies that
$$
K_Z(x,\xi ,y,\eta ) = e^{-i\scal z{\xi -\eta}}K_Z(x+z,\xi +\zeta ,y+z,\eta +\zeta ),
$$
for every $x,y,z,\xi ,\eta ,\zeta \in \rr d$. By Lemma \ref{Lemma:ZakTransOpChar2}
it follows that
$$
K_Z(x,\xi ,y,\eta ) = e^{i\scal y{\xi -\eta}} K_0(x-y,\xi -\eta )
$$
for some $K_0\in (\maclS _{s,\sigma}^{\sigma ,s})'(\rr {2d})$. It now follows that
$$
K(x,y) = (2\pi )^{\frac d2}(\mascF _2^{-1}K_0)(x-y,y)
$$
fullfils all required properties.
\end{proof}


\appendix

\section{}\label{App:A}

\par

In this appendix we present proofs of Proposition \ref{Prop:ZakOnLebesgue}
and Lemma.

\par

\begin{proof}[Proof of Proposition \ref{Prop:ZakOnLebesgue}]
First suppose that $p\in (0,1]$. Then
\begin{multline*}
\nm F{L^p_{Z,E}}^p = |Q_{E'}|^{-1}\iint _{Q _{E\times E'}}
\left | 
\sum _{j\in \Lambda _E}f(x-j)e^{i\scal j\xi}
\right | ^p\, dxd\xi
\\[1ex]
\le
|Q_{E'}|^{-1}\iint _{Q _{E\times E'}}
\left ( 
\sum _{j\in \Lambda _E}
|f(x-j)e^{i\scal j\xi}| ^p\right )
\, dxd\xi
\\[1ex]
=
\int _{Q_E}
\left ( 
\sum _{j\in \Lambda _E}
|f(x-j)| ^p\right )
\, dx = \nm f{L^p}^p,
\end{multline*}
and (1) follows for $p\in (0,1]$.

\par

Since the result is true for $p=2$ in view of Proposition
\ref{Prop:ZakTransfBasicMaps} below and Proposition
\ref{Prop:ZakTransfBasicMaps}, the result now follows
in the case $p\in [1,2]$ by interpolating the case $p=1$
above with the case $p=2$. This gives (1),  and (2) now
follows from (1) and duality.

\par

Finally, by the assumptions we have that $\sum _{j\in \Lambda _E}v(x+j)^{-1}$
is bounded. Hence, by the inversion formula \eqref{Eq:ZakTransfInv}
we obtain
\begin{multline*}
\nm {f\cdot v^{-1}}{L^1}
\asymp
\int _{\rr d} \left | \int _{Q_{E'}}F(x,\xi )\, d\xi \right |v(x)^{-1}\, dx
\\[1ex]
\le
\sum _{j\in \Lambda _E}
\int _{Q_E}  \left (\int _{Q_{E'}}|F(x+j,\xi )|\, d\xi \right )
v(x+j)^{-1}\, dx
\\[1ex]
=
\int _{Q_E}  \left (\int _{Q_{E'}}|F(x,\xi )|\, d\xi \right )
\left (\sum _{j\in \Lambda _E} v(x+j)^{-1}\right )\, dx
\asymp
\nm F{L^1_{Z,E}(\rr {2d})},
\end{multline*}
and (3) follows in the case $p=1$ by combining the previous estimate
with (1). Since (3) is true for $p=2$ in view of Proposition
\ref{Prop:ZakTransfBasicMaps}, it follows that it is true also for
$p\in [1,2]$ by interpolating between the cases $p=1$ and $p=2$.
For $p\in [2,\infty]$, (3) now follows from the case $p\in [2,\infty ]$
and duality.
\end{proof}

\par

\begin{proof}[Proof of Lemma \ref{Lemma:ZakTransOpChar2}]
We only prove the result when the involved spaces are of the forms
$(\maclS _s^\sigma )'(\rr d)$ and $(\maclS _s^\sigma )'(\rr {2d})$. The
other cases follow by similar arguments and are left for the reader.

\par

It is evident that (2) implies (1). Suppose that (1) is true. Then
$(x,y)\mapsto K(2^{-1}(x+y),2^{-1}(x-y))$ is an element in
$(\maclS _s^\sigma )'(\rr {2d})$ which is constant with respect to
the $x$ variable. Hence,
$$
K(2^{-1}(x+y),2^{-1}(x-y)) =(1\otimes K_0)(x,y)
$$
for some $K_0\in (\maclS _s^\sigma )'(\rr d)$. By taking
$2^{-1}(x+y)$ and $2^{-1}(x-y)$ as new variables, we obtain (2).
\end{proof}

\par

\end{document}